\newtheorem{theorem}{Theorem}[section]
\newtheorem{lemma}{Lemma}[section]
\newtheorem{remark}{Remark}[section]
\newtheorem{proposition}{Proposition}[section]
   \newtheoremstyle{example}{\topsep}{\topsep}%
     {}
     {}
     {\bfseries}
     {}
     {\newline}
     {\thmname{#1}\thmnumber{ #2}\thmnote{ #3}}
   \theoremstyle{example}
\newcommand{\langlerangle}[1]{\left<#1\right>}
\newcommand{\spnu}[2]{\langlerangle{#1}_{#2}}
\newcommand{\B}[1]{{\mathbf{B}}(#1)}
\newcommand{\D}{\mathcal{D}}
\newcommand{\Prob}{\mathbb{P}}
\newcommand{\C}{\mathbb{C}}
\newcommand{\Poly}{\mathcal{P}}
\newcommand{\E}{\mathbb{E}}
\newcommand{\e}{\mathbf{e}}
\newcommand{\R}{{\mathbb{R}}}
\newcommand{\J}{J_{\alpha}}
\newcommand{\N}{\mathbb{N}}
\newcommand{\calH}{\mathcal{H}}
\newcommand{\Fou}{\mathcal{F}}
\newcommand{\Filt}{\mathbf{F}}
\newcommand{\frakp}{\mathfrak{p}}
\newcommand{\frakq}{\mathfrak{q}}
\newcommand{\hatJ}{\widehat{\mathcal{J}}_{\alpha}}
\newcommand{\hatP}{\widehat{P}}
\newcommand{\hatX}{\widehat{X}}
\newcommand{\hatZ}{\widehat{Z}}
\newcommand{\hattau}{\hat{\tau}}
\newcommand{\hatLambda}{\widehat{\Lambda}_{\alpha}}
\newcommand{\hha}{\widehat{\mathcal{H}}_{\alpha}}
\newcommand{\ha}{{\mathcal{H}}_{\alpha}}
\newcommand{\Ran}{\mathrm{Ran}}
\newcommand{\Ker}{\mathrm{Ker}}
\newcommand{\rmdq}{{\rm{d}}_q}
\newcommand{\rmdtau}{{\rm{d}}_{\tau}}
\newcommand{\pa}{{\pi}_{\alpha}}
\newcommand{\calJ}{\mathcal{J}_{\alpha}}
\newcommand{\calX}{\mathcal{X}}
\newcommand{\bo}{{\rm{O}}}
\renewcommand{\so}{{\rm{o}}}
\newcommand{\rmH}{{\rm{H}}}
\newcommand{\Lp}{{\rm{L}}^2(\R_+)}
\newcommand{\Lr}{{\rm{L}}^2(\R)}
\newcommand{\Lm}{{\rm{L}}^2(m)}
\newcommand{\eag}{\overline{\e}_{\kappa,\eta}}
\newcommand{\Leag}{{\rm{L}}^2\left(\eag\right)}
\newcommand{\Lnu}{{\rm{L}}^2(\nu)}
\newcommand{\Cd}{{\rm{C}}}
\newcommand{\Bb}{{\rm{B}}_b(\R_+)}
\newcommand{\Co}{\Cd_0(\R_+)}
\newcommand{\Ci}{\Cd^{\infty}(\R_+)}
\newcommand{\Ctwo}{\Cd^2_0(\R_+)}
\newcommand{\Ctwor}{\Cd^2_0(\R)}
\newcommand{\adjLambda}{\widehat{\Lambda}_{\alpha}}
\newcommand{\M}{\mathcal{M}}
\newcommand{\La}{\Lambda_{\alpha}}
\newcommand{\Xa}{\mathbf{X}_{\alpha}}
\newcommand{\eqindist}{\,{\buildrel d \over =}\,}
\newcommand{\siminfy}{\,{\buildrel \infty \over \sim}\,}
\newcommand{\sipminfy}{\,{\buildrel \pm \infty \over \sim}\,}
\newcommand{\eqinfy}{\,{\buildrel \infty \over =}\,}
\newcommand{\eqpinfy}{\,{\buildrel \pm \infty \over =}\,}
\newcommand{\eqzero}{\,{\buildrel 0 \over =}\,}
\newcommand{\alp}{\kappa}
\numberwithin{equation}{section}
\author{P. Patie}\thanks{ This work was partially  supported by ARC IAPAS, a fund of the Communaut\'ee francaise de Belgique. The authors are indebted to an anonymous referee who pointed out  that one of the pathwise descriptions of the dual process was misleading in an earlier version of the manuscript and who has brought to our attention the reference \cite{Sanka-Phd}.}
\address{School of Operations Research and Information Engineering, Cornell University, Ithaca, NY 14853.}
\email{	pp396@orie.cornell.edu}
\author{Y. Zhao}
\address{School of Operations Research and Information Engineering, Cornell University, Ithaca, NY 14853.}
\email{	yz645@cornell.edu}
\title{Spectral decomposition of fractional operators and a reflected stable semigroup}
\begin{document}

	\begin{abstract}
		In this paper, we provide the spectral decomposition  in Hilbert space of the $\mathcal{C}_0$-semigroup $P$ and its adjoint $\hatP$ having as  generator,  respectively, the Caputo and the right-sided Riemann-Liouville fractional derivatives of index $1<\alpha<2$. These linear operators, which are non-local and non-self-adjoint, appear in many recent studies in applied mathematics and also arise as the infinitesimal generators of some substantial processes such as the reflected spectrally negative $\alpha$-stable process.   Our approach relies on intertwining relations that we establish  between these  semigroups and  the semigroup of a  Bessel type process whose generator is a self-adjoint second order differential operator. In particular, from this commutation relation, we characterize  the positive real axis as the continuous point spectrum of $P$ and provide a power series representation of the corresponding eigenfunctions. We also  identify the positive real axis as the residual spectrum of the adjoint operator $\hatP$ and elucidates its role in the spectral decomposition of these operators. By resorting to the concept of continuous frames, we proceed by investigating the domain of the spectral operators and derive two representations for the heat kernels of these  semigroups. As a by-product, we also obtain regularity properties for these latter and also for the solution of the associated Cauchy problem.
	\end{abstract}
\keywords{Fractional operators, continuous frames, non-self-adjoint integro-differential operators, Markov semigroups, Reflected stable processes, spectral theory
\\ \small\it 2010 Mathematical Subject Classification: 35R11, 47G20, 	60G52, 	42C15, 47D07, 58C40}
\maketitle
\section{Introduction}
Fractional calculus, in which derivatives and integrals of fractional order are defined and studied, is nearly as old as the classical calculus of integer orders. Ever since the first inquisition by L'Hopital and Leibniz in 1695, there has been an enormous amount of study on this topic for more than three centuries, with many mathematicians having suggested their own definitions that fit the concept of a non-integer order derivative. Among the most famous of these definitions are the Riemann-Liouville fractional derivative and the Caputo derivative, the latter being a reformulation of the former in order to use integer order initial conditions to solve fractional order differential equations. In this context, it is natural to consider the following Cauchy problem, for a smooth function $f$ on $x>0$,
\begin{equation} \label{eq:Cauchy_problem}
\begin{cases}
\frac{d}{dt}u(t,x) = \mathbf{D}_{\alpha} u(t,x)\\
u(0,x)=f(x),\\
\end{cases}
\end{equation}
where, for any $1<\alpha<2$, the linear operator $\mathbf{D}_{\alpha}$ is either the Caputo $\alpha$-fractional derivative
\begin{equation} \label{eq:Caputo}
\mathbf{D}_{\alpha}f(x) = {}^CD^{\alpha}_{+}f(x)=\int_0^x\frac{f^{([\alpha]+1)}(y)}{(x-y)^{\alpha-[\alpha]}}\frac{dy}{\Gamma([\alpha]+1-\alpha)},
\end{equation}
with, for  any $k=1,2,\ldots$,   $f^{(k)}(x)=\frac{d^{k}}{dx^k}f(x)$ stands for the $k$-th derivative of $f$, or, the right-sided Riemann-Liouville (RL) derivative
\begin{equation}
\mathbf{D}_{\alpha}f(x) =D^{\alpha}_- f(x)=\left(\frac{d}{dx}\right)^{[\alpha]+1}\int_x^{\infty}\frac{f(y)(y-x)^{[\alpha]-\alpha}}{\Gamma([\alpha]+1-\alpha)}dy,
\end{equation}
with $[\alpha]$ representing the integral part of $\alpha$. We point out that when $\alpha= 2$, in both cases, $\mathbf{D}_2f(x)=\frac12 f^{(2)}(x)$ is a second order differential operator.

In this paper, we aim at providing the spectral representation in $\Lp$ Hilbert space and regularities properties of the solution to the Cauchy problem \eqref{eq:Cauchy_problem}.

The motivation underlying this study are several folds. On the one hand, the last three decades have witnessed the most intriguing leaps in engineering and scientific applications of such fractional operators, including but not limited to population dynamics, chemical technology, biotechnology and control of dynamical systems, and, we refer to the monographs of Kilbas et al.~\cite{Kilbas_fractional_DE}, Meerschaert and Sikorskii \cite{Meerschaert_Book} and  Sankaranarayanan \cite{Sanka-Phd}  for excellent and recent accounts on fractional operators. 
On the other hand, some recent interesting studies have revealed that the linear operator ${}^CD^{\alpha}_{+}$ 
is the infinitesimal generator of $P=(P_t)_{t\geq 0}$ 
the Feller semigroup corresponding to the so-called spectrally negative 
reflected $\alpha$-stable process,  see e.g.~\cite{Meeschart_15,BDP_supremum_stable,Patie_Simon_frac_deriv}. We will provide the formal definition of this process and semigroup in Section~\ref{sec:semigroup}, and, we simply point out that the reflected Brownian motion is obtained in  the limiting case $\alpha= 2$. The reflected $\alpha$-stable processes have been studied intensively in the stochastic processes literature. In particular, we mention that, in a recent paper, Baeumer et.~al.~\cite{Meeschart_15} showed the interesting fact  that the transition kernel of $P$  allows to  map the set of solutions of a Cauchy problem to its fractional (in time) analogue. Motivated by these findings, they provide a numerical method to approximate this transition kernel. In this perspective, in Theorem \ref{theorem:kernel} below, we provide two analytical and simple expressions for this transition kernel.

Although the Cauchy problem for the fractional operators associated to reflected stable processes plays a central role in many fields of sciences, to the best of our knowledge,  their spectral representation remain unclear. This seems to be attributed to the fact that there is not a unified theory for dealing with the spectral decomposition of non-local and non-self-adjoint operators, two properties satisfied, as we shall see in Proposition \ref{prop}, by the fractional operators considered therein. For a nice  account on classical and recent developments on this important topic, we refer to the two volume treatise of  Dunford and Schwartz \cite{Dunford_1963,Dunford1971}  and the monograph  of Davies \cite{Davies_2007}, and the survey paper by Sj\"ostrand \cite{Sjostrand-Survey}.

The purpose of this paper is to provide detailed information regarding the solution of the Cauchy problem \eqref{eq:Cauchy_problem} along with its elementary solution which corresponds to the transition probabilities of the Feller semigroups $P$ and its dual $\hatP$. More specifically, we   provide a spectral representation of this solution in an integral form involving the absolutely continuous part of the spectral measure, the generalized Mittag-Leffler functions as eigenfunctions and a weak Fourier kernel, a terminology which is defined in \cite{Patie_Savov_self_similar} and recalled in  Section~\ref{sec:residual_fucn}.  This kernel admits on a dense subset an integral representation which is given in terms of a function, having a simple expression, that we name a residual function for the dual semigroup (or co-residual function for $P$), as it is associated to elements in its residual spectrum. We refer to Section~\ref{sec:residual_fucn} for more precise definitions. As by-product of this spectral representation, we manage to derive regularity properties for the solution of \eqref{eq:Cauchy_problem} and also for the transition kernel. We already  mention that  we observe a cut-off phenomenon in the nature of the spectrum for the class of operators indexed by the parameter $\alpha \in (1,2]$. Indeed, while the class of Bessel operators which include the limit case  $\mathbf{D}_2$, i.e.~$\alpha=2$,  has  the positive axis $(0,\infty)$ as continuous spectrum, we shall show that this axis corresponds, when $\alpha \in (1,2)$, to the continuous point spectrum of the Caputo operator and the residual spectrum of the right-sided RL fractional operator. 

Our approach relies on an in-depth analysis of an intertwining relation that we establish between the Caputo fractional operator and a second order differential operator of Bessel type, which the latter turns out to be the generator of a  self-adjoint semigroup in $\Lp$. This is combined with the theory of continuous  frames that have been introduced recently in the mathematical physics literature, see \cite{Antoine_frames}.  This work complements nicely the recent works of Patie and Savov in \cite{Patie2015} and \cite{Patie-Savov-15} where such ideas are elaborated between linear operators having a common discrete point spectrum. We also mention that recently Kuznetsov and Kwasnicki \cite{Kuznetsov-K15} provide  a representation of the transition kernel of $\alpha$-stable processes killed upon entering  the negative real line, by inverting their resolvent density that they manage to compute explicitly. In this vein but in a more general context, Patie and Savov in the work in progress \cite{Patie_Savov_self_similar} explore further the idea developed in our paper to establish the spectral theory of the class of positive self-similar semigroups.

The rest of this paper is organized as follows. In Section~\ref{sec:semigroup}, we  introduce the reflected one-sided $\alpha$-stable processes and establish substantial analytical properties of the corresponding semigroups. In Section~\ref{sec:intertwin}, we shall derive the intertwining relation between the spectrally negative reflected stable semigroup and the Bessel-type semigroup. From this link, we extract a set of eigenfunctions that are described  in Section~\ref{sec:eigenfucn} which also includes some of their interesting properties such as the continuous upper frame property, completeness and  large asymptotic behavior. In Section~\ref{sec:residual_fucn}  we investigate the so-called co-residual functions. Finally, in Section~\ref{sec:spectral} we gather all previous results to provide the  spectral  decomposition of the two semigroups $P$ and $\hatP$ including two representations for their transition kernels. The regularity properties are also stated and proved in that Section.
\subsection{Notations}
Throughout, we denote by $\R_+=(0,\infty)$ the positive half-line. For any $-\infty\leq \underline{a}<\overline{a}\leq \infty$, we denote the strip $\C_{(\underline{a},\overline{a})}=\{z\in\C;\:\underline{a}<\Re(z)<\overline{a}\}$, and write simply $\C_+=\C_{[0,\infty)}$. We write $\C_{(-\infty,0)^c}=\{z\in\C;\: \arg(z)\neq \pi\}$  for the  complex plane cut along the negative real axis. We also write $\Lp$ for the Hilbert space of square integrable Lebesgue measurable functions on $\R_+$ endowed with the inner product $\left\langle f,g\right\rangle = \int_0^{\infty}f(x)g(x)dx$ and the associated norm $\|\cdot\|$. For any weight function $\nu$ defined on $\R_+$, i.e.~a non-negative Lebesgue measurable function, we denote by $\Lnu$ the weighted Hilbert space endowed with the inner product $\left\langle f,g\right\rangle_{\nu} = \int_0^{\infty}f(x)g(x)\nu(x)dx$ and its corresponding norm $\|\cdot\|_{\nu}$.
We use $\Cd_0(\R_+)$ to denote the space of continuous real-valued functions on $\R_+$ tending to $0$ at infinity, which becomes a Banach space when endowed with the uniform topology $\|\cdot\|_{\infty}$. Additionally, we denote $\Ctwo$ to be the space of twice continuously differentiable functions on $\R_+$, which vanishes at both $0$ and infinity, and $\Cd^{\infty}(\R_+)$ the space of functions with continuous derivatives on $\R_+$ of all orders, and ${\mathrm{B}}_b(\R_+)$ the real-valued bounded Borel measurable functions on $\R_+$.
For Banach spaces $\rmH_1,\rmH_2$, we define
\[ \mathbf{B}(\rmH_1,\rmH_2)=\{L:\rmH_1\rightarrow \rmH_2 \textnormal{ linear and continuous mapping}\}.\]
In the case of one Banach space $\rmH$, the unital Banach algebra $\mathbf{B}(\rmH,\rmH)$ is simply denoted by $\mathbf{B}(\rmH)$. Moreover, a semigroup $P=(P_t)_{t\geq 0}$ where $P_t\in\mathbf{B}(\rmH)$ is called a positive $\mathcal{C}_0$-semigroup on $\rmH$ if $P_{t+s}=P_t\circ P_s$, $P_t f\geq 0$ for $f\geq 0$, and for any functions $f\in \rmH$, $\|P_tf-f\|_{\rmH}\rightarrow 0$ as $t\rightarrow 0$. In the case when $\rmH=\Cd_0(\R_+)$ endowed with the uniform topology, we say $P$ is a Feller semigroup on $\R_+$. Furthermore, for an operator $T\in\mathbf{B}(\rmH_1,\rmH_2)$, we use the notation $\Ran(T)$ (resp.~$\Ker(T)$) for the range (resp.~the kernel) of $T$ and $\overline{\Ran}(T)$ (resp.~$\overline{\Ker}(T)$) for its closure. For any set of functions ${\mathrm{E}}\subseteq \rmH$, we use ${\mathrm{Span}}({\mathrm{E}})$ to denote the set of all linear combinations of functions in ${\mathrm{E}}$, and $\overline{{\mathrm{Span}}}({\mathrm{E}})$ for its closure.
We now proceed to define a few further notations. For two functions $f,g:\R_+\rightarrow \R$, we write $f\stackrel{a}{=}\bo(g)$  (resp.~$f\stackrel{a}{=}\so(g)$) if $\limsup_{x\to a}\frac{f(x)}{g(x)}<\infty$ (resp.~$\lim_{x\to a}\frac{f(x)}{g(x)}=0$), and
$ f\asymp g$  (resp.~$f \,{\buildrel a \over \sim}\, g$) if $\exists \: c>0 \textnormal{ such that } c\leq \frac{f(x)}{g(x)}\leq c^{-1} \textnormal{ for all } x\in\R_+$ (resp.~if $\lim_{x\rightarrow a}\frac{f(x)}{g(x)}=1$  for some $a\in \R\cup \{\pm \infty\}$).
Finally, for any $q\in\R_+$, we write $\rmdq f(x)=f(qx)$ and for any $\alpha,\tau>0$, we set
\begin{equation}\label{eq:def_e}
\e_{\alpha,\tau}(x)={\rm{d}}_{\tau^{\frac1\alpha}}\e_{\alpha}\left(x\right)=e^{-\tau x^{\alpha}},\quad x>0.\end{equation}

\section{Fractional operators and the reflected stable semigroup} \label{sec:semigroup}
Let $Z= (Z_t)_{t\geq 0}$ be a spectrally negative $\alpha$-stable L\'evy process with $\alpha\in (1,2)$, defined on a filtered probability space $(\Omega,\Filt,(\Filt_t)_{t\geq 0},\Prob=(\Prob_x)_{x\in\R})$.  It means that $Z$ is a process with stationary and independent increments, having no positive jumps, and its law is characterized, for $t>0$, by
\begin{equation} \label{eq:lapl_Z}
\log \E[e^{zZ_t}] = z^{\alpha}t, \quad z\in \C_+. \end{equation}
Here and below $z^{\alpha}$ is the main branch of the complex analytic function in the complex half-plane $\Re(z)\geq0$, so that $1^{\alpha} = 1$. Let $X=(X_t)_{t\geq 0}$ be the process $Z$ reflected at its infimum, that is, for any $t\geq0$,
 \[ X_t = 	\left\{\begin{array}{lcl}Z_t & \mbox{if} & t< T^Z_{(-\infty,0]}, \\
 Z_t - \inf_{s\leq t}Z_s & \mbox{if} & t\geq T^Z_{(-\infty,0]}, \end{array}\right.\]
with $T^Z_{(-\infty,0]} = \inf\{t>0;\: Z_t\leq 0\}$, and we write, for any $f\in \Bb,t,x\geq 0$,
 \begin{eqnarray}
 P_t f(x) &=&\E_x\left[f(X_t)\right],
 \end{eqnarray}
where $\E_x$ stands for the expectation operator associated to $\Prob_x(Z_0=x)=1$.
Next, let $\hatZ=-Z$ be the dual process of $Z$ (with respect to the Lebesgue measure), which is a spectrally positive $\alpha$-stable process,   and, let $\hatX=(\hatX_t)_{t\geq 0}$ be the process defined from $ \hatZ$ by a random time-change  as follows,  for any $t\geq0$, 
\begin{equation}\label{eq:def_hatx}
  \hatX_t= \hatZ_{\hattau_t},
\end{equation}
where $\hattau_t=\inf\{u>0;\:\widehat{A}_u>t\}$ and $\widehat{A}_t=\int_0^{t}\mathbb{I}_{\{\hatZ_s>0\}}ds$.
We also write for any $f\in \Bb,t,x\geq 0$,
\[\hatP_t f(x) =\widehat{\E}_x[f(\hatX_t)],\]
where $\widehat{\E}_x$ stands for the expectation operator associated to $\widehat{\Prob}_x(\hatZ_0=x)=1$. We are now ready to state our first result.
\begin{proposition} \label{prop}
\begin{enumerate}
\item $P$ is a positive contractive $\mathcal{C}_0$-semigroup on $\Co$, i.e.~a Feller semigroup, whose infinitesimal generator is $({}^CD^{\alpha}_{+},\mathcal{D}_{\alpha})$ where
\[ \mathcal{D}_{\alpha}=\left\{f \in\Co; \:f(x)=\int_0^{\infty} \left(e^{-y} \calJ(x)-\calJ^{'}(x-y)\mathbb{I}_{\{y<x\}}\right)g(y)dy ,  g\in \Co \right\}, \] with
\begin{eqnarray}
\calJ(z)&=&\frac{1}{\Gamma(1+\frac1\alpha)}\sum_{n=0}^{\infty}\frac{(e^{i\pi}z^{\alpha})^n}{\Gamma(\alpha n+1)},\quad z\in\mathbb{C},\label{eq:calJr}
\end{eqnarray}
which is easily seen to define a  function holomorphic on $\C_{(-\infty,0)^c}$.
\item $P$ admits a unique extension as a contractive $\mathcal{C}_0$-semigroup on $\Lp$, which is also denoted by  $P=(P_t)_{t\geq0}$ when there is no confusion (otherwise we may denote $P^F$ for the Feller semigroup). The domain of its infinitesimal generator $L^X$ is given by
\begin{equation}\label{eq:domain_LX}
\D_{\alpha}(\Lp)=\left\{f\in \Lp; \: \int_{-\infty}^{\infty}\left|\Fou^{+}_f(\xi)\right|^2|\xi|^{2\alpha}d\xi<\infty \right\} 
\end{equation}
where  $\Fou^{+}_f(\xi)=\int_{0}^{\infty}e^{i\xi x}f(x)dx$ is the one-sided Fourier transform of $f$ taken  in the $\rm{L}^2$ sense. 

\item  \label{it:dual} $\hatX$ is the (weak) dual of $X$ with respect to the Lebesgue measure. Moreover, $\hatP$ is a Feller semigroup which 
 admits a unique extension as a contractive $\mathcal{C}_0$-semigroup on $\Lp$, also denoted by $\hatP$, which has $(D^{\alpha}_{-}, \D_{\alpha}(\Lp))$  as infinitesimal generator. 
Clearly as $P \neq \hatP$, we get that $P$ is non-self-adjoint in $\Lp$.
\end{enumerate}
\end{proposition}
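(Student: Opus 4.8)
The plan is to treat the three items largely independently, building the pathwise and generator descriptions on the standard Lévy/excursion machinery and then transferring to $\Lp$ by density and Fourier analysis. For item (1), I would start from the fact that the spectrally negative $\alpha$-stable process $Z$ is a Feller process and that reflecting a Feller process at its infimum (equivalently, subtracting the running infimum, i.e.~a Skorokhod-type reflection) preserves the Feller property; positivity and contractivity of $(P_t)$ are immediate from the probabilistic representation $P_tf(x)=\E_x[f(X_t)]$. The substantive part is identifying the generator as $({}^CD^\alpha_+,\D_\alpha)$: I would compute $\E_x[f(X_t)]$ for $f$ smooth and use It\^o's formula for the stable Lévy process together with the boundary behaviour at $0$ induced by the reflection. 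The reflection term contributes exactly the $e^{-y}\calJ(x)$ piece in the description of $\D_\alpha$ — the function $\calJ$ in \eqref{eq:calJr} should be recognizable as (a renormalization of) a Mittag-Leffler/$q$-harmonic function for the fractional operator, i.e.~the scale-type function of the killed process — while the $\calJ'(x-y)\ind{y<x}$ piece is the potential (resolvent) kernel of the process killed at $0$. So the cleanest route is: show ${}^CD^\alpha_+ \calJ = \calJ$-type eigen-relations and that the resolvent $(\lambda-{}^CD^\alpha_+)^{-1}$ restricted appropriately has the stated kernel, then invoke the Hille–Yosida / Lumer–Phillips characterization of the generator domain.

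For item (2), I would extend $P$ from $\Co$ to $\Lp$ by first checking contractivity on the overlap $\Co\cap\Lp$ (using that the transition kernel of $X$ is sub-Markovian and, by the duality in item (3), that its dual is as well, hence $P_t$ is a contraction on both $\L^1$ and $\L^\infty$ and therefore on $\Lp$ by Riesz–Thorin), then extending by density since $\Co\cap\Lp$ is dense in $\Lp$. The generator domain \eqref{eq:domain_LX} is then obtained by Fourier methods: on the Schwartz-type test functions the Caputo derivative ${}^CD^\alpha_+$ acts, via the one-sided Fourier transform $\Fou^+$, as multiplication by a symbol of modulus comparable to $|\xi|^\alpha$ (this is precisely \eqref{eq:lapl_Z} continued to the imaginary axis, $z=i\xi$), so that $f$ is in the $\Lp$-domain of $L^X$ iff $|\xi|^\alpha \Fou^+_f \in \Lr$, i.e.~iff the integrability condition in \eqref{eq:domain_LX} holds; one must be a little careful that $\Fou^+$ is only an isometry onto a Hardy-type subspace, so the boundary terms at $0$ coming from integration by parts in the Caputo integral must be shown to vanish on the domain.

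For item (3), the weak duality $\hatX$ of $X$ with respect to Lebesgue measure follows from the classical fact that $-Z$ is the Lebesgue-dual of $Z$, combined with the observation that reflecting $Z$ at its infimum is dual, under time reversal, to the spectrally positive process $\hatZ=-Z$ time-changed by the clock $\widehat A_t=\int_0^t\ind{\hatZ_s>0}\,ds$ — this is exactly the statement that the occupation density identity for $X$ transfers to $\hatX$; I would verify $\langle P_tf,g\rangle=\langle f,\hatP_t g\rangle$ on a dense class and conclude $\hatP_t=P_t^*$ on $\Lp$. That $\hatP$ is Feller and contractive on $\Lp$ then follows as in items (1)–(2), and its generator on $\D_\alpha(\Lp)$ must be the adjoint of $L^X$; identifying this adjoint as the right-sided Riemann–Liouville derivative $D^\alpha_-$ is a direct integration-by-parts computation (the adjoint of $\int_0^x(x-y)^{-(\alpha-1)}\,dy$ against Lebesgue measure is $\int_x^\infty(y-x)^{-(\alpha-1)}\,dy$, and the $[\alpha]+1=2$ derivatives swap sign in pairs), once one checks the boundary terms vanish on $\D_\alpha(\Lp)$. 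The final sentence, non-self-adjointness, is then immediate: $P$ and $\hatP$ have generators ${}^CD^\alpha_+$ and $D^\alpha_-$ which are manifestly distinct operators (e.g.~evaluate both on a compactly supported test function bounded away from $0$: the Caputo derivative sees only $[0,x]$ while the right-sided RL derivative sees $[x,\infty)$, so they cannot coincide for $\alpha\in(1,2)$), hence $P\neq\hatP$ and $P\neq P^*$.

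I expect the main obstacle to be the precise identification of the generator domain $\D_\alpha$ in item (1) — in particular pinning down that the reflection at the infimum produces exactly the $e^{-y}\calJ(x)$ boundary term and no additional killing or sticky component, which requires a careful excursion-theoretic or resolvent-kernel argument rather than a formal It\^o computation — and, relatedly, controlling the boundary terms at $0$ when passing between ${}^CD^\alpha_+$ and its $\Lp$-adjoint, since the Caputo derivative is not symmetric in its treatment of the endpoint $0$ for non-integer $\alpha$.
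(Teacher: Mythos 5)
Your overall architecture overlaps with the paper's in places --- the $\L^1$/$\L^\infty$ contractivity coming from sub-Markovianity plus excessivity of the Lebesgue measure is essentially how the paper obtains the unique $\Lp$-extension, and the Fourier symbol $(\mp i\xi)^{\alpha}$ is indeed how both generators are identified --- but there is a genuine gap at the center of item (3). The weak duality of $X$ and $\hatX$ is not an ``observation'': the paper devotes a separate result (Lemma~\ref{lemma:time_change}) to proving that the reflected process $X$ has the \emph{same law} as the time-changed process $Y_t=Z_{\tau_t}$, where $\tau$ is the inverse of the occupation clock $A_t=\int_0^t\mathbb{I}_{\{Z_s>0\}}ds$; this uses Bertoin's extension of Pitman's theorem at the starting point $0$, resolvent identities for the recurrent extension, and the fact that $Z$ does not creep downwards. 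Only after that identification can one invoke Walsh's duality theory for dual additive functionals (with the Revuz measure identified as the Lebesgue measure) to conclude that $X$ and $\hatX$ are in weak duality; your plan assumes precisely this identification as given. Moreover, the Feller property of $\hatP$ does not ``follow as in items (1)--(2)'': $\hatX$ is not a reflected process but the recurrent extension of the killed dual self-similar process which leaves $0$ by a jump according to the jump-in measure $Cx^{-\alpha}$, and the paper needs Rivero's classification of recurrent extensions together with Blumenthal's construction to get Fellerness. Your proposal has no mechanism for this asymmetry between $X$ and $\hatX$ at the boundary.

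The second soft spot is the domain identification in item (2). You correctly flag that $\Fou^{+}$ is not a full-line isometry and that the boundary behaviour at $0$ is the issue, but you do not say how to resolve it. The paper's resolution is to pass through the full-line L\'evy generator: for the zero-extension $\bar f$ one has $L^Xf(x)=\mathbb{I}_{\{x>0\}}L^Z\bar f(x)$ by Gzyl's theorems on generators of time-changed processes (this is where the time-change description of $X$ is used again), after which the known anisotropic Sobolev domain of $L^Z$ on $\Lr$ transfers to \eqref{eq:domain_LX} by a density-plus-closedness argument in both directions. Without a substitute for this step, the claimed ``iff'' is an assertion rather than a proof. There is also a circularity to repair: your item (2) invokes the duality of item (3) to get $\L^1$-contractivity, while your item (3) is built on the $\Lp$ framework of item (2); the paper avoids this by deriving excessivity of the Lebesgue measure directly from Rivero's results on self-similar recurrent extensions. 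Item (1) is unproblematic --- the paper simply cites the literature for the Feller property, the generator ${}^CD^{\alpha}_{+}$ and the domain $\mathcal{D}_{\alpha}$ --- though your plan to re-derive it from scratch via It\^o's formula and Hille--Yosida would be a substantial project in its own right.
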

\begin{remark}
We point out that when $\alpha=2$, $P$ is the 1-dimensional Bessel semigroup, see \cite[Appendix 1]{Borodin-Salminen-02}, which also belongs to the class of the so-called $\alpha$-Bessel semigroups, which are reviewed in more details in Appendix~\ref{sec:appen}. In this case, $\hatP=P$ and $P$ is self-adjoint in $\Lp$.
\end{remark}
\begin{remark}
Note that the function $\calJ(e^{i\pi}z^{\frac1\alpha})$ is the (generalized) Mittag-Leffler function of parameters $(\alpha,1)$, see e.g.~\cite{Kilbas_fractional_DE} for a detailed account on this function.
\end{remark}
In order to prove this Proposition, we first state and prove the following lemma, which generalizes \cite[Lemma 2]{Bertoin-92} and may be of independent interests.
\begin{lemma} \label{lemma:time_change}
Let $Y_t=Z_{\tau_t}, t\geq 0$, where $\tau_t=\inf\{u>0;\:A_u>t\}$ and $A_t=\int_0^{t}\mathbb{I}_{\{Z_s>0\}}ds$. Then  $(Y_t)_{t\geq 0}$ is a $(\Filt_{\tau_t})_{t\geq 0}$ strong Markov process and for any $f\in\Bb,t,x\geq 0$, we have
\begin{equation} \label{eq:Y_tau_X}
P_t f(x)=\E_x[f(Y_t)].
\end{equation}
Moreover, $(Y_t)_{t\geq 0}$ and $(\hatX_t)_{t\geq 0}$ are dual processes with respect to the Lebesgue measure.
\end{lemma}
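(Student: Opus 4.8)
The plan is to prove the three assertions in turn, using the general theory of time changes of Markov processes together with the spectral negativity of $Z$, and to deduce the duality statement from the classical duality between $Z$ and $\hatZ=-Z$ with respect to Lebesgue measure. For the strong Markov property of $Y$ I would invoke the standard fact that time-changing a strong Markov process by the right-continuous inverse of a continuous additive functional preserves the strong Markov property: $A_t=\int_0^t\mathbb{I}_{\{Z_s>0\}}ds$ is a continuous $(\Filt_t)$-adapted additive functional, so each $\tau_t=\inf\{u>0:A_u>t\}$ is an $(\Filt_u)$-stopping time (indeed $\{\tau_t\le u\}=\{A_u\ge t\}\in\Filt_u$) and $(\Filt_{\tau_t})_{t\ge 0}$ is a filtration; since $Z$ is a mean-zero $\alpha$-stable process with $\alpha>1$ it is a martingale, hence oscillates, so $A_\infty=\infty$ a.s.\ and all $\tau_t$ are finite. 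For an $(\Filt_{\tau_t})$-stopping time $S$ one checks, from the additive-functional identity $A_{\tau_S+u}=S+A_u\circ\theta_{\tau_S}$, that $\tau_{S+t}=\tau_S+\tau_t\circ\theta_{\tau_S}$, hence $Y_{S+t}=Y_t\circ\theta_{\tau_S}$, and the strong Markov property of $Z$ at the $(\Filt_u)$-stopping time $\tau_S$ then delivers that of $Y$ at $S$; right-continuity of the paths of $Y$ and the inclusion $Y_t\in[0,\infty)$ follow from the right-continuity and monotonicity of $\tau$ together with $Z_{\tau_t}\ge 0$.

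For the identity $P_tf(x)=\E_x[f(Y_t)]$ the idea is to reduce to the boundary point $0$ and then compare excursion structures. Fixing $x>0$ and setting $T=T^Z_{(-\infty,0]}$, on $\{t<T\}$ one has $A_t=t$, so $\tau_t=t$ and $X_t=Z_t=Y_t$; thus $X$, $Y$ and $Z$ coincide up to $T$, while $X_T=Z_T-\underline{Z}_T=0$ and, since $Z$ has no positive jumps it can enter $\R_+$ only by a continuous upward passage through $0$, so $Y_T=0$ as well. By the strong Markov property of $Z$ at $T$ the post-$T$ part of $X$ is, independently of $\Filt_T$, the L\'evy process reflected at its running infimum issued from $0$, while $Y$ is strong Markov in its own time scale (by the strong Markov property just established) with $T=H^Y_0$, the first time $Y$ hits $0$; conditioning at $T$ then shows that both $P_tf(x)$ and $\E_x[f(Y_t)]$ equal $\E_x[f(Z_t)\mathbb{I}_{\{t<T\}}]$ plus $\E_x[\mathbb{I}_{\{t\ge T\}}\,v(t-T)]$ with $v(\cdot)$ the value at $0$ of the respective semigroup, and so the claim reduces to $\E_0[f(X_t)]=\E_0[f(Y_t)]$. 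For $x=0$ both $X=Z-\underline{Z}$ and $Y$ are concatenations, in the sense of It\^o excursion theory, of excursions away from $0$ governed by the common killed semigroup in $\R_+$ — that of $Z$ killed on exiting $\R_+$, since both processes agree with $Z$ there — and by the same excursion measure: an excursion of $Y$ is a maximal positive sojourn of $Z$, an excursion of $X$ is an excursion of $Z$ above its running infimum, and by spatial homogeneity of $Z$ these have the same law, both leaving $0$ by the only mechanism available to a spectrally negative process, a continuous upward passage from $0$. Hence the excursion measures coincide up to the normalisation of the local time, so $X$ and $Y$ have the same law when issued from $0$ and therefore the same transition semigroup; this is the substance of \cite[Lemma 2]{Bertoin-92}, which the present lemma extends. (Equivalently one may compare the $q$-resolvents at $0$, each being a ratio assembled from the common excursion quantities $\E_0[\int_0^{\zeta_1}e^{-qt}f(\varepsilon_1(t))dt]$ and $\E_0[e^{-q\zeta_1}]$ of a first excursion.)

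For the duality, recall that $Z$ and $\hatZ=-Z$ are in duality with respect to Lebesgue measure on $\R$. The substitution $s=A_t$ gives, for $q>0$ and $f\in\Bb$,
\[ \E_x\Big[\int_0^\infty e^{-qt}f(Y_t)\,dt\Big]=\E_x\Big[\int_0^\infty e^{-qA_t}\,\mathbb{I}_{\{Z_t>0\}}\,f(Z_t)\,dt\Big],\qquad x\ge 0, \]
and the analogous formula holds with $(Y,Z,A)$ replaced by $(\hatX,\hatZ,\widehat A)$, $\widehat A_t=\int_0^t\mathbb{I}_{\{\hatZ_s>0\}}ds$. Since $A_t$ is, for each $t$, a functional of the occupation of $\R_+$ by $(Z_s)_{0\le s\le t}$ and hence invariant under reversal of the path on $[0,t]$, the duality of $Z$ and $\hatZ$ transfers to these resolvents, so the $q$-resolvent of $Y$ and that of $\hatX$ are in duality with respect to Lebesgue measure on $\R_+$, which is the assertion. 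Conceptually, $Y$ and $\hatX$ are simply the traces of the mutually dual processes $Z$ and $\hatZ$ on the common set $\R_+$, and traces of dual Markov processes are again dual.

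I expect the delicate step to be the base case $x=0$ of the second assertion: one must show that reflecting $Z$ at its infimum and excising the negative sojourns of $Z$ — two a priori different boundary constructions — produce the same excursion measure away from $0$, with compatible local-time normalisations, so that the full semigroups agree and not merely the excursion shapes. This is precisely the point at which the absence of positive jumps of $Z$ is indispensable, through the uniqueness of the way a spectrally negative process enters $\R_+$ from $0$; carrying it out uniformly over $1<\alpha<2$ is the crux of the proof and mirrors the argument in \cite[Lemma 2]{Bertoin-92}.
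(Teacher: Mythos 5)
Your proposal is correct in substance and shares the paper's skeleton --- in particular, both arguments ultimately rest on the same external input, namely \cite[Lemma 2]{Bertoin-92} (equivalently, the excursion-theoretic identification you sketch) to settle the case $x=0$ --- but it differs in execution at two places, and the comparison is worth recording. For the strong Markov property and the identity \eqref{eq:Y_tau_X}, the paper first checks that the fine support of $(A_t)_{t\geq0}$ is $[0,\infty)$ (using that $Z$ has unbounded variation, so $\Prob_x(T^Z_{[0,\infty)}=0)=1$ for $x\geq 0$) and then cites \cite{Gzyl1980}; it reduces to $x=0$ by comparing the $q$-resolvents of $X$ and $Y$ through the recurrent-extension identity $U_qf(x)=U^{\dag}_qf(x)+\E_x[e^{-qT^X_0}]U_qf(0)$ of \cite{Rivero05}, after verifying $T_0^Y=T_0^X$ via the non-creeping of $Z$. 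Your direct conditioning at $T=T^Z_{(-\infty,0]}$ accomplishes the same reduction; the one point you pass over is normality at the boundary, i.e.\ that $\tau_0=0$ $\Prob_0$-a.s.\ so that $Y$ issued from $0$ is well defined --- this is exactly what the fine-support computation in the paper delivers, and it is needed for \eqref{eq:Y_tau_X} at $x=0$, so you should make the regularity of $(0,\infty)$ for $Z$ started at $0$ explicit rather than leave it implicit in the excursion picture. For the duality, the routes genuinely diverge: the paper invokes \cite[Proposition 4.4 and Theorem 4.5]{Walsh_1972} to see that $(A_t)$ and $(\widehat{A}_t)$ are dual continuous additive functionals, so the time-changed processes are in duality with respect to the Revuz measure of $A$, identified as Lebesgue measure via \cite{Revuz_1970}; your argument instead writes the resolvent of $Y$ as $\E_x[\int_0^{\infty}e^{-qA_t}\mathbb{I}_{\{Z_t>0\}}f(Z_t)dt]$ and transfers duality by time reversal on $[0,t]$, using that the occupation time of $\R_+$ is reversal-invariant. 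This is a correct and more self-contained derivation (it is essentially the proof of Walsh's theorem specialized to L\'evy processes), at the cost of having to state carefully the joint duality of $(Z_t,A_t)$ and $(\hatZ_t,\widehat A_t)$ under the Lebesgue-initial-measure laws; the citation route buys brevity and avoids that bookkeeping.
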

\begin{proof}
For any $f\in\Bb,q>0$, let
\[U_qf(x)=\int_0^{\infty}e^{-qt}P_tf(x)dt,\quad U^{\dag}_qf(x)=\int_0^{\infty}e^{-qt}\E_x\left[f(X_t)\mathbb{I}_{\{t<T^X_0\}}\right]dt\]
be the resolvents of $X$ and  $X^{\dag}=(X^{\dag}_t)_{t\geq 0}$, the process $X$ killed at time $T_0^X=\inf\{t>0;\: X_t = 0\}$, respectively. It is easy to observe from the construction of $X$ that $T_0^X=T^Z_{(-\infty,0]}$. Moreover, by \cite[Example 3]{Rivero05}, $X$ can also be defined as the unique self-similar recurrent extension of $X^{\dag}$ and we get, from an application of the strong Markov property, that for all $x\geq 0$,
\begin{equation}
U_qf(x)=U^{\dag}_qf(x)+\E_x\left[e^{-q T^X_0}\right]U_qf(0).
\end{equation}
Next, since $Z$ has  paths of unbounded variation, by \cite[Theorem 6.5]{Kyprianou-06}, we have $\Prob_x(T^Z_{[0,\infty)}=0) =1$ for $x\geq0$ and $\Prob_x(T_{[0,\infty)}^Z>0)=1$ for any $x<0$, where $T_{[0,\infty)}^Z=\inf\{t>0;\: Z_t\geq 0\}$. Thus, the fine support of the additive functional $(A_t)_{t\geq 0}$, defined as the set $\{x\in\R;\: \Prob_x(\tau_0=0)=1\}$, is plainly $[0,\infty)$. Moreover, as the L\'evy process $Z$ is a Feller process and therefore a Hunt process (see e.g.~\cite[Section 3.1]{Chung_Walsh_Markov}), we have from \cite{Gzyl1980} that $(Y_t)_{t\geq 0}$ is a $(\Filt_{\tau_t})_{t\geq 0}$ strong Markov process, whose resolvent is defined, for $f\in\Bb$, by
\[V_qf(x)=\int_{0}^{\infty}e^{-qt}\E_x[f(Y_t)]dt.\]
Furthermore, it is easy to observe that $A_t=t$ for any $t\leq T^Z_{(-\infty,0]}$ and thus $\tau_t=t$  for any $t< T^Z_{(-\infty,0]}$. On the other hand, since $Z$ is a spectrally negative L\'evy process with no Gaussian component, $Z$ does not creep below, see e.g.~\cite[Exercise 7.4]{Kyprianou-06}, and therefore $T_0^Z =\inf\{t>0;\: Z_t=0\} > T^Z_{(-\infty,0]} \: a.s.$, where $a.s.$ throughout this proof, means $\Prob_x$-almost surely for all $x>0$. Moreover, observe that a.s.
\[A_{T^Z_0}=\int_0^{T^Z_{(-\infty,0]}} \mathbb{I}_{\{Z_s>0\}}ds + \int_{T^Z_{(-\infty,0]}}^{T^Z_0} \mathbb{I}_{\{Z_s>0\}}ds = A_{T^Z_{(-\infty,0]}}=T^Z_{(-\infty,0]} .\]
Next, recalling that $T^Z_{(-\infty,0]}=T^X_0$, we deduce from the previous identity that, with the obvious notation, a.s.
\begin{equation}\label{eq:T0X_T0Y}
T_0^Y=A_{T^Z_0}=T^Z_{(-\infty,0]}=T^X_0.
\end{equation}
Since it is clear that $Y_t=Z_{\tau_t}=Z_t=X_t$ for $t<T^X_0$, we have for any $f\in\Bb$ and $q>0$,
\[V^{\dag}_qf(x)=\int_0^{\infty}e^{-qt}\E_x\left[f(Y_t)\mathbb{I}_{\{t<T^Y_0\}}\right]dt=\int_0^{\infty}e^{-qt}\E_x\left[f(X_t)\mathbb{I}_{\{t<T^X_0\}}\right]dt=U^{\dag}_qf(x).\]
Hence, the strong Markov property of $(Y_t)_{t\geq 0}$ together with \eqref{eq:T0X_T0Y} yield that, for every $x\geq 0$,
\[V_qf(x)=V^{\dag}_qf(x)+\E_x\left[e^{-qT^Y_0}\right]V_qf(0)=U^{\dag}_qf(x)+\E_x\left[e^{-qT^X_0}\right]V_qf(0).\]
Next, according to \cite[Lemma 2]{Bertoin-92} and after an obvious dual argument, $(Y_t)_{t\geq 0}$ and $(X_t)_{t\geq 0}$ have the same law under $\Prob_0$ and therefore $V_qf(0)=U_qf(0)$. Hence
\[U_qf(x)=U^{\dag}_qf(x)+\E_x\left[e^{-qT^X_0}\right]U_qf(0)=U^{\dag}_qf(x)+\E_x\left[e^{-qT^X_0}\right]V_qf(0)=V_qf(x),\]
which proves the identity \eqref{eq:Y_tau_X}. Next, by \cite[Proposition 4.4]{Walsh_1972}, we observe that $(A_t)_{t\geq 0}$ and $(\widehat{A}_t)_{t\geq 0}$ are dual additive functionals, both of which are finite for each $t$ and continuous. Hence by \cite[Theorem 4.5]{Walsh_1972}, $(Y_t)_{t\geq 0}$ and $(\hatX_t)_{t\geq 0}$ are dual processes with respect to the Revuz measure associated to $A$, which, by \cite{Revuz_1970}, is the Lebesgue measure. This completes the proof of this lemma.
\end{proof}
\begin{proof}[Proof of Proposition~\ref{prop}]
The Feller property of the semigroup $P$ is given in \cite[Proposition VI.1]{BertoinLevy}. Moreover, the fact that the infinitesimal generator of $P$ is ${}^CD^{\alpha}_{+}$ has been proved in various papers, see e.g.~\cite{BDP_supremum_stable} and \cite{Patie_Simon_frac_deriv}, and the domain $\D_{\alpha}$ is given in \cite[Proposition 2.2]{Patie_Simon_frac_deriv}, which completes the proof of the first item. Next, from \cite[Lemma 3]{Rivero05} and its proof, we know that, up to a multiplicative positive constant, the Lebesgue measure is the unique excessive measure for $P$, where with the notation of \cite[Example 3]{Rivero05}, $\gamma=1-\frac1\alpha$. Thus, since $X$ is stochastically continuous, see \cite[Lemma 2.1]{Lamperti-72}, a classical result from the general theory of Markov semigroups, see e.g.~\cite[Theorem 5.8]{Daprato-06}, yields that the Feller semigroup $P$ admits a unique extension as a contractive $\mathcal{C}_0$-semigroup on $\Lp$. We now proceed to characterize the domain of the infinitesimal generator of the $\Lp$-extension, denoted by $\D^X$. To this end, we first observe from \cite[Theorem 12.16]{Berg-Forst} that since $Z$ is a L\'evy process, its semigroup $(\overline{P}_t)_{t\geq 0}$, i.e.~$\overline{P}_tf(x)=\E_x[f(Z_t)],x\in\R$, is a $\Lr$-Markov semigroup, and 
 its infinitesimal generator, denoted by $L^Z$, has the following anisotropic Sobolev space as its domain
\begin{equation}\label{eq:domain_LZ}
\D^Z = \left\{\bar{f}\in \Lr; \: \int_{-\infty}^{\infty}\left|\Fou_{\bar{f}}(\xi)\right|^2|\xi|^{2\alpha} d\xi<\infty\right\},
\end{equation}
where  $\Fou_{\bar{f}}(\xi)=\int_{-\infty}^{\infty}e^{i\xi x}\bar{f}(x)dx$ is the Fourier transform of $\bar{f}$. Now for a function $f$ on $\R^+$ we define its extension $\bar{f}:\R\rightarrow \R$   as $\bar{f}(x)=f(x)\mathbb{I}_{\{x> 0\}}$. Then,  for   any $f\in\overline{\D}_{\alpha}(\Lp) =\{ f \in \D_{\alpha}(\Lp);\: \bar{f}\in \Ctwor\}$, we have clearly $\bar{f}\in\D^Z \cap \Ctwor$ and thus by combining \cite[Section I.2]{BertoinLevy} and \cite[Theorem 2.1]{Gzyl1980} we get, that for any $x>0$,
\begin{equation}
L^Xf(x)=a(x)L^Z\bar{f}(x),
\end{equation}
where $a(x)=\mathbb{I}_{\{x> 0\}}$ from \cite[(3.6)]{Gzyl1980}. Therefore, since $L^Z\bar{f}\in\Lr$, it is obvious that $L^Xf\in \Lp$, which implies that $f\in \D^X$. Next, for any $\tau>0$, let $f_{\tau}(x)=\tau^3 x^3e^{-\tau x}, x>0$, then easy computations yield that  for all $\tau>0$ $f_{\tau}\in \overline{\D}_{\alpha}(\Lp)$, hence by the Wiener's theorem for Mellin transform $\overline{\D}_{\alpha}(\Lp)$ is dense in $\Lp$ and therefore, for any $f\in \D_{\alpha}(\Lp)$, we can take $(f_n)_{n\geq 0} \subset \overline{\D}_{\alpha}(\Lp) \cap \Ctwo$ such that $f_n\rightarrow f$ in $\Lp$. Writing $\bar{f}_n$ and $\bar{f}$ their corresponding extensions to $\Lr$ as above, we still have $\bar{f}_n\rightarrow \bar{f}$ in $\Lr$ and  $\bar{f}\in D^Z$. Also note that for each $\xi\in\R$,
\begin{equation}\label{eq:Fourier_gen}
\Fou^{+}_{L^X f_n}(\xi)=\Fou_{L^Z \bar{f}_n}(\xi)=(-i\xi)^{\alpha}\Fou_{\bar{f}_n}(\xi)\rightarrow (-i\xi)^{\alpha}\Fou_{\bar{f}}(\xi)=\Fou_{L^Z \bar{f}}(\xi),
\end{equation}
where we used \cite[Theorem 12.16]{Berg-Forst} for the second and last identity. Therefore $L^X f_n$ converges in $\Lp$ and $f\in \D^X$ by the closedness of infinitesimal generator. This shows that $\D_{\alpha}(\Lp)\subseteq\D^X$. On the other hand, take now $f\in\D^X \cap \Ctwo$ and let $\bar{f}$ be constructed as above. Then by \cite[Theorem 2.6]{Gzyl1980} and recalling that the fine support of $(A_t)_{t\geq 0}$ is $\R_+$, we have
\begin{equation} \label{eq:LZ_LX}
L^Z\bar{f}(x) = 	\left\{\begin{array}{lcl}b(x)L^Xf(x) & \mbox{for} & x\geq 0, \\
0 & \mbox{for} & x<0, \end{array}\right.
\end{equation}
where, denoting $\mathbb{I}_{+}(x)=\mathbb{I}_{\{x>0\}}$, \[b(x)=\lim_{t\rightarrow 0}\frac{\E_x[\int_0^t\mathbb{I}_{\{Z_s>0\}}ds]}{t}=\lim_{t\rightarrow 0}\frac{\int_0^t \overline{P}_s \mathbb{I}_{+}(x)ds}{t}=\lim_{t\rightarrow 0}\overline{P}_t\mathbb{I}_{+}(x)=\mathbb{I}_{\{x> 0\}}\] for each $x\in\R$. Therefore, we have
\begin{align*}
\int_\R\left(L^Z\bar{f}(x)\right)^2dx&=\int_\R\left((\mathbb{I}_{\{x< 0\}}+\mathbb{I}_{\{x\geq 0\}})L^Z\bar{f}(x)\right)^2dx
= \int_0^{\infty} \left(L^Z\bar{f}(x)\right)^2dx=\int_0^{\infty} \left(L^X f(x)\right)^2dx, 
\end{align*}
which implies that $\bar{f}\in \D^Z$ and $f\in\D_{\alpha}(\Lp)$. Next, since we have proved that $\D_{\alpha}(\Lp)\subseteq \D^X$ and $\D_{\alpha}(\Lp) \cap \Ctwo$ is dense in $\Lp$, we have that $ \D^X \cap \Ctwo$ is also dense in $\Lp$. Hence the same argument as above shows that \eqref{eq:LZ_LX} still holds for any $f\in \D^X$, which further proves that $\D^X\subseteq \D_{\alpha}(\Lp)$ and completes the proof for the second argument. For the duality argument, we first observe  from Lemma~\ref{lemma:time_change} that $X$ and $\hatX$ are dual processes with respect to the Lebesgue measure. Moreover, note that the minimal process $X^{\dag}$ belongs to the class of \textit{ positive $\frac1\alpha$-self-similar Markov processes} as introduced in \cite{Lamperti-72}, which also provides a bijection between positive self-similar processes and L\'evy processes stated as follows. Let us define, for any $t\geq0$, $\vartheta_{t}=\inf\{u>0;\int_0^u (X_s^{\dag})^{-\alpha}ds > t \}$, then the process
\begin{equation}\label{eq:defk}
\xi^{\dag}_t = \log X^{\dag}_{\vartheta_t},\end{equation}
is a L\'evy process killed at an independent exponential time. More specifically, by \cite{Patie_Simon_frac_deriv}, the Laplace exponent of $\xi^{\dag}$ is
\begin{equation} \label{eq:psi0}
\psi^{\dag}(u)=\frac{\Gamma(u+1)}{\Gamma(u-\alpha+1)}, \:u>-1.
\end{equation}
Note that by writing  $\theta$ for the largest non-negative root of the convex function $\psi^{\dag}$, it is easy to check that $\theta=\alpha-1\in(0,1)$. Hence by \cite[Section 5]{Rivero05}, there exists a dual process of $X^{\dag}$, denoted by $\hatX^{\dag}$, with the Lebesgue measure serving as the reference measure. Moreover, $\hatX^{\dag}$ is also a positive $\frac1\alpha$-self-similar process with its corresponding L\'evy process denoted by $\widehat{\xi}^{\dag}$, which is  the dual of the L\'evy process obtained from $\xi^{\dag}$ by means of Doob $h$-transform via the invariant function $h(x)=e^{\theta x}, x\in \R$. Therefore, the Laplace exponent of $\widehat{\xi}^{\dag}$ takes the form, for $u<0$,
\begin{equation*}
\widehat{\psi}(u)=\psi^{\dag}(-u+\theta)=\psi^{\dag}(-u+\alpha-1)=\frac{\Gamma(\alpha-u)}{\Gamma(-u)}.
\end{equation*}
Note that $\widehat{\xi}^{\dag}$ drifts to $-\infty$ a.s. and thus $\hatX^{\dag}$ has a a.s. finite lifetime $T_0^{\hatX^{\dag}}=\inf\{t>0;\hatX^{\dag}_t\leq 0\}$. Hence by recalling that $X$ can be viewed as the  recurrent extension of $X^{\dag}$ that leaves $0$ continuously a.s., we deduce from \cite[Lemma 6]{Rivero05} that $\hatX$ can also be viewed as the recurrent extension of $\hatX^{\dag}$ which leaves $0$ by a jump according to the  jump-in measure $C x^{-\alpha}, x,C>0$. The Feller property of the semigroup of such recurrent extension has been shown in \cite[Proposition 3.1]{Blumenthal_83}, while the existence of the $\Lp$-extension follows by the same argument than the one we developed  for $P$. 
Moreover, from \cite[Theorem 12.16]{Berg-Forst}, we deduce easily that $\D^{\hatZ}=\D^Z$, hence using the same method as above, we get that $\D^{\hatX}=\D^X=\D_{\alpha}(\Lp)$. Finally, using the same arguments as in \eqref{eq:Fourier_gen}, we see that for any $f\in \D_{\alpha}(\Lp)$,
\[\Fou^{+}_{L^{\hatX} f}(\xi)=\Fou_{L^{\hatZ} \bar{f}}(\xi)=(i\xi)^{\alpha}\Fou_{\bar{f}}(\xi).\]
Comparing this identity with \cite[Lemma 2.1 and Theorem 2.3]{Ervin}, we conclude that $L^{\hatX} f=D_{-}^{\alpha} f$ on $ \D_{\alpha}(\Lp)$. This completes the proof.
\end{proof}

\section{Intertwining relationship} \label{sec:intertwin}
We say that a linear operator $\Lambda$ is a \textit{multiplicative operator} if it admits the following representation, for any $f \in \mathrm{B}_b(\R_+)$,
\[ \Lambda f(x)=\int_0^{\infty}f(xy)\lambda(y)dy, \]
for some integrable function $\lambda$. When in addition $\lambda$ is the density of the law of a random variable ${\bf{X}}$, i.e.~$\lambda(y)\geq 0$ and $\spnu{1,\lambda}{}=1$, we say that $\Lambda$ is a \textit{Markov multiplicative operator}.
Moreover, $\M_{\lambda}= \M_{\Lambda}=\M_{\mathbf{X}}$ is called a Markov multiplier where for at least $\Re(s)=1$,
\[ \M_{{\Lambda}}(s)=\int_0^{\infty}y^{s-1}\lambda(y)dy, \]
is the Mellin transform of  $\lambda$. By adapting the developments in \cite[2.1.9]{Titchmarsh_86} based on the Fourier transform,  we also have that if $\int_0^{\infty}y^{-\frac12}\lambda(y)dy <\infty$ then  $\Lambda \in \B{\Lp}$ with, for any $f \in \Lp$,
\begin{equation}\label{eq:mm}
 \M_{{\Lambda f}}(s)=\M_{{\Lambda}}(1-s)\M_{f}(s).
 \end{equation}
 Note that this latter provides that $\Lambda$ is one-to-one in $\Lp$ if  $\M_{{\Lambda}}(1-s)\neq 0$.
We also recall from \cite{Paris01} that if $s\mapsto \M_{\lambda}(s)$ is defined, absolutely integrable and uniformly decays to zero along the lines of the strip $s\in\C_{(\underline{a},\overline{a})}$ for some $\underline{a}<\overline{a}$, then the Mellin inversion theorem applies to yield, for any $x>0$,
\begin{equation} \label{eq:Mellin_inversion}
\lambda(x)=\frac{1}{2\pi i}\int_{a-i\infty}^{a+i\infty}x^{-s}\M_{\lambda}(s)ds, \quad \underline{a}<a<\overline{a}.
\end{equation}
 Now we are ready to state the following.
\begin{theorem} \label{thm:intertwin}
Let us write, for any $\alpha \in (1,2)$,
\begin{equation} \label{eq:defn_La}
\M_{\La}(s)=\frac{\Gamma(\frac{s-1}{\alpha}+1)\Gamma(\frac{s}{\alpha})}{\Gamma(\frac1\alpha)\Gamma(s)}, \quad s \in \C_+.
\end{equation}
Then, the following holds.
\begin{enumerate}
\item \label{it:Ma} $\M_{\La}$ is a Markov multiplier and  $\La\in \mathbf{B}(\Lp)\cap \mathbf{B}(\Co)$.  Moreover, it is one-to-one  on $\Co$, and,  in $\Lp$, $\overline{\Ran}(\La)=\Lp$.
\item \label{it:int} Moreover, for any $t\geq 0$ and $f\in \Lp$, the following intertwining relation holds
\begin{equation} \label{eq:intertwin}
P_t\La f=\La Q_t f,
\end{equation}
where $Q=(Q_t)_{t\geq 0}$ is the $\Lp$-extension of the $\alpha$-Bessel self-adjoint semigroup as defined in Appendix~\ref{sec:appen}.
\item \label{it:intg} Consequently, we have, for any $f\in\D_{\mathbf{L}}(\Lp)$,
\begin{equation}
{}^CD^{\alpha}_+ \La f = \La \mathbf{L} f,
\end{equation}
where the fractional operator  ${}^CD^{\alpha}_+$  was defined in \eqref{eq:Caputo}, while the second order differential operator $\mathbf{L}$ and its $\Lp$-domain $\D_{\mathbf{L}}(\Lp)$ are defined in \eqref{eq:L_Q} and \eqref{eq:L2domain_bfL}, respectively.
\end{enumerate}
\end{theorem}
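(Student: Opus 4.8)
The plan is to treat the three items in order; items \eqref{it:int} and \eqref{it:intg} reduce, once \eqref{it:Ma} is in hand, to a single elementary identity between Gamma functions.

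\textbf{Item \eqref{it:Ma}.} Plugging $s=1$ into \eqref{eq:defn_La} gives $\M_{\La}(1)=1$, so the candidate kernel $\lambda$ will have unit mass; the substantive point is that $\lambda\ge 0$. For this I record the first-order difference equation obeyed by \eqref{eq:defn_La}: using $\Gamma(z+1)=z\Gamma(z)$,
\[
\M_{\La}(s+1)=\frac{1}{\alpha}\,\frac{\Gamma(\tfrac{s+1}{\alpha})}{\Gamma(\tfrac{s-1}{\alpha}+1)}\,\M_{\La}(s),
\]
and note that the multiplier $\phi(s)=\Gamma(\tfrac{s+1}{\alpha})/\Gamma(\tfrac{s-1}{\alpha}+1)$ is, after a linear substitution turning it into a ratio $\Gamma(as+b)/\Gamma(as+b')$ with $0<b-b'<1$, a Bernstein--Gamma type multiplier; hence, by the theory of exponential functionals of L\'evy processes underlying the construction of the $\alpha$-Bessel objects in Appendix~\ref{sec:appen}, the normalised solution of this recursion is the Mellin transform of a positive random variable (concretely, of a multiplicative convolution of a power of a Gamma variable with an exponential functional), so $\lambda$ is a probability density and $\M_{\La}$ is a Markov multiplier. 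Stirling's formula then shows $\M_{\La}$ decays like $e^{-\frac{\pi(2-\alpha)}{2\alpha}|\Im(s)|}$ on vertical lines in the strip $\C_{(0,\infty)}$, on which it is holomorphic (its poles lying in $\{\Re(s)\le 0\}$); so $\lambda$ is recovered by the Mellin inversion \eqref{eq:Mellin_inversion}, is smooth, and $\int_0^\infty y^{-1/2}\lambda(y)\,dy=\M_{\La}(\tfrac12)<\infty$. By \eqref{eq:mm} we get $\La\in\B{\Lp}$, and $\La\in\B{\Co}$ because the probability density $\lambda$ makes $\La$ a positive contraction mapping $\Co$ into itself (dominated convergence). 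For injectivity on $\Co$: viewing $\La$ as a convolution operator on the multiplicative group, its kernel has Fourier--Mellin transform $\xi\mapsto\M_{\La}(1-i\xi)$, which is nowhere zero (no reciprocal-Gamma zero meets that line), so $\La f=0$ with $f\in\Co$ forces $f\equiv 0$. Finally $\overline{\Ran}(\La)=\Lp$ since $\Ran(\La)^\perp=\Ker(\La^\ast)$ and $\La^\ast$ is again a multiplicative operator whose Mellin multiplier is zero-free on the critical line $\Re(s)=\tfrac12$ by \eqref{eq:mm}.

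\textbf{Items \eqref{it:int} and \eqref{it:intg}.} I would prove the generator intertwining ${}^CD^{\alpha}_{+}\La h=\La\mathbf{L}h$ first on the dense class $\mathcal{E}=\mathrm{Span}\{\e_{\alpha,\tau};\ \tau>0\}$. One checks that $\mathcal{E}\subset\D_{\mathbf{L}}(\Lp)$ (the action of $\mathbf{L}$ on $\e_{\alpha,\tau}$ is a linear combination of $\e_{\alpha,\tau}$ and $x^\alpha\e_{\alpha,\tau}$, still in $\Lp$), that $\mathcal{E}$ is a core for $\mathbf{L}$ (density of $(\mu-\mathbf{L})\mathcal{E}$ by a Mellin-transform uniqueness argument in the spirit of the proof of Proposition~\ref{prop}), and that $\La\mathcal{E}\subset\D_{\alpha}(\Lp)$ (the Mellin transform of $\La\e_{\alpha,\tau}$ decays exponentially on $\Re(s)=\tfrac12$ by the estimate above). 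Since $\La$, ${}^CD^{\alpha}_{+}$ and $\mathbf{L}$ all commute with dilations and scale with exponent $\alpha$, it suffices to treat $\tau=1$. The Mellin shift rule for the Caputo derivative gives $\M_{{}^CD^{\alpha}_{+}h}(s)=\frakp(s)\,\M_h(s-\alpha)$ with $\frakp(s)=\Gamma(1-s+\alpha)/\Gamma(1-s)$, while the self-adjoint second-order operator $\mathbf{L}$ of \eqref{eq:L_Q}, having power-type coefficients, gives $\M_{\mathbf{L}h}(s)=\frakq(s)\,\M_h(s-\alpha)$ with $\frakq$ an explicit polynomial of degree two; combining with \eqref{eq:mm}, the identity ${}^CD^{\alpha}_{+}\La h=\La\mathbf{L}h$ becomes
\[
\frakp(s)\,\M_{\La}(1-s+\alpha)=\frakq(s)\,\M_{\La}(1-s),
\]
which, upon inserting \eqref{eq:defn_La}, collapses to $\Gamma(z+1)=z\Gamma(z)$. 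Thus the generator intertwining holds on $\mathcal{E}$. To pass to the semigroups, observe that $\La(\mu-\mathbf{L})h=(\mu-{}^CD^{\alpha}_{+})\La h$ for $h\in\mathcal{E}$, whence applying the resolvents yields $R^P_\mu\La g=\La R^Q_\mu g$ on the dense set $(\mu-\mathbf{L})\mathcal{E}$, hence (by boundedness of $\La$, $R^P_\mu$, $R^Q_\mu$) on all of $\Lp$ for $\mu$ large; inverting the Laplace transform gives $P_t\La=\La Q_t$ on $\Lp$, i.e.~\eqref{eq:intertwin}. Then \eqref{it:intg} is immediate: for $f\in\D_{\mathbf{L}}(\Lp)$, applying $\La$ to $t^{-1}(Q_tf-f)\to\mathbf{L}f$ and using \eqref{eq:intertwin} and the boundedness of $\La$ shows $t^{-1}(P_t\La f-\La f)\to\La\mathbf{L}f$ in $\Lp$, so $\La f\in\D_{\alpha}(\Lp)$ and ${}^CD^{\alpha}_{+}\La f=\La\mathbf{L}f$.

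\textbf{Main obstacle.} The one genuinely non-formal input is the positivity and normalisation of $\lambda$ in Item \eqref{it:Ma}: the remainder is bookkeeping around \eqref{eq:mm}, but showing that $\M_{\La}$ is an honest (probability) Mellin transform requires identifying it with an exponential functional / an explicit multiplicative convolution, which I would import from the $\alpha$-Bessel construction of Appendix~\ref{sec:appen}. A secondary, still delicate, difficulty is the domain accounting for items \eqref{it:int}--\eqref{it:intg}: verifying that $\mathcal{E}$ is a core for $\mathbf{L}$, that $\La$ maps $\D_{\mathbf{L}}(\Lp)$ into $\D_{\alpha}(\Lp)$, and that the Mellin shift rules for ${}^CD^{\alpha}_{+}$ and $\mathbf{L}$ are legitimately applied on $\mathcal{E}$.
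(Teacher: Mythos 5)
For item \eqref{it:Ma} your argument is in substance the paper's: positivity of the kernel is obtained there too by recognizing $\M_{\La}$ as the Mellin transform of (the $\tfrac1\alpha$-power of) an exponential functional $\int_0^{\infty}e^{-\xi_t}dt$ of a subordinator, via a Gamma-ratio functional equation, the moment identification of Carmona--Petit--Yor and the uniqueness argument of Patie--Savov. Two caveats. First, the theory you invoke does not ``underlie the $\alpha$-Bessel construction of Appendix~\ref{sec:appen}'' --- that appendix contains only the Bessel semigroup and the Hankel-type transform; the positivity input is imported from \cite{Carmona-Petit-Yor-97} and \cite{Patie-Savov-15}. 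Second, your parenthetical identification of the law as a multiplicative convolution of a power of a Gamma variable with an exponential functional is not right: the paper's Lemma~\ref{lemma:decompose_e} shows $G_{\alpha}\eqindist \Xa\times I_{\alpha}$, i.e.~$I_{\alpha}$ is a multiplicative \emph{factor} of $G_{\alpha}$, and $I_{\alpha}$ itself is purely a power of an exponential functional. Your boundedness and injectivity arguments are fine; for $\overline{\Ran}(\La)=\Lp$ the paper instead exhibits explicit preimages ($\La \rmdq g_{\alpha}=\rmdq\e_{\alpha}$, Lemma~\ref{lemma:La_G_e}) and uses completeness of the dilated exponentials, while your $\Ran(\La)^{\perp}=\Ker(\La^{*})$ argument via the zero-free multiplier on $\Re(s)=\tfrac12$ is a legitimate alternative.

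For items \eqref{it:int}--\eqref{it:intg} you take a genuinely different route, and it has a gap. The paper proves the \emph{semigroup} identity first and probabilistically: it identifies the entrance law of $P$ (with Mellin transform $\M_{\Xa}$), proves the factorization $G_{\alpha}\eqindist\Xa\times I_{\alpha}$, and feeds this into the intertwining criterion of \cite[Proposition 3.2]{Carmona-Petit-Yor-97} to get $P^F_t\La=\La Q^F_t$ on $\Co$, extending to $\Lp$ by density; item \eqref{it:intg} then follows by differentiating the semigroup identity, with no domain bookkeeping. Your route (generator identity on a core, then resolvents) is structurally sound but its central step is not justified as written: the shift rule $\M_{{}^CD^{\alpha}_{+}g}(s)=\frac{\Gamma(1-s+\alpha)}{\Gamma(1-s)}\M_g(s-\alpha)$ is the rule for the left-sided Riemann--Liouville derivative and presupposes vanishing boundary terms, whereas $\La h(0)=h(0)=1$ for $h=\e_{\alpha,\tau}$; consequently ${}^CD^{\alpha}_{+}\La h$ and the Riemann--Liouville derivative of $\La h$ differ by multiples of $x^{-\alpha}$ and $x^{1-\alpha}$ (neither in $\Lp$), and $\M_{\La h}(s-\alpha)$ on $\Re(s)=\tfrac12$ is only a meromorphic continuation past the pole at $s=\alpha$, not a Plancherel transform. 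So the reduction of the generator identity to $\Gamma(z+1)=z\Gamma(z)$ cannot be read off on the critical line; it would need a termwise power-series verification (in the spirit of the paper's computation $\La\J=\calJ$ in \eqref{eq:Ljj}) or a careful continuation argument. Likewise, that $\mathcal{E}$ is a core for $\mathbf{L}$ and that $\La\mathcal{E}\subset\D_{\alpha}(\Lp)$ with the $\Lp$-generator acting there as ${}^CD^{\alpha}_{+}$ are asserted, not proved. These are precisely the points you flag as delicate, and they are the reason the paper routes the argument through the probabilistic criterion rather than through generators.
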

The proof of this Theorem is split into three steps. First, we show that \eqref{eq:defn_La} is indeed a Markov multiplier. Then, we establish the identity \eqref{eq:intertwin} in the space $\Co$. Finally, by remarking that  $\Co$ is dense in $\Lp$, we can extend the intertwining identity to $\Lp$ by a continuity argument.

\subsection{The Markov multiplicative operator $\La$} \label{section:Markov_kernel}
In order to  prove Theorem~\ref{thm:intertwin}\eqref{it:Ma}, which provides some substantial properties of $\La$, we shall need the following claims.
\begin{lemma}\label{lemma:La_G_e}
Let us  define
\begin{eqnarray}
	g_{\alpha}(z)&=&\sum_{n=0}^{\infty} \frac{\Gamma\left(\frac1\alpha\right) \Gamma(\alpha n+1)}{\Gamma(n+\frac1\alpha)( n!)^2} (e^{i\pi} z^{\alpha})^n, \label{eq:r}
\end{eqnarray}
then $g_{\alpha}$ is holomorphic on $\C_{(-\infty,0)^c}$. Moreover, $g_{\alpha}\in\Lp$ with $\La  g_{\alpha}= \e_{\alpha}$ where $\e_{\alpha}$ is defined in \eqref{eq:def_e}.
\end{lemma}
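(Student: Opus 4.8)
The plan is to establish the three assertions — holomorphy of $g_{\alpha}$ on $\C_{(-\infty,0)^c}$, the intertwining identity $\La g_{\alpha}=\e_{\alpha}$, and $g_{\alpha}\in\Lp$ — in that order. Set $c_n=\frac{\Gamma(1/\alpha)\Gamma(\alpha n+1)}{\Gamma(n+1/\alpha)(n!)^2}>0$, so that $g_{\alpha}(z)=\sum_{n\geq 0}c_n(e^{i\pi}z^{\alpha})^n$. A Stirling estimate gives $c_n=C_n\,(\alpha^{\alpha}e^{3-\alpha})^n\,n^{(\alpha-3)n}$ with $C_n$ of at most polynomial order in $n$; since $1<\alpha<2$ forces $\alpha-3<0$, this yields $c_n^{1/n}\to 0$, so $w\mapsto\sum_{n\geq 0}c_nw^n$ is entire. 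As $z\mapsto e^{i\pi}z^{\alpha}$ (principal branch) is holomorphic on $\C_{(-\infty,0)^c}$, so is $g_{\alpha}$; in particular $g_{\alpha}$ is real-analytic and bounded near $0$ on $\R_+$ with $g_{\alpha}(0^+)=c_0=1$.

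For the identity, the crucial input is the closed form of the power moments of the kernel $\lambda_{\alpha}$ of $\La$. From \eqref{eq:defn_La}, $\M_{\La}$ is analytic and, by Stirling, rapidly decaying on every vertical line $\Re(s)=c>1-\alpha$; hence Mellin inversion is valid there, $\lambda_{\alpha}$ is rapidly decreasing, and $\int_0^{\infty}y^{\alpha n}\lambda_{\alpha}(y)\,dy=\M_{\La}(\alpha n+1)=\frac{\Gamma(n+1)\Gamma(n+\frac1\alpha)}{\Gamma(\frac1\alpha)\Gamma(\alpha n+1)}$ for every integer $n\geq 0$, whence the telescoping relation $c_n\,\M_{\La}(\alpha n+1)=\frac1{n!}$. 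Fix $x>0$ and expand $g_{\alpha}(xy)=\sum_{n\geq 0}c_n(-1)^nx^{\alpha n}y^{\alpha n}$ inside $\La g_{\alpha}(x)=\int_0^{\infty}g_{\alpha}(xy)\lambda_{\alpha}(y)\,dy$; the interchange of sum and integral is legitimate by Tonelli's theorem, because $\sum_{n\geq 0}c_nx^{\alpha n}\int_0^{\infty}y^{\alpha n}\lambda_{\alpha}(y)\,dy=\sum_{n\geq 0}\frac{x^{\alpha n}}{n!}=e^{x^{\alpha}}<\infty$, and it produces $\La g_{\alpha}(x)=\sum_{n\geq 0}\frac{(-1)^n}{n!}x^{\alpha n}=e^{-x^{\alpha}}=\e_{\alpha}(x)$.

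The membership $g_{\alpha}\in\Lp$ is the main obstacle, since the defining series gives no usable control of $g_{\alpha}(x)$ for large $x$: the triangle-inequality bound $|g_{\alpha}(x)|\leq\sum_n c_nx^{\alpha n}$ discards the alternating-sign cancellation and grows faster than any power. Instead I would compute the Mellin transform of $g_{\alpha}$ directly by recognizing the defining series as a residue sum: collecting the residues at the simple poles $s=0,-\alpha,-2\alpha,\dots$ of the meromorphic function $\Phi(s)=\frac{\Gamma(\frac1\alpha)\Gamma(1-s)\Gamma(\frac s\alpha)}{\alpha\,\Gamma(1-\frac s\alpha)\Gamma(\frac{1-s}\alpha)}$ reproduces exactly $\sum_n c_n(-1)^nx^{\alpha n}$, so that $\M_{g_{\alpha}}=\Phi$ on the strip $0<\Re(s)<1$ and $g_{\alpha}$ is the inverse Mellin transform of $\Phi$ along any vertical line there. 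Stirling's asymptotics along vertical lines then yield a bound $|\Phi(c+it)|\leq C\,|t|^{\beta}e^{-\pi(\alpha-1)|t|/(2\alpha)}$ for $|t|\geq 1$ and some $\beta\in\R$; since $\alpha>1$ this is square-integrable along $\Re(s)=c$, so the Mellin--Plancherel theorem gives $g_{\alpha}\in\Lp$. (Alternatively, shifting the inversion contour to the right past the poles of $\Gamma(1-s)$ at $s=2,3,\dots$ — the would-be pole at $s=1$ being cancelled by the zero of $1/\Gamma(\frac{1-s}\alpha)$ there — produces the algebraic decay $g_{\alpha}(x)=\bo(x^{-2})$ as $x\to\infty$, which together with boundedness near $0$ places $g_{\alpha}$ in $\Co\cap\Lp$.) The technical crux is the careful but routine justification of these contour shifts — vanishing of the horizontal arcs and absolute convergence of the residue series — which rests entirely on the super-exponential decay of the $c_n$ and the exponential decay of $\Phi$ along vertical lines; and once $g_{\alpha}\in\Lp$ with $\M_{g_{\alpha}}=\Phi$ is in hand, the identity $\La g_{\alpha}=\e_{\alpha}$ can also be repackaged through the Mellin multiplication rule \eqref{eq:mm}, since $\M_{\La}(1-s)\,\Phi(s)=\frac1\alpha\Gamma(\frac s\alpha)=\M_{\e_{\alpha}}(s)$.
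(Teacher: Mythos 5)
Your proposal is correct and follows essentially the same route as the paper: Stirling bounds on the coefficients for holomorphy, identification of $\M_{g_\alpha}$ with the Mellin--Barnes integrand $\Phi$ (the paper's $\M_\alpha$, identical up to rewriting $\Gamma(1+\frac1\alpha)=\frac1\alpha\Gamma(\frac1\alpha)$) via a residue computation, the Parseval/Plancherel identity for the $\Lp$ membership, and a Fubini--Tonelli interchange using $\M_{\La}(\alpha n+1)=1/(c_n n!)$ for the identity $\La g_\alpha=\e_\alpha$. The only substantive difference is that the paper carries out in full the rectangle-contour estimates (in particular the vanishing of the left vertical side as $N\to\infty$, valid for $|\arg z|<\frac{\pi}{2\alpha}$) that you flag as the technical crux but defer.
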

\begin{proof}
First, from the Stirling approximation
\begin{equation}\label{eq:saa}
\Gamma(a)\siminfy \sqrt{2\pi} a^{a-\frac12}e^{-a},
 \end{equation}
see \cite[(1.4.25)]{Lebedev-72},  we  get that $\frac{\Gamma(\alpha n+\alpha+1)\Gamma(n+\frac1\alpha)}{\Gamma(\alpha n+1)\Gamma(n+1+\frac1\alpha)(n+1)^2} \eqinfy \bo(n^{\alpha-3})$, hence, as $\alpha \in (1,2)$, $g_{\alpha}$ is holomorphic on $\C_{(-\infty,0)^c}$. We now proceed to show that $g_{\alpha}\in \Lp$. To this end, let us define, for $0<\Re(s)<1$,
\begin{equation*}
	\M_{\alpha}\left(s\right)=\frac{\Gamma(1+\frac1\alpha)\Gamma(\frac{s}{\alpha})\Gamma(1-s)}{\Gamma(1-\frac{s}{\alpha})\Gamma(\frac{1-s}{\alpha})}
\end{equation*}
and we first aim at proving that $\M_{\alpha}=\M_{g_{\alpha}}$ the Mellin transform of $g_{\alpha}$.
For this purpose, observe that $s\mapsto \M_{\alpha}(s)$ is holomorphic on $\C_{(0,1)}$ and then consider the contour integral $I_{N,B}=\frac{1}{2\pi i}\int_{C_{N,B}}z^{-s}\M_{{\alpha}}\left(s\right)ds$ where $C_{N,B}$ is the rectangle with vertices at $\frac12\pm iB$ and $-\alpha N-\frac{\alpha}{2}\pm iB$ for some large $N\in\mathbb{N}$ and $B>0$. Then we can obviously split $I_{N,B}$ into four parts, namely $I_{N,B}=I_1+I_2+I_3+I_4$ where
\begin{align*}
I_1=\frac{1}{2\pi i}\int_{\frac12+iB}^{-\alpha N-\frac{\alpha}{2}+iB}z^{-s}\M_{\alpha}(s)ds, &\quad I_2=\frac{1}{2\pi i}\int_{-\alpha N-\frac{\alpha}{2}+iB}^{-\alpha N-\frac{\alpha}{2}-iB}z^{-s}\M_{\alpha}(s)ds,\\
I_3=\frac{1}{2\pi i}\int_{-\alpha N-\frac{\alpha}{2}-iB}^{\frac12-iB}z^{-s}\M_{\alpha}(s)ds, &\quad I_4=\frac{1}{2\pi i}\int_{\frac12-iB}^{\frac12+iB}z^{-s}\M_{\alpha}(s)ds.
\end{align*}
Next, observing  from the  Stirling approximation,  see e.g.~\cite[(2.1.8)]{Paris01}, that for fixed $a\in\R$,
\begin{equation} \label{eq:asympt_gamma}
|\Gamma(a+ib)|\sipminfy C |b|^{a-\frac12}e^{-\frac{\pi}{2}|b|},
\end{equation}
with $C=C(a)>0$, we deduce, for some $C_{\alpha}>0$, that
\begin{equation} \label{eq:asymp_Ma}
\left|\M_{\alpha}\left(a+ib\right)\right| \sipminfy C_{\alpha} |b|^{\frac3\alpha a-a-\frac1\alpha} e^{-\frac{\pi}{2}(1-\frac1\alpha)|b|},
\end{equation}
and, hence
\begin{equation} \label{eq:asymp_Mga}
	\left|z^{-(a+ib)}\M_{{\alpha}}\left(a+ib\right)\right|\sipminfy C_{\alpha} |z|^{-a}|b|^{\frac3\alpha a-a-\frac1\alpha} e^{-\frac{\pi}{2}(1-\frac1\alpha)|b|+\arg(z) b}.
\end{equation}
Therefore, if $|\arg(z)|<\frac{\pi}{2}(1-\frac1\alpha)$ and  $N$ is kept fixed,  we have both
\begin{equation}\label{eq:I0}
\lim_{B\rightarrow\infty} |I_1| =\lim_{B\rightarrow\infty} |I_3|= 0.
\end{equation} For the integral $I_2$, we have
\begin{align*}
 |I_2|&\leq  \frac{1}{2\pi}|z|^{\alpha N+\frac{\alpha}{2}} \int_{-\infty}^{\infty}e^{\arg(z) b}\left|\frac{\Gamma(1+\frac1\alpha)\Gamma(-N-\frac12+i\frac{b}{\alpha})\Gamma(1+\alpha N+\frac{\alpha}{2}+ib)}{\Gamma(N+\frac32-i\frac{b}{\alpha})\Gamma(N+\frac12+\frac1\alpha-i\frac{b}{\alpha})}\right|db \\
 &= \frac{1}{2}|z|^{\alpha N+\frac{\alpha}{2}} \int_{-\infty}^{\infty}e^{\arg(z) b}\left|\frac{\Gamma(1+\frac1\alpha)\Gamma(1+\alpha N+\frac{\alpha}{2}+ib)}{\Gamma(N+\frac32-i\frac{b}{\alpha})^2\Gamma(N+\frac12+\frac1\alpha-i\frac{b}{\alpha})\cosh(\frac{\pi b}{\alpha})}\right|db
\end{align*}
where we have used the reflection formula for the gamma function. Using the Stirling approximation again, it is easy to derive, for large $N$, the  upper bound
\begin{align*}
\left|\frac{\Gamma(1+\alpha N+\frac{\alpha}{2}+ib)}{\Gamma(N+\frac32-i\frac{b}{\alpha})^2}\right|\leq C e^{N(\alpha\log\alpha-\alpha-2)}N^{(\alpha-2)N+\frac{\alpha-1}{2}}
\end{align*}
which is uniform in $b\in\R$ and where $C>0$. Moreover, recalling, from \cite[(5.1.3)]{Paris01},  that, for $N\geq 1$ and $b\in\R$,
$\left|\Gamma(N+\frac12+\frac1\alpha-i\frac{b}{\alpha})\right|\geq \frac{\Gamma(N+\frac12+\frac1\alpha)}{\cosh^{\frac12} (\frac{\pi b}{\alpha})}$,
	we find
\begin{align*}
|I_2|&\leq C \frac{e^{N(\alpha\log\alpha-\alpha-2)}N^{(\alpha-2)N+\frac{\alpha-1}{2}}}{\Gamma(N+\frac12+\frac1\alpha)}\int_{-\infty}^{\infty}\frac{e^{\arg(z) b}}{\cosh^{\frac12} (\frac{\pi b}{\alpha})}db
\end{align*}
where  the last integral converges absolutely whenever $|\arg(z)|<\frac{\pi}{2\alpha}$. For such $z$, since   $1<\alpha<2$, we get that $\lim_{N\rightarrow\infty}|I_2|= 0$. Therefore, combining this with \eqref{eq:I0}, we have, for $|\arg(z)|<\frac{\pi}{2\alpha} \wedge \frac{\pi}{2}(1-\frac1\alpha)=\frac{\pi}{2\alpha} $,
\[\lim_{N,B\rightarrow \infty} I_{N,B}=\lim_{B\rightarrow \infty} I_4 = \frac{1}{2\pi i}\int_{\frac12-i\infty}^{\frac12+i\infty}z^{-s}\M_{{\alpha}}\left(s\right)ds.\]
Hence an application of Cauchy's integral theorem yields
\begin{equation}
\frac{1}{2\pi i}\int_{\frac12-i\infty}^{\frac12+i\infty}z^{-s}\M_{{\alpha}}\left(s\right)ds=\sum_{n=0}^{\infty} \frac{\Gamma(\frac1\alpha)\Gamma(\alpha n+1)}{\Gamma(n+\frac1\alpha)( n!)^2} (-1)^n z^{\alpha n}=g_{\alpha}(z)
\end{equation}
where we sum  over the poles $s=-\alpha n, n=0,1\ldots$ of $\Gamma\left(\frac{s}{\alpha}\right)$ with  residues $\frac{\alpha(-1)^n}{n!}$.
This shows that $\M_{g_\alpha}=\M_{\alpha}$.
Since $\alpha\in(1,2)$, we have, from \eqref{eq:asymp_Ma}, that $b\mapsto\M_{{\alpha}}(\frac12+ib)\in \Lr$ and by the Parseval identity for the Mellin transform
we conclude that $g_{\alpha}\in \Lp$.  Finally, by means of a standard application of Fubini theorem, see e.g.~\cite[Section 1.77]{Titchmarsh39}), one shows that, for any $x>0$,
\begin{align*}
\La   g_{\alpha}(x)&=\sum_{n=0}^{\infty} \frac{\Gamma(\frac1\alpha)\Gamma(\alpha n+1)}{\Gamma(n+\frac1\alpha)( n!)^2}\M_{\La}(\alpha n+1) (-1)^n x^{\alpha n}
=\sum_{n=0}^{\infty}(-1)^n\frac{x^{\alpha n}}{n!}= \e_{\alpha}(x),
\end{align*}
where we used the expression \eqref{eq:defn_La}. This completes the proof of the lemma.
\end{proof}
Next, let us show that $\M_{\La}$ is the Mellin transform of a random variable that we denote by $I_{\alpha}$. To this end, we write, for any $u>0$,
\[\phi_{\alpha}(u)=\frac{\Gamma(\alpha u+1)}{\Gamma(\alpha u+1-\alpha)}\frac{1}{u-1+\frac1\alpha}=\frac{\alpha}{\Gamma(2-\alpha)}+\int_0^{\infty}(1-e^{-uy})\frac{\alpha(\alpha-1)}{\Gamma(2-\alpha)}\frac{e^{-\frac{y}{\alpha}}}{(1-e^{-\frac{y}{\alpha}})^{\alpha}}dy,\]
where the second identity follows after some standard  computation, see e.g.~\cite[(4.2)]{Patie-06c}. As plainly  $\int_0^{\infty}(y\wedge 1)\frac{e^{-\frac{y}{\alpha}}}{(1-e^{-\frac{y}{\alpha}})^{\alpha}}dy<\infty$, we get,  from \cite[Theorem 3.2]{SchillingSongVondracek10},  that $\phi_{\alpha}$ is a Bernstein function, whose definition is given in \cite[Definition 3.2]{SchillingSongVondracek10}. Moreover, by \cite[Section 5]{SchillingSongVondracek10}, $\phi_{\alpha}$ is the Laplace exponent of a subordinator, that is an increasing process with stationary and independent increments, which we denote by  $(\xi_t)_{t\geq 0}$. Next, observing that for any $n\in \N$,
\begin{equation}\label{eq:I_moment}
\M_{\La}(\alpha n+1)=\frac{n!\Gamma(n+\frac1\alpha)}{\Gamma(\frac1\alpha)\Gamma(\alpha n+1)} = \frac{n!}{\prod_{k=1}^n \phi_{\alpha}(k)},
\end{equation}
we deduce, from \cite[Proposition 3.3]{Carmona-Petit-Yor-97}, that  $(\M_{\La}(\alpha n+1))_{n\geq0}$
is the Stieltjes moment sequence of the random variable $\int_0^{\infty}e^{-\xi_t}dt$.  Moreover, observe from its definition \eqref{eq:defn_La} and  applications of the recurrence relation of the gamma function that $\M_{\La}$ satisfies the functional equation, on $s \in \C_+$,
\[\M_{\La}(\alpha s+1) = \frac{s}{\phi_{\alpha}(s)}\M_{\La}(\alpha(s-1)+1),\quad  \M_{\La}(1)=1,\]
hence, by a uniqueness argument developed in \cite[Section 7]{Patie-Savov-15}, we have
\begin{equation} \label{eq:Mell_Ia}
\M_{\La}(s+1)=\E\left[\left(\int_0^{\infty}e^{-\xi_t}dt\right)^{\frac s\alpha}\right]=\E\left[I^s_{\alpha}\right].
 \end{equation}
 Consequently $\M_{\La}(s)$ is the Mellin transform of the variable $I_{\alpha}=\left(\int_0^{\infty}e^{-\xi_s}ds\right)^{\frac1\alpha}$.  Finally,  since the  law of $\int_0^{\infty}e^{-\xi_t}dt$ is known to   be absolutely continuous, see e.g.~\cite[Proposition 7.7]{Patie-Savov-15}, so is the one of $I_{\alpha}$, therefore we conclude that  $\M_{\La}$ is indeed a Markov multiplier, which provides the first claim of Theorem \ref{thm:intertwin}\eqref{it:Ma}.
 Next, the one-to-one property of $\La$ follows from the fact that the mapping $s\mapsto \M_{\La}(s)$ is clearly zero-free on the line $ 1+i\R$. Moreover, writing $\lambda_{\alpha}$ the density of $I_{{\alpha}}$, we have by dominated convergence that for any $f\in\Co$, $\La f \in \Co$ with
$\|\La f\|_{\infty}\leq \|f\|_{\infty}$, that is, $\La\in \mathbf{B}(\Co)$. On the other hand, for $f\in \Lp$, Jensen's inequality and a change of variable yield
\begin{align*}
\|\La f\|^2 = \int_0^{\infty}\E\left[f(xI_{\alpha})\right]^2dx&\leq \int_0^{\infty}\E\left[f^2(xI_{\alpha})\right]dx =\E[I_{{\alpha}}^{-1}] \: \|f\|^2
\end{align*}
where $\E[I_{{\alpha}}^{-1}]=\M_{\La}(0)=\frac{\Gamma(1-\frac1\alpha)}{\Gamma(1+\frac1\alpha)}<\infty$. Hence $\La\in \mathbf{B}(\Lp)$. Moreover, from Lemma~\ref{lemma:La_G_e}, it is easy to conclude that $\La \rmdq g_{\alpha}=\rmdq \e_{\alpha}$ for all $q>0$, where $\rmdq g_{\alpha} \in\Lp$ since $q\rmdq$ is a unitary operator in $\Lp$. Hence, by the well-known result that $\overline{{\mathrm{Span}}}(\rmdq \e_{\alpha})_{q>0}= \Lp$, we have that $\La$ has a dense range in $\Lp$, which completes the proof of Theorem \ref{thm:intertwin}\eqref{it:Ma}.

\subsection{Proofs of Theorem~\ref{thm:intertwin}\eqref{it:int} and \eqref{it:intg}}
We recall that a collection of $\sigma$-finite measures $(\eta_t)_{t>0}$ is called an \textit{entrance law} for the semigroup $P$ if for any $t,s>0$ and any $f\in \Co, \eta_tP_sf=\eta_{t+s}f$
where $\eta_tf=\int_0^{\infty}f(x)\eta_t(dx)$. We also recall from Appendix~\ref{sec:appen} that $G_{\alpha}$  is the $\frac1\alpha$ power of a gamma variable with parameter $\frac1\alpha>0$, that is $\Prob(G_{\alpha}\in dy)=\frac{e^{-y^{\alpha}}}{\Gamma(1+\frac1\alpha)}dy, y>0$. Now we are ready to state the following Lemma.
\begin{lemma} \label{lemma:decompose_e}
$P$ admits an entrance law $(\eta_t)_{t>0}$ defined for any $t>0$  by $\eta_t f = \eta_1 \mathrm{d}_{t}f=\int_0^{\infty}f(ty)\eta_1(dy)$ where $\eta_1(dy)=\lambda_{\Xa}(y)dy$, with $\lambda_{\Xa} \in\Lp$, is the probability measure of a variable $\Xa$. Its  Mellin transform takes the form
\begin{equation} \label{eq:Mellin_T_r}
\M_{\Xa}(s)=\frac{\Gamma(s)}{\Gamma(\frac{s}{\alpha}+1-\frac1\alpha)}, \quad s\in\mathbb{C}_+.
\end{equation}
Moreover, we have the following factorization of the variable $G_{\alpha}$
\begin{equation*}
G_{\alpha} \eqindist \Xa \times I_{\alpha},
\end{equation*}
where $\eqindist$ stands for the identity in distribution  and $\Xa$ is considered independent of $I_{\alpha}$, which we recall was characterized in \eqref{eq:Mell_Ia}.
\end{lemma}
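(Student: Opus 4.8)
The plan is to realise $(\eta_t)_{t>0}$ as the entrance law of $X$ issued from $0$, to identify the Mellin transform of $\Xa=X_1$ through the (dual) fractional Fokker--Planck equation, and then to read off the factorisation from a short computation with gamma functions.

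First I would recall from the proof of Proposition~\ref{prop} that $X$ is the recurrent extension of the $\tfrac1\alpha$-positive self-similar Markov process $X^\dagger$ that leaves $0$ continuously a.s.; hence $X$ may be issued from $0$, and since (as used there) the transition kernel of $P$ is absolutely continuous for $t>0$, $\eta_t(dy):=\Prob_0(X_t\in dy)$ is a probability measure on $\R_+$. The simple Markov property gives $\eta_tP_sf=\E_0[P_sf(X_t)]=\E_0[f(X_{t+s})]=\eta_{t+s}f$ for $f\in\Co$, so $(\eta_t)_{t>0}$ is an entrance law; by $\tfrac1\alpha$-self-similarity and $X_0=0$ under $\Prob_0$ one has $X_t\eqindist t^{1/\alpha}X_1$, hence $\eta_t=\eta_1\mathrm{d}_{t^{1/\alpha}}$. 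Writing $\Xa$ for a variable with law $\eta_1$, $\lambda_{\Xa}$ for its density, and $\mathcal{M}(s):=\E[\Xa^{s-1}]=\M_{\lambda_{\Xa}}(s)$: since $Z$, hence $X_1$ under $\Prob_0$, has a very light right tail while $\lambda_{\Xa}$ is bounded near $0$, the map $\mathcal{M}$ is finite and holomorphic on $\C_+$.

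Next I would identify $\mathcal{M}$. By Proposition~\ref{prop}(\ref{it:dual}), $\hatP=P^{*}$ in $\Lp$ with generator $(D^\alpha_-,\D_{\alpha}(\Lp))$, so for $t>0$ the density solves the forward equation $\partial_t\eta_t=D^\alpha_-\eta_t$ (using $\eta_t\in\D_{\alpha}(\Lp)$, which holds by the smoothing of the semigroup). Plugging in $\eta_t(x)=t^{-1/\alpha}\lambda_{\Xa}(xt^{-1/\alpha})$, using the scale covariance $D^\alpha_-[g(c\,\cdot)](x)=c^\alpha(D^\alpha_-g)(cx)$ and $\partial_t\big(t^{-1/\alpha}\lambda_{\Xa}(xt^{-1/\alpha})\big)=-\tfrac1{\alpha t}t^{-1/\alpha}\big(y\lambda_{\Xa}(y)\big)'\big|_{y=xt^{-1/\alpha}}$, and then setting $t=1$, yields $D^\alpha_-\lambda_{\Xa}=-\tfrac1\alpha\big(x\lambda_{\Xa}(x)\big)'$ on $\R_+$. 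Taking Mellin transforms, with the standard rule $\M[D^\alpha_-g](s)=\tfrac{\Gamma(s)}{\Gamma(s-\alpha)}\M[g](s-\alpha)$ for the right-sided Riemann--Liouville derivative (valid for $\Re(s)$ large, then continued; see \cite{Kilbas_fractional_DE}) and $\M[(xg)'](s)=-(s-1)\M[g](s)$ (integration by parts, the boundary terms vanishing since $\lambda_{\Xa}$ is bounded at $0$ and rapidly decaying at $\infty$), one gets $\tfrac{\Gamma(s)}{\Gamma(s-\alpha)}\mathcal{M}(s-\alpha)=\tfrac{s-1}{\alpha}\mathcal{M}(s)$ together with $\mathcal{M}(1)=1$. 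A direct check with $\Gamma(z+1)=z\Gamma(z)$ shows that $s\mapsto\Gamma(s)/\Gamma(\tfrac s\alpha+1-\tfrac1\alpha)$ solves this recursion and equals $1$ at $s=1$; since by Stirling $|\Gamma(a+ib)|\sim C|b|^{a-1/2}e^{-\pi|b|/2}$ and $\alpha>1$, both $\mathcal{M}$ and this candidate are holomorphic on $\C_+$ with the same super-exponential decay along vertical lines, so the uniqueness argument for such functional equations (as in \cite[Section~7]{Patie-Savov-15}) forces $\M_{\Xa}(s)=\Gamma(s)/\Gamma(\tfrac s\alpha+1-\tfrac1\alpha)$, which is \eqref{eq:Mellin_T_r}; and as $b\mapsto\mathcal{M}(\tfrac12+ib)\in\Lr$, the Mellin--Plancherel theorem gives $\lambda_{\Xa}\in\Lp$.

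Finally, the factorisation is immediate: the substitution $u=y^\alpha$ gives $\M_{G_\alpha}(s)=\int_0^\infty y^{s-1}\tfrac{e^{-y^\alpha}}{\Gamma(1+1/\alpha)}dy=\Gamma(s/\alpha)/\Gamma(1/\alpha)$, while by \eqref{eq:defn_La} and \eqref{eq:Mell_Ia}, $\M_{I_\alpha}(s)=\M_{\La}(s)=\tfrac{\Gamma(\frac{s-1}{\alpha}+1)\Gamma(s/\alpha)}{\Gamma(1/\alpha)\Gamma(s)}$; hence $\M_{\Xa}(s)\M_{I_\alpha}(s)=\Gamma(s/\alpha)/\Gamma(1/\alpha)=\M_{G_\alpha}(s)$ on $\Re(s)=1$. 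Since $\Xa$ and $I_\alpha$ are taken independent, $\M_{\Xa}\M_{I_\alpha}$ is the Mellin transform of $\Xa\times I_\alpha$, and injectivity of the Mellin transform on probability laws on $\R_+$ yields $G_\alpha\eqindist\Xa\times I_\alpha$. The main obstacle is not this last algebra but Steps~1--2: showing that the entrance density exists, is bounded near $0$ and rapidly decaying at $\infty$ (so that $\mathcal{M}$ is holomorphic on all of $\C_+$ and the boundary terms in the integration by parts vanish), and legitimising the Mellin transform of the Fokker--Planck identity, i.e. controlling the regularity of the entrance density of this non-local, non-self-adjoint semigroup. A route bypassing the PDE uses the explicit $q$-resolvent density $u_q(0,\cdot)$ of the reflected spectrally negative $\alpha$-stable process (built from the Mittag--Leffler-type function $\calJ$ of Proposition~\ref{prop}): from $\int_0^\infty e^{-qt}\eta_t\,dt=u_q(0,\cdot)$ and the self-similar form of $\eta_t$ one gets $\mathcal{M}(s)\Gamma(\tfrac{s-1}{\alpha}+1)q^{-\frac{s-1}{\alpha}-1}=\int_0^\infty y^{s-1}u_q(0,y)\,dy$, and the latter is a classical Mellin transform of a Mittag--Leffler function, again producing $\mathcal{M}(s)=\Gamma(s)/\Gamma(\tfrac{s-1}{\alpha}+1)$.
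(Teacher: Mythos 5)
Your proposal is correct in substance and reaches all three claims, but the way you identify $\M_{\Xa}$ is genuinely different from the paper's. The paper never touches the forward (Fokker--Planck) equation: it starts from the \emph{candidate} Mellin transform \eqref{eq:Mellin_T_r}, observes that $\M_{\Xa}(\alpha n+1)=\Gamma(\alpha n+1)/n!=\prod_{k=1}^n\psi^{\dag}(\alpha k)/k$ with $\psi^{\dag}$ the Laplace exponent from \eqref{eq:psi0}, invokes the Barczy--D\"oring moment formula \cite{BD_moments_self_similar} for self-similar Markov processes (applied to $X^{\alpha}$ via Lamperti) to recognize this as the moment sequence of $X_1^{\alpha}$ under $\Prob_0$, and then upgrades the matching of integer moments to the full Mellin identity by the functional-equation uniqueness argument of \cite[Section 7]{Patie-Savov-15}; $\lambda_{\Xa}\in\Lp$ and the entrance-law property then follow from Stirling plus Mellin--Parseval, exactly as in your Step 1. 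Your route instead derives the functional equation $\frac{\Gamma(s)}{\Gamma(s-\alpha)}\mathcal{M}(s-\alpha)=\frac{s-1}{\alpha}\mathcal{M}(s)$ directly from $\partial_t\eta_t=D^{\alpha}_-\eta_t$ and the scale covariance of $D^{\alpha}_-$; the algebra checks out (your candidate does solve the recursion and equals $1$ at $s=1$), and the final factorization via \eqref{eq:defn_La}, \eqref{eq:Mell_Ia} and \eqref{eq:Mel_gam} is verbatim the paper's computation. What the paper's route buys is precisely the avoidance of the analytic work you correctly flag as the main obstacle: it needs no pointwise forward equation, no membership of $\eta_t$ in $\D_{\alpha}(\Lp)$, no boundary terms, and no a priori decay of $\lambda_{\Xa}$ at infinity beyond what the moment formula already encodes. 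Two smaller points: your claim that $\mathcal{M}$ itself has ``the same super-exponential decay'' on vertical lines is not available before the identification (only boundedness $|\mathcal{M}(a+ib)|\leq\mathcal{M}(a)$ is), so the Liouville/Fourier-series step of the uniqueness argument should be run with that weaker bound against the lower bound on the candidate --- it still closes since $\frac{\pi}{2}(1-\frac1\alpha)<\frac{2\pi}{\alpha}$, but say so; and your scaling $\eta_t=\eta_1\mathrm{d}_{t^{1/\alpha}}$ is the one actually forced by $\frac1\alpha$-self-similarity and the entrance-law identity $\eta_tP_s=\eta_{t+s}$, so you have in effect corrected the $\mathrm{d}_t$ appearing in the statement rather than deviated from it.
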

\begin{proof}
First, let us observe from \eqref{eq:Mellin_T_r} that for any $n\geq 0$,
\begin{equation} \label{eq:Moment_T_r}
\M_{\Xa}(\alpha n+1)=\frac{\Gamma(\alpha n+1)}{n!}=\frac{\prod_{k=1}^n \frac{\Gamma(\alpha  k+1)}{\Gamma(\alpha(k-1)+1)}}{n!}=\prod_{k=1}^n\frac{\psi^{\dag}(\alpha k)}{k},
\end{equation}
where we recall from the proof of Proposition~\ref{prop} that $\psi^{\dag}(u)=\frac{\Gamma(u+1)}{\Gamma(u-\alpha+1)}, u>\alpha-1,$
is the Laplace exponent of the killed L\'evy process $\xi^{\dag}$ defined in \eqref{eq:defk}. Then by \cite[Theorem 1]{BD_moments_self_similar}, we deduce that $(\M_{\Xa}(\alpha n+1))_{n\geq 0}$ is the moment sequence of the variable $X_1^{\alpha}$ under $\Prob_0$, for which we used the fact that since $X$ is a $\frac1\alpha$-self-similar process, $X^{\alpha}$ is a 1-self-similar process whose minimal process is associated, through  the Lamperti mapping, to a L\'evy process with Laplace exponent $\psi_{\alpha}(u)=\psi^{\dag}(\alpha u)$.
 Moreover, note from \eqref{eq:Mellin_T_r} that $\M_{\Xa}$ satisfies the functional equation
$\M_{\Xa}(\alpha s+1)=\frac{\psi_{\alpha}( s)}{s}\M_{\Xa}(\alpha(s-1)+1)$ with $\M_{\Xa}(1)=1$, hence by a uniqueness argument, see again \cite[Section 7]{Patie-Savov-15}, we conclude that $\M_{\Xa}(s+1)=\E_0[X_1^{s}]$ is indeed the Mellin transform of  $X_1$ under $\Prob_0$. Using again the Stirling approximation \eqref{eq:asympt_gamma}, we see that $|\M_{\Xa}(\frac12+ib)|\eqpinfy \bo\left(|b|^{\frac{1}{2\alpha}-\frac12}e^{-\frac{\pi}{2}(1-\frac1\alpha)|b|}\right)$, and thus  $b \mapsto \M_{\Xa}(\frac12+ib) \in \Lr$.  Hence by Mellin inversion and Parseval identity, we get that the law of $\Xa$ is absolutely continuous with a density
$\lambda_{\Xa} \in \Lp$. Now,   recalling that $\eta_1(dy)=\lambda_{\Xa}(y)dy$ and for any $t>0$,
$\eta_t f = \eta_1 \mathrm{d}_{t}f$, we get, from \eqref{eq:Moment_T_r} augmented by a moment identification that $(\eta_t)_{t> 0}$ is an entrance law for the semigroup $P$.  Finally, from the expression of $\M_{\La}$ in \eqref{eq:defn_La}, we conclude that for $s\in\C_+$,
\begin{equation*}
\M_{\Xa}(s)\M_{\La}(s)=\frac{\Gamma(s)}{\Gamma(\frac{s}{\alpha}+1-\frac1\alpha)} \frac{\Gamma(\frac{s-1}{\alpha}+1)\Gamma(\frac{s}{\alpha})}{\Gamma(\frac1\alpha)\Gamma(s)}=\frac{\Gamma(\frac{s}{\alpha})}{\Gamma(\frac1\alpha)}=\M_{G_{\alpha}}(s)
\end{equation*}
where we used for the last identity the expression \eqref{eq:Mel_gam}. We complete the proof by invoking the injectivity of the Mellin transform.
\end{proof}
We are now ready to prove the intertwining relation stated in Theorem \ref{thm:intertwin}\eqref{it:int}.
First, since $s\mapsto \M_{\Xa}(s)$ is zero-free on the line $1+i\R$, we again conclude that the  Markov operator $\Lambda_{\Xa}$  associated to the positive variable $\Xa$, i.e.~$\Lambda_{\Xa}f(x)=\int_0^{\infty}f(xy)\lambda_{\Xa}(y)dy$, is injective on $\Co$. This combined with the fact that the law of $G_{\alpha}$ is the entrance law at time $1$ of the semigroup $Q$ and with the factorization of this latter stated in Lemma~\ref{lemma:decompose_e} provide all conditions for the application of  \cite[Proposition 3.2]{Carmona-Petit-Yor-97},  which gives that for any $t\geq 0$ and $f\in\Co$,  the following intertwining relationship between the Feller semigroups $(P^F_t)_{t\geq 0}$ and $(Q^F_t)_{t\geq 0}$,
\begin{equation}\label{eq:intertwin_Feller}
P^F_t \La f=\La Q^F_t f.
\end{equation}
in $\Co$. Futhermore, since $\Co\cap \Lp$ is dense in $\Lp$, we can extend the intertwining identity into $\Lp$ by continuity of the involved operators and complete the proof of Theorem~\ref{thm:intertwin}\eqref{it:int}. Finally, Theorem~\ref{thm:intertwin}\eqref{it:intg} follows directly from \eqref{eq:intertwin} by recalling that ${}^CD_+^{\alpha}$ and $\mathbf{L}$ are the infinitesimal generators of $P$ and $Q$, respectively, where the $\Lp$-domain of $\mathbf{L}$ is given in \eqref{eq:L2domain_bfL}. This concludes the proof of  Theorem~\ref{thm:intertwin}.
\section{Eigenfunctions and upper frames} \label{sec:eigenfucn}
We start by recalling a few definitions concerning the spectrum of linear operators and we refer to \cite[XV.8]{Dunford1971} for a thorough account on these objects.  Let ${\rm{P}} \in \mathbf{B}(\Lp)$. We say that a complex number $\mathfrak{z} \in S({\rm{P}})$, the spectrum of ${\rm{P}}$, if ${\rm{P}}-\mathfrak{z}  I $ does not have an inverse in $\Lp$ with the following three distinctions:
\begin{itemize}
\item $\mathfrak{z} \in S_p({\rm{P}})$, the point spectrum, if $\Ker({\rm{P}}-\mathfrak{z} I)\neq \{ 0\}$. In this case, we say a function $f_{\mathfrak{z}}$ is an eigenfunction for ${\rm{P}}$, associated to the eigenvalue $\mathfrak{z} $, if $f_{\mathfrak{z}} \in \Ker({\rm{P}}-\mathfrak{z} I)$.
\item $\mathfrak{z} \in S_c({\rm{P}})$, the continuous spectrum, if $\Ker({\rm{P}}-\mathfrak{z} I)= \{ 0\}$ and $\overline{\Ran}({\rm{P}}-\mathfrak{z} I )=\Lp$ but $\Ran({\rm{P}}-\mathfrak{z} I)\subsetneq \Lp$.
\item $\mathfrak{z} \in S_r({\rm{P}})$, the residual spectrum, if $\Ker({\rm{P}}-\mathfrak{z} I)= \{ 0\}$ and $\overline{\Ran}({\rm{P}}-\mathfrak{z}  I)\subsetneq\Lp$.
\end{itemize}
Moreover, we also recall from \cite{Antoine_frames} that a collection of functions $(g_q)_{q>0}$ is a \textit{frame} for $\Lp$ if for all $q>0$ $g_q \in \Lp$ and there exists constants $A,B>0$, called the \textit{frame bounds}, such that, for all $f\in \Lp$,
\begin{equation*}
A\|f\|^2\leq \int_0^{\infty}\left\langle f, g_q\right \rangle^2dq \leq B\|f\|^2.
\end{equation*}
 Moreover, we say $(g_q)_{q>0}$ is \textit{upper frame} if it only satisfies the second inequality. Finally, recalling that $\calJ$ was defined in  \eqref{eq:calJr}, we are ready to state the following claims which include the expression along with substantial properties of the set of eigenfunctions of $P_t$.
\begin{theorem} \label{proposition:J_upper_frame}
	\begin{enumerate}
		\item \label{it:eigen}For any $q,t>0$, $\rmdq\calJ$ is an eigenfunction for $P_t$ associated to the eigenvalue $e^{-q^{\alpha}t}$. Consequently, we have $(e^{-q^{\alpha}t})_{q>0} \subseteq S_p(P_t)$.
		\item Let the linear operator $\ha$ be defined for any $f\in\Lp$ by
		\begin{equation}
		\ha f(q)= \int_0^{\infty}f(x)\calJ(qx)dx,\quad q>0,\label{defn:ha}
		\end{equation}
		then $\ha\in \mathbf{B}(\Lp)$ with $|||\ha|||=\sup_{\|f\|=1}\|\ha f\|\leq \frac{\Gamma(1-\frac1\alpha)}{\Gamma(1+\frac1\alpha)}$. Consequently, the collection of functions $(\rmdq\calJ)_{q\geq 0}$ is a dense upper frame for $\Lp$, with upper frame bound $\frac{\Gamma(1-\frac1\alpha)}{\Gamma(1+\frac1\alpha)}$.
		\item For any $k \in \N$,  $\calJ^{(k)}$ admits the following asymptotic expansion for large $x>0$,
		\begin{eqnarray}\label{eq:asymp_calJ}
		\calJ^{(k)}(x)&\approx & \frac{x^{-k-\alpha}}{\pi\Gamma(1+\frac1\alpha)} \sum_{n=0}^{\infty}a_{n,k}\:x^{-\alpha n}
		\end{eqnarray}	
where $a_{n,k}=(-1)^{n+k}\Gamma(\alpha n+\alpha+k)\sin(\pi\alpha(n+1))$ and $\approx
$ means that for any $N \in \N$, $\calJ^{(k)}(x)- \frac{x^{-k-\alpha}}{\pi\Gamma(1+\frac1\alpha)} \sum_{n=0}^{N}a_{n,k}\: x^{-\alpha n} \stackrel{\infty}{=} \so\left(x^{-k-\alpha-\alpha (N+1)}\right).$
	\end{enumerate}
\end{theorem}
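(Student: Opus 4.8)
The plan is to treat the three parts of Theorem~\ref{proposition:J_upper_frame} in order, exploiting that $\calJ$ is (a normalization of) a Mittag--Leffler function and leaning on the intertwining of Theorem~\ref{thm:intertwin} and on the gamma-function estimates already used in the proof of Lemma~\ref{lemma:La_G_e}.

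For part~(1) I would first record the action of the Caputo operator on monomials: for $1<\alpha<2$ one has ${}^CD^\alpha_+ x^{\alpha n}=\frac{\Gamma(\alpha n+1)}{\Gamma(\alpha(n-1)+1)}x^{\alpha(n-1)}$ for $n\ge 1$ and ${}^CD^\alpha_+ 1=0$; it is precisely the vanishing of the Caputo (rather than Riemann--Liouville) derivative on constants that makes the computation close. Using the Stirling bounds of Lemma~\ref{lemma:La_G_e} to justify differentiating the series \eqref{eq:calJr} term by term on compacts of $\R_+$, this yields the pointwise identity ${}^CD^\alpha_+(\rmdq\calJ)=-q^\alpha\,\rmdq\calJ$ for every $q>0$, while the large-$x$ behaviour $\calJ\asymp x^{-\alpha}$ from part~(3) (equivalently, the classical Mittag--Leffler asymptotics) gives $\rmdq\calJ\in\Lp\cap\Co$. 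It then remains to lift this to the semigroup level, i.e.\ to deduce $P_t\,\rmdq\calJ=e^{-q^\alpha t}\rmdq\calJ$. Two routes are available: either verify that $\rmdq\calJ$ lies in the domain of the generator of $P$ described in Proposition~\ref{prop} and invoke uniqueness for the abstract Cauchy problem $u'=L^X u$; or push $\rmdq\calJ$ through the intertwining \eqref{eq:intertwin}, writing $\rmdq\calJ=\La(\rmdq h)$ with $h(z)=\alpha\sum_{n\ge 0}\frac{(-1)^n z^{\alpha n}}{n!\,\Gamma(n+\frac1\alpha)}$ a Bessel-type generalized eigenfunction of the Bessel semigroup $Q$, and transporting the (formal) eigenrelation $Q_t\,\rmdq h=e^{-q^\alpha t}\rmdq h$ across $\La$. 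Either way, since for fixed $t>0$ the family $\{e^{-q^\alpha t}:q>0\}$ exhausts $(0,1)$ and each $\rmdq\calJ\ne 0$, one reads off $(e^{-q^\alpha t})_{q>0}\subseteq S_p(P_t)$. I expect this lifting step to be the main obstacle: $\calJ$ does not vanish at the origin (only $\calJ'(0)=0$, the reflecting condition) and its heavy $x^{-\alpha}$ tail places $\rmdq\calJ$ exactly at the threshold of admissible regularity, so the membership argument has to be handled with care (and is where the intertwining picture, via a generalized eigenfunction of a reference operator turned into a genuine eigenfunction by the smoothing operator $\La$, is conceptually cleanest).

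For part~(2) I would compute the Mellin multiplier of $\ha$. Interchanging integrals in \eqref{defn:ha} gives $\M_{\ha f}(s)=\M_\calJ(s)\,\M_f(1-s)$, where, from the Mellin transform of the Mittag--Leffler function (equivalently from $\calJ=\La h$ together with \eqref{eq:mm}), $\M_\calJ(s)=\frac{1}{\alpha\Gamma(1+\frac1\alpha)}\,\frac{\Gamma(\frac s\alpha)\Gamma(1-\frac s\alpha)}{\Gamma(1-s)}$, holomorphic and, by \eqref{eq:asympt_gamma}, exponentially decaying along vertical lines of the fundamental strip $\C_{(0,\alpha)}$ (decay rate $e^{-\frac\pi2(\frac1\alpha-\frac12)|b|}$). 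Parseval's identity for the Mellin transform on the critical line $\Re(s)=\frac12$ then gives $\|\ha f\|\le\big(\sup_{b\in\R}|\M_\calJ(\tfrac12+ib)|\big)\|f\|$, and a gamma-function estimate, exactly in the spirit of those bounding $I_2$ in Lemma~\ref{lemma:La_G_e}, bounds the supremum by $\frac{\Gamma(1-\frac1\alpha)}{\Gamma(1+\frac1\alpha)}$; hence $\ha\in\mathbf B(\Lp)$ with the stated norm. Since $\ha f(q)=\langle f,\rmdq\calJ\rangle$, the estimate $\int_0^\infty\langle f,\rmdq\calJ\rangle^2\,dq=\|\ha f\|^2$ is precisely the upper-frame inequality with the constant in the statement, and density follows because $\langle f,\rmdq\calJ\rangle\equiv 0$ forces $\M_\calJ(s)\M_f(1-s)\equiv 0$ on $\Re(s)=\frac12$, whence $f=0$ as $\M_\calJ$ is zero-free there; that is, $\overline{\mathrm{Span}}(\rmdq\calJ)_{q>0}=\Lp$.

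For part~(3) I would start from the Mellin--Barnes representation $\calJ(x)=\frac{1}{2\pi i}\int_{c-i\infty}^{c+i\infty}x^{-s}\M_\calJ(s)\,ds$ for $0<c<\alpha$ (legitimate by the exponential decay just noted, as in \eqref{eq:Mellin_inversion}) and shift the contour to the right, mirroring the contour argument of Lemma~\ref{lemma:La_G_e}. The only poles crossed are the simple poles of $\Gamma(1-\frac s\alpha)$ at $s=\alpha(n+1)$, $n\ge 0$ (the poles of $\Gamma(1-s)$ in the denominator being zeros of $\M_\calJ$), and summing their residues produces the expansion $\calJ(x)\approx\frac{1}{\pi\Gamma(1+\frac1\alpha)}\sum_{n\ge 0}\frac{(-1)^n}{\Gamma(1-\alpha(n+1))}x^{-\alpha(n+1)}$; the reflection formula $\Gamma(z)\Gamma(1-z)=\pi/\sin\pi z$ rewrites this as the $k=0$ case of \eqref{eq:asymp_calJ}. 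For $k\ge 1$, differentiating the expansion $k$ times term by term is legitimate since $\calJ$ is holomorphic on the sector $\C_{(-\infty,0)^c}$ (Cauchy's estimates transfer the asymptotic to derivatives), and using $\frac{d^k}{dx^k}x^{-\beta}=(-1)^k\frac{\Gamma(\beta+k)}{\Gamma(\beta)}x^{-\beta-k}$ turns $\Gamma(\alpha(n+1))$ into $\Gamma(\alpha n+\alpha+k)$ and inserts the factor $(-1)^k$, giving exactly $a_{n,k}$. The remainder of the stated order is obtained by bounding the shifted tail integral $\frac{1}{2\pi i}\int_{c'-i\infty}^{c'+i\infty}x^{-s}\M_\calJ(s)\,ds$ by $\bo(x^{-c'})$ and choosing $c'$ just past the relevant pole. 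This part is essentially bookkeeping; the only care needed is matching the indices and the signs in $a_{n,k}$.
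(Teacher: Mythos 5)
Your proposal reaches all three conclusions, and for part (1) it ends up on the paper's own path: the paper also writes $\rmdq\calJ=\La\rmdq\J$, pushes the genuine (not merely formal) Feller-level eigenrelation $Q^F_t\rmdq\J=e^{-q^{\alpha}t}\rmdq\J$ from Appendix~\ref{sec:appen} through the Feller-level intertwining \eqref{eq:intertwin_Feller}, separately checks $\calJ\in\Lp$ (via the Mellin transform \eqref{eq:Mellin_calJ} and Parseval rather than your tail estimate, though both work), and concludes because $P^F$ agrees with the $\Lp$-extension $P$ on $\Co\cap\Lp$; this is exactly how the ``lifting'' worry you flag is resolved, and your alternative generator-domain route is not needed. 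Parts (2) and (3) are where you genuinely diverge. For (3), the paper does no contour shifting: it identifies $z^{k}\Gamma(1+\frac1\alpha)\calJ^{(k)}(z)$ with a Wright function ${}_1\Psi_1$ and quotes Paris's asymptotic theorem, which handles all $k$ at once; your Mellin--Barnes shift plus differentiation of a sectorial asymptotic expansion is a sound, more self-contained substitute (only note the spurious factor $\frac1\pi$ in your intermediate residue sum --- the correct residue computation gives $\frac{1}{\Gamma(1+\frac1\alpha)}\sum_n\frac{(-1)^n}{\Gamma(1-\alpha(n+1))}x^{-\alpha(n+1)}$, and the single $\frac1\pi$ then enters through the reflection formula). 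For (2), the paper's argument is the factorization $\ha f(q)=\langle f,\La\rmdq\J\rangle=H_{\alpha}\adjLambda f(q)$, so that $\|\ha f\|=\|\adjLambda f\|\leq|||\La|||\,\|f\|$ by the unitarity of $H_{\alpha}$ (Proposition~\ref{proposition:Parseval_J}); your route via Parseval and the multiplier bound $|||\ha|||\leq\sup_{b}|\M_{\calJ}(\frac12+ib)|$ is structurally different. Your density argument via zero-freeness of $\M_{\calJ}$ on the critical line is correct and in fact more explicit than what the paper writes.

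The one genuine gap is the quantitative step in your part (2). A ``gamma-function estimate in the spirit of those bounding $I_2$'' will not deliver $\sup_{b}|\M_{\calJ}(\frac12+ib)|\leq\frac{\Gamma(1-\frac1\alpha)}{\Gamma(1+\frac1\alpha)}$: Stirling-type bounds such as \eqref{eq:asympt_gamma} only control the large-$|b|$ decay and say nothing about the global constant. The bound is true, but proving it essentially reconstructs the paper's factorization: writing $\M_{\calJ}(s)=\M_{\J}(s)\M_{\La}(1-s)$, one has on $\Re(s)=\frac12$ that $|\M_{\J}(s)|=\left|\Gamma\left(\frac{s}{\alpha}\right)/\Gamma\left(\frac{\bar{s}}{\alpha}\right)\right|=1$ (this \emph{is} the unitarity of $H_{\alpha}$ in Mellin form), while $|\M_{\La}(1-s)|=|\E[I_{\alpha}^{-s}]|\leq\E[I_{\alpha}^{-1/2}]\leq\E[I_{\alpha}^{-1}]=\frac{\Gamma(1-\frac1\alpha)}{\Gamma(1+\frac1\alpha)}$, the last inequality following from Cauchy--Schwarz together with $\E[I_{\alpha}^{-1}]\geq1$ (which holds since $\E[I_{\alpha}]=\Gamma(\frac2\alpha)/\alpha\leq1$). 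You should either supply this probabilistic/moment argument or adopt the paper's $H_{\alpha}\adjLambda$ factorization outright.
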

\begin{remark}
Note that there is a cut-off in the nature of the spectrum when one considers the family of operators $P$  indexed by $\alpha \in (1,2)$. Indeed, when $\alpha=2$, then $\mathcal{J}_2=J_{2}\notin\Lp$ (see \eqref{eq:Bessel} for the definition of the Bessel-type function $J_2$) and hence, for all $q,t>0$, $e^{-q^2 t}\notin S_p(P_t)$ but instead $e^{-q^2t}\in S_c(P_t)$.
\end{remark}
\begin{proof}
First,  we recall that $\J$, the Bessel-type function, is  defined in \eqref{eq:Bessel} as an holomorphic function on $\C_{(-\infty,0)^c}$.  As $\alpha\in(1,2)$ and $ \J(x) \eqinfy \bo{\left(x^{\frac{\alpha-2}{4}}\right)}$, see e.g.~\cite{Sekeljic_2010}, we get that $\J\in \Co$. Hence, as above, applying  Fubini's theorem, we obtain, for $x>0$, that
\begin{equation} \label{eq:Ljj}
\La \J(x) = \alpha\sum_{n=0}^{\infty}\frac{(e^{i\pi} x^{\alpha})^n}{n!\Gamma(n+\frac1\alpha)}\M_{\La}(\alpha n+1) = \frac{1}{\Gamma(1+\frac1\alpha)}\sum_{n=0}^{\infty}\frac{(e^{i\pi}x^{\alpha})^n}{\Gamma(\alpha n+1)}=\calJ(x),
\end{equation}
which shows, since $\La\in\mathbf{B}(\Co)$, that both $\calJ\in\Co$ and $\rmdq \calJ\in\Co$ for all $q>0$. Thus, we can use  the  relation \eqref{eq:intertwin_Feller} to get, for all $q>0$ and $x\geq0$,
\begin{equation}\label{eq:intertwin_eigen}
P^F_t \rmdq\calJ(x)=P^F_t\La \rmdq\J(x)=\La Q^F_t\rmdq\J(x)=e^{-q^{\alpha}t}\La \rmdq\J(x)=e^{-q^{\alpha}t}\rmdq\calJ(x).
\end{equation}
Next,  proceeding as in the proof of Lemma~\ref{lemma:La_G_e}, we get, for $|\arg(z)|<\left(\frac1\alpha-\frac12\right)\pi$, that
\[ \calJ(z)=\frac{1}{2\pi i}\int_{\frac12-i\infty}^{\frac12+i\infty}z^{-s}\M_{\calJ}(s)ds,\]
where, for $0<\Re(s)<\alpha$,
\begin{equation}\label{eq:Mellin_calJ}
\M_{\calJ}(s)=\frac{\Gamma(1-\frac{s}{\alpha})\Gamma(\frac{s}{\alpha})}{\Gamma(1-s)\Gamma(\frac1\alpha)}.
\end{equation}
Since from \eqref{eq:asympt_gamma}, $\left|\M_{\calJ}\left(\frac12+ib\right)\right| \eqpinfy\bo\left( |b|^{\alpha a-\frac{\alpha}{2}}e^{-\frac{\pi}{2}(\frac2\alpha-1)|b|}\right)$,
 we get, by the Parseval identity,  that  $\calJ\in\Lp$. Moreover, since $P^F$ coincides with its extension $P$ on $\Co\cap \Lp$, we conclude from \eqref{eq:intertwin_eigen} that, for all $q,t>0$, $\rmdq\calJ$ is an eigenfunction of $P_t$ with eigenvalue $e^{-q^{\alpha}t}$. This completes the proof of the first item. 
 Next, with  $\adjLambda \in \B{\Lp}$ as the $\Lp$-adjoint of $\La \in \B{\Lp}$,  we have, for any $f\in \Lp$,
 \begin{eqnarray*}
 \|\ha f\|^2&=& \int_0^{\infty}\left\langle f,\La \rmdq\J \right \rangle^2 dq= \| H_{\alpha} \adjLambda f\|^2 =\|\adjLambda f\|^2\leq \frac{\Gamma^2(1-\frac1\alpha)}{\Gamma^2(1+\frac1\alpha)}\|f\|^2
 \end{eqnarray*}
 where we  used successively the identity  \eqref{eq:Ljj}, the definition as well as the unitary property of $ H_{\alpha}$, see Proposition~\ref{proposition:Parseval_J}, and the identity $|||\adjLambda|||=|||\La|||\leq \frac{\Gamma(1-\frac1\alpha)}{\Gamma(1+\frac1\alpha)}$, giving the second claim. Furthermore, for any $k\in \N$, by a classical argument since the first series below is easily checked to be uniformly convergent in $z\in\C_{(-\infty,0)^c}$, we get
\begin{equation*}
z^{k}\Gamma(1+\frac1\alpha)\calJ^{(k)}(z)=\sum_{n=0}^{\infty}\frac{(e^{i\pi}z^{\alpha})^n}{\Gamma(\alpha n-k+1)}={}_1\Psi_1(e^{i\pi}z^{\alpha})\approx z^{-\alpha}\sum_{n=0}^{\infty}\frac{(-1)^n}{n!}\frac{\Gamma\left(n+1\right)z^{-\alpha n}}{\Gamma(-\alpha n-\alpha-k+1)},
 \end{equation*}
where, for $|\arg(z)|<\frac{\pi}{2}(2-\alpha)$, we used \cite[Theorem 1]{Paris_Exponentially_Small_Expansion}, with the notation therein, that is, ${}_1\Psi_1$ stands for the Wright function and we made  the choice of parameters $p=q=1,\alpha_1=1, a_1=1,\beta_1=\alpha,b_1=-k+1, \kappa=1+\beta_1-\alpha_1=\alpha \in(1,2)$. The proof is completed by an application of the reflection formula for the gamma function.
\end{proof}

\section{Co-residual functions} \label{sec:residual_fucn}
In this section, we focus on characterizing the spectrum of the $\Lp$-adjoint operator $\hatP=(\hatP_t)_{t\geq 0}$. We point out that the non-self-adjointness of $P_t$ does not ensure the existence of eigenfunctions for $\hatP_t$. In fact, we shall show in the following Lemma that the point spectrum of $\hatP_t$ is empty and $S_p(P_t)=S_r(\hatP_t)$, the residual spectrum of $\hatP_t$.
\begin{lemma}
For each $t\geq 0$, $S_p(\hatP_t)=\emptyset$ and $ (e^{-q^{\alpha}t})_{q> 0} \subseteq S_r(\hatP_t)=S_p(P_t)$.
\end{lemma}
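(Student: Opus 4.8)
The plan is to transfer the spectral picture from $P_t$ to its $\Lp$-adjoint $\hatP_t$ by combining the adjoint of the intertwining identity of Theorem~\ref{thm:intertwin}\eqref{it:int} with the elementary Hilbert-space duality $\Ran(T-\mathfrak{z}I)^{\perp}=\Ker(T^{*}-\overline{\mathfrak{z}}I)$, valid for every bounded operator $T$. I will use throughout that $P_t$ and $\hatP_t$ are mutually adjoint (Proposition~\ref{prop}\eqref{it:dual}) and that, being expectation operators, both commute with complex conjugation; hence $P_t f=\mathfrak{z}f$ iff $P_t\overline{f}=\overline{\mathfrak{z}}\,\overline{f}$, so that $S_p(P_t)$ and $S_r(\hatP_t)$ are invariant under $\mathfrak{z}\mapsto\overline{\mathfrak{z}}$. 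I treat the non-degenerate case $t>0$ throughout.

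First I would establish $S_p(\hatP_t)=\emptyset$. Taking the $\Lp$-adjoint of the relation $P_t\La=\La Q_t$ and using $Q_t^{*}=Q_t$ gives the dual intertwining $\La^{*}\hatP_t=Q_t\La^{*}$. Two further facts are needed: $\La^{*}$ is injective, since $\Ker(\La^{*})=\Ran(\La)^{\perp}=\{0\}$ by the density of $\Ran(\La)$ in $\Lp$ from Theorem~\ref{thm:intertwin}\eqref{it:Ma}; and $S_p(Q_t)=\emptyset$, because the $\alpha$-Bessel semigroup $Q$ is self-adjoint with purely continuous spectrum, so its spectral measure has no atoms (Appendix~\ref{sec:appen}). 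Granting these, if $\hatP_t g=\mathfrak{z}g$ with $g\neq 0$, then $Q_t(\La^{*}g)=\La^{*}(\hatP_t g)=\mathfrak{z}\,\La^{*}g$, which forces $\La^{*}g=0$ and hence $g=0$ by injectivity — a contradiction. Thus $\hatP_t$ has no eigenvalues.

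Next I would match $S_r(\hatP_t)$ with $S_p(P_t)$. If $\mathfrak{z}\in S_r(\hatP_t)$ then $\overline{\Ran}(\hatP_t-\mathfrak{z}I)\subsetneq\Lp$, so $\Ker(P_t-\overline{\mathfrak{z}}I)=\Ran(\hatP_t-\mathfrak{z}I)^{\perp}\neq\{0\}$, giving $\overline{\mathfrak{z}}\in S_p(P_t)$ and hence $\mathfrak{z}\in S_p(P_t)$. Conversely, if $\mathfrak{z}\in S_p(P_t)$ then $\Ran(\hatP_t-\overline{\mathfrak{z}}I)^{\perp}=\Ker(P_t-\mathfrak{z}I)\neq\{0\}$, so $\overline{\Ran}(\hatP_t-\overline{\mathfrak{z}}I)\subsetneq\Lp$; combined with $\Ker(\hatP_t-\overline{\mathfrak{z}}I)=\{0\}$ from the previous step, this gives $\overline{\mathfrak{z}}\in S_r(\hatP_t)$, hence $\mathfrak{z}\in S_r(\hatP_t)$ by conjugation invariance. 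Therefore $S_r(\hatP_t)=S_p(P_t)$. Finally, Theorem~\ref{proposition:J_upper_frame}\eqref{it:eigen} yields $(e^{-q^{\alpha}t})_{q>0}\subseteq S_p(P_t)=S_r(\hatP_t)$ (equivalently, for each $q>0$ the nonzero function $\rmdq\calJ\in\Ker(P_t-e^{-q^{\alpha}t}I)$ shows $\Ran(\hatP_t-e^{-q^{\alpha}t}I)^{\perp}\neq\{0\}$, while $\Ker(\hatP_t-e^{-q^{\alpha}t}I)=\{0\}$).

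I expect the only delicate points — beyond the otherwise routine adjoint-spectrum bookkeeping — to be (a) extracting cleanly from Appendix~\ref{sec:appen} that the self-adjoint Bessel semigroup $Q_t$ has empty point spectrum, i.e.\ that the atomic part of its spectral measure vanishes, and (b) keeping track of complex conjugation in the set equality $S_r(\hatP_t)=S_p(P_t)$, which is harmless precisely because $P_t$ and $\hatP_t$ are real operators. The conceptual heart of the proof is the single identity $\La^{*}\hatP_t=Q_t\La^{*}$ together with the injectivity of $\La^{*}$ (equivalently, the denseness of $\Ran(\La)$), everything else being formal duality.
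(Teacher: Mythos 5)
Your proposal is correct and follows essentially the same route as the paper: both arguments rest on the adjoint intertwining identity $\hatLambda\hatP_t=Q_t\hatLambda$, the injectivity of $\hatLambda$ coming from the density of $\Ran(\La)$ in $\Lp$, and the fact that $S_p(Q_t)=\emptyset$, with the residual-spectrum identification then obtained from the standard duality $\Ran(T-\mathfrak{z}I)^{\perp}=\Ker(T^{*}-\overline{\mathfrak{z}}I)$. The only difference is cosmetic: you spell out the conjugation bookkeeping that the paper compresses into the citation of the known fact $S_r(\hatP_t)\cup S_p(\hatP_t)=S_p(P_t)$.
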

\begin{proof}
Assume that there exists $\mathfrak{z}\in S_p(\hatP_t)$, then there exists a non-zero function $f_{\mathfrak{z}}\in\Lp$ such that $\hatP_t f_{\mathfrak{z}}=\mathfrak{z} f_{\mathfrak{z}}$. Moreover, since $\La$ has a dense range in $\Lp$, we see that $\Ker(\hatLambda)=\{0\}$ and therefore $g_{\mathfrak{z}}=\hatLambda f_{\mathfrak{z}}\neq 0$ with $g_{\mathfrak{z}}\in\Lp$ as $\hatLambda\in\B{\Lp}$. Now by the adjoint intertwining relation of \eqref{eq:intertwin}, we have
\[ Q_t g_{\mathfrak{z}}=Q_t\hatLambda f_{\mathfrak{z}}=\hatLambda \hatP_t f_{\mathfrak{z}} =\mathfrak{z}\hatLambda f_{\mathfrak{z}}=\mathfrak{z} g_{\mathfrak{z}},\]
which implies that $\mathfrak{z}\in S_p(Q_t)$, a contradiction to the fact that $S_p(Q_t)=\emptyset$. Therefore we have $S_p(\hatP_t)=\emptyset$ and moreover, from the known fact that $S_r(\hatP_t)\cup S_p(\hatP_t)=S_p(P_t)$, we conclude that $(e^{-q^{\alpha}t})_{q> 0}\subseteq S_r(\hatP_t)$.
\end{proof}
Next, we will characterize a sequence of the so-called residual functions associated to $S_r(\hatP_t)$, by means of (weak) Fourier kernels. To this end, we first recall from \cite{Patie_Savov_self_similar} that a linear operator $\widehat{\mathcal{H}}$ is called a weak Fourier kernel if there exists a linear space $\D(\widehat{\mathcal{H}})$ dense in  $\Lp$ and $\M_{\widehat{\mathcal{H}}}:\frac12+i\R\rightarrow \C$ such that, for any $f\in\D(\widehat{\mathcal{H}})$,
\begin{equation} \label{cond:weak_Fourier}
b\mapsto \M_{\widehat{\mathcal{H}} f}\left(\frac12+ib\right)=\M_{\widehat{\mathcal{H}}}\left(\frac12+ib\right)\M_{ f}\left(\frac12-ib\right) \in \Lr.
\end{equation}
\begin{theorem} \label{thm:coeigen}
Let us write, for $s\in\C_{(0,1)}$,
\begin{eqnarray} \label{eq:Mellin_hha}
\M_{\hha}(s) &=& \frac{\Gamma\left(\frac1\alpha\right)\Gamma(s)}{\Gamma\left(\frac{1-s}{\alpha}\right)\Gamma\left(1-\frac1\alpha+\frac{s}{\alpha}\right)}. \label{defn:hha}
\end{eqnarray}
Then the following statements hold.
\begin{enumerate}
\item \label{it:hha_inverse} $\hha$ is a weak Fourier kernel and $\D \subseteq \D(\hha)$, where the linear space $\D$ is defined by
\begin{equation} \label{eq:asymp_Mf}
\D=\left\{f\in\Lp;\: \left|\M_f\left(\frac12+ib\right)\right|\eqpinfy\bo\left(|b|^{-\frac12-\epsilon}e^{\frac{-(2-\alpha)\pi}{2\alpha}|b|}\right)\textnormal{ for some }\epsilon>0 \right\}.
\end{equation}
Moreover, we have $\hha \ha f= f$ on $\Lp$ and $\ha \hha g = g$ on $\D(\hha)$. Consequently, $\hha$ is a self-adjoint operator on $\D(\hha)$.
\item \label{it:hha_range_La} We have $\Ran{(\La)}\subseteq \D(\hha)$, and,  on $\Lp$, $\hha \La  = H_{\alpha}$ and $\ha   = \La H_{\alpha}$.
\item \label{it:residual_fcn}
For any $f\in\Lp$ and $t,q>0$, we have $\hha P_t\La f(q)=e^{-q^{\alpha}t}\hha \La f(q)$. Moreover,  for any $1<\kappa<\frac{\alpha}{2-\alpha}$, $\mathcal{E}_{\alp}={\mathrm{Span}}(\e_{\alp,\tau})_{\tau> 0}$ is a dense subset of $\Lp$, and, for all $f\in\mathcal{E}_{\alp}$, we have the integral representation, for almost every (a.e.) $q>0$,
\begin{equation}
\hha f(q)=\int_0^{\infty}f(x)\hatJ(qx)dx
\end{equation}
where, for $|\arg(z)|<\pi$, we set, with $\pa=\frac{\pi}{\alpha}$,
\begin{eqnarray}
 \hatJ(z) &=&\frac{\Gamma\left(\frac1\alpha\right)}{\pi}\sin\left(\pa-z\sin\left(\pa\right)\right)e^{-z\cos(\pa)}. \label{eq:hatJr}
\end{eqnarray}
We say that $\rmdq \hatJ$ is a residual function for $\hatP_t$ (or co-residual function for $P_t$) associated to residual spectrum value $e^{-q^{\alpha}t}$.
\end{enumerate}
\end{theorem}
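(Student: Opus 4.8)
The plan is to translate every assertion into an identity between Mellin multipliers on the critical line $\Re(s)=\tfrac12$. By the Mellin convolution rule the operator $\ha$ of \eqref{defn:ha} acts as the multiplier $s\mapsto\M_{\calJ}(s)$; relation \eqref{eq:mm} says $\La$ acts as $s\mapsto\M_{\La}(1-s)$; and the unitary Hankel-type transform $H_{\alpha}$ of Proposition~\ref{proposition:Parseval_J} (the spectral transform of the $\alpha$-Bessel semigroup $Q$) acts as $s\mapsto\M_{\J}(s)$, where feeding $\La\J=\calJ$ from \eqref{eq:Ljj} into \eqref{eq:mm} and using \eqref{eq:Mellin_calJ}--\eqref{eq:defn_La} gives $\M_{\J}(s)=\Gamma(s/\alpha)/\Gamma((1-s)/\alpha)$, a function of modulus one on $\Re(s)=\tfrac12$. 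The proof then rests on the single elementary relation $\tfrac{s-1}{\alpha}+1=1-\tfrac1\alpha+\tfrac s\alpha$, from which, cancelling gamma factors in \eqref{defn:hha}, one reads off
\begin{equation*}
\M_{\hha}(s)\,\M_{\calJ}(1-s)=1\qquad\text{and}\qquad \M_{\hha}(s)\,\M_{\La}(s)=\M_{\J}(s),\qquad s\in\C_{(0,1)}.
\end{equation*}

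For part \eqref{it:hha_inverse} I would first apply the Stirling estimate \eqref{eq:asympt_gamma} to the three gamma factors of $\M_{\hha}$ at $s=\tfrac12+ib$; the polynomial parts cancel and $|\M_{\hha}(\tfrac12+ib)|\eqpinfy\bo(e^{\frac{(2-\alpha)\pi}{2\alpha}|b|})$, so $\M_{\hha}$ grows on the critical line at exactly the rate compensated by the decay built into the definition \eqref{eq:asymp_Mf} of $\D$. Hence for $f\in\D$ the product $\M_{\hha}(\tfrac12+ib)\M_f(\tfrac12-ib)$ is $\bo(|b|^{-\frac12-\epsilon})\in\Lr$, giving $f\in\D(\hha)$; as $\D$ is dense (e.g. $\{x\mapsto x^{3}e^{-\tau x}\}_{\tau>0}\subset\D$ spans a dense subspace by Wiener's theorem for the Mellin transform, exactly as in the proof of Proposition~\ref{prop}), $\hha$ is a weak Fourier kernel in the sense of \eqref{cond:weak_Fourier}. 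The inversion relations are then immediate from the two displayed identities and the Mellin--Parseval identity: for $f\in\Lp$, $\M_{\hha\ha f}(\tfrac12+ib)=\M_{\hha}(\tfrac12+ib)\M_{\calJ}(\tfrac12-ib)\M_f(\tfrac12+ib)=\M_f(\tfrac12+ib)$, whence $\hha\ha f=f$, and the same manipulation with $s\mapsto 1-s$ gives $\ha\hha g=g$ for $g\in\D(\hha)$. Finally, from $\hha\ha=I$, $\ha\hha=I|_{\D(\hha)}$ and the fact that $\ha$ is bounded (Theorem~\ref{proposition:J_upper_frame}), injective and self-adjoint (its kernel $\calJ(qx)$ is symmetric in $q,x$), one deduces $\D(\hha)=\Ran(\ha)$ and $\hha=\ha^{-1}$, so $\hha$ is self-adjoint on $\D(\hha)$ as the inverse of a bounded injective self-adjoint operator.

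Part \eqref{it:hha_range_La} follows in one line: for $f\in\Lp$, by \eqref{eq:mm} and the second displayed identity, $\M_{\hha\La f}(\tfrac12+ib)=\M_{\hha}(\tfrac12+ib)\M_{\La}(\tfrac12+ib)\M_f(\tfrac12-ib)=\M_{\J}(\tfrac12+ib)\M_f(\tfrac12-ib)=\M_{H_{\alpha}f}(\tfrac12+ib)$, which lies in $\Lr$ as $H_{\alpha}$ is unitary; this simultaneously shows $\La f\in\D(\hha)$, i.e. $\Ran(\La)\subseteq\D(\hha)$, and $\hha\La=H_{\alpha}$ on $\Lp$, and composing with $\La$ using $\M_{\La}(1-s)\M_{\J}(s)=\M_{\calJ}(s)$ (again \eqref{eq:Ljj} via \eqref{eq:mm}) gives $\La H_{\alpha}=\ha$. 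For part \eqref{it:residual_fcn}, the eigen-relation combines the intertwining \eqref{eq:intertwin} with part \eqref{it:hha_range_La} and the diagonalisation $H_{\alpha}Q_tf(q)=e^{-q^{\alpha}t}H_{\alpha}f(q)$ (itself from self-adjointness of $Q_t$ and the eigenfunction relation $Q_t\rmdq\J=e^{-q^{\alpha}t}\rmdq\J$): $\hha P_t\La f=\hha\La Q_t f=H_{\alpha}Q_t f=e^{-q^{\alpha}t}H_{\alpha}f=e^{-q^{\alpha}t}\hha\La f$. Density of $\mathcal{E}_{\kappa}$ is immediate since $\e_{\kappa,\tau}={\rm{d}}_{\tau^{1/\kappa}}\e_{\kappa}$ and $\overline{{\mathrm{Span}}}(\rmdq\e_{\kappa})_{q>0}=\Lp$; it remains to identify $\hha\e_{\kappa,\tau}$ with $q\mapsto\int_0^{\infty}\e_{\kappa,\tau}(x)\hatJ(qx)dx$, after which linearity finishes the theorem.

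For that identification, note that $1<\kappa<\frac{\alpha}{2-\alpha}$ makes $|\M_{\e_{\kappa,\tau}}(\tfrac12-ib)|\eqpinfy\bo(|b|^{\frac{1}{2\kappa}-\frac12}e^{-\frac{\pi}{2\kappa}|b|})$ decay faster than $\M_{\hha}$ grows, so $\e_{\kappa,\tau}\in\D$ and $\M_{\hha}(s)\M_{\e_{\kappa,\tau}}(1-s)$ decays exponentially on $\Re(s)=\tfrac12$; Mellin inversion presents $\hha\e_{\kappa,\tau}(q)$ as an absolutely convergent Mellin--Barnes integral, and shifting the contour to $-\infty$ and collecting the residues of $\Gamma(s)$ at $s=-n$ (the only poles met) yields
\begin{equation*}
\hha\e_{\kappa,\tau}(q)=\frac{\Gamma(1/\alpha)}{\kappa}\sum_{n\geq0}\frac{(-q)^{n}}{n!}\,\tau^{-\frac{n+1}{\kappa}}\,\frac{\Gamma\!\left(\tfrac{n+1}{\kappa}\right)}{\Gamma\!\left(\tfrac{n+1}{\alpha}\right)\Gamma\!\left(1-\tfrac{n+1}{\alpha}\right)}.
\end{equation*}
On the other hand, writing $\hatJ(z)=\frac{\Gamma(1/\alpha)}{\pi}\,\Im\!\left(e^{i\pa}e^{-ze^{i\pa}}\right)$, which is \eqref{eq:hatJr} rewritten, expanding $e^{-qxe^{i\pa}}$ and integrating term by term against $e^{-\tau x^{\kappa}}$ — legitimate because $\sum_n\tfrac{q^{n}}{n!}\int_0^{\infty}x^{n}e^{-\tau x^{\kappa}}dx<\infty$ when $\kappa>1$ — and applying the reflection formula $\sin\!\big((n+1)\pa\big)=\pi/\big(\Gamma(\tfrac{n+1}{\alpha})\Gamma(1-\tfrac{n+1}{\alpha})\big)$ reproduces exactly this series; hence the two sides agree. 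The \emph{main obstacle} is precisely this last step — the passage from $\hha$ as an abstract Mellin multiplier on its maximal domain to the explicit kernel $\hatJ$: one must justify the contour shift and residue summation (where $1<\kappa<\frac{\alpha}{2-\alpha}$ is exactly what secures exponential decay of the integrand on vertical lines together with control as $\Re(s)\to-\infty$), and, separately, the term-by-term integration against the exponentially \emph{growing} function $\hatJ$, which is licit only because $\kappa>1$. Everything in parts \eqref{it:hha_inverse}--\eqref{it:hha_range_La} is routine once the two multiplier identities and the Stirling asymptotics of $\M_{\hha}$ are in hand.
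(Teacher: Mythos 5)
Your proposal is correct and follows essentially the same route as the paper: the two Mellin multiplier identities $\M_{\hha}(s)\M_{\calJ}(1-s)=1$ and $\M_{\hha}(s)\M_{\La}(s)=\M_{\J}(s)$, the Stirling estimate on the critical line, Parseval for the inversion relations, and for part (3) the same double computation (Mellin--Barnes residues at $s=-n$ on one side, term-by-term integration of the series for $\hatJ$ plus the reflection formula on the other). The only loose point is the parenthetical derivation of $H_{\alpha}Q_t f(q)=e^{-q^{\alpha}t}H_{\alpha}f(q)$ via the pairing $\left\langle Q_t f,\rmdq\J\right\rangle$ — this is not a genuine $\Lp$ inner product since $\J\notin\Lp$ — but the identity is exactly \eqref{eq:spectral_expansion_Q_t} combined with $H_{\alpha}H_{\alpha}=I$, which is what the paper invokes.
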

\begin{proof}
First, since from \eqref{eq:asympt_gamma} and $0<a<1$ fixed,
$\left|\M_{\hha}\left(a+ib\right)\right|\eqpinfy\bo\left( |b|^{a-\frac12}e^{\frac{(2-\alpha)\pi}{2\alpha} |b|}\right)$,
we deduce from \eqref{eq:asymp_Mf} and the fact that, for all $b\in \R$, $\left|\M_f\left(\frac12-ib\right)\right|=\left|\M_f\left(\frac12+ib\right)\right|$,  that \begin{equation*}
b\mapsto \left|\M_{\hha}\left(\frac12+ib\right) \M_f\left(\frac12-ib\right)\right|\eqpinfy\bo\left(|b|^{-\frac12-\epsilon}\right)\in\Lr.
\end{equation*}
Therefore $\D\subseteq \D(\hha)$. Next, observing that for any  $1<\kappa<\frac{\alpha}{2-\alpha}$, $\tau>0$ and $s\in\C_{(0,\infty)}$,
\begin{equation} \label{eq:Mellin_e_taualpha}
\M_{\e_{\alp,\tau}}(s) = \int_0^{\infty}x^{s-1}e^{-\tau x^{\alp}}dx = \tau^{-\frac{s}{\alp}}\alp^{-1}\Gamma\left(\frac{s}{\alp}\right),
\end{equation}
we get
$\left|\M_{\e_{\alp,\tau}}\left(\frac12-ib\right)\right|\eqpinfy\bo\left(|b|^{\frac{1}{2\alp}-\frac12}e^{-\frac{\pi}{2\alp}|b|}\right)$,
and thus, for any  $1<\kappa<\frac{\alpha}{2-\alpha}$, $(\e_{\alp,\tau})_{\tau>0}\subseteq \D$. Moreover, since $(\e_{\alp,\tau})_{\tau>0}$ is dense in $\Lp$, we obtain  that $\D(\hha)$ is dense in $\Lp$ and therefore $\hha$ is a weak Fourier kernel. Next, using the definition of $\ha f$ in \eqref{defn:ha}, we get, by performing a change of variable in \eqref{eq:mm}, that for any $f\in\Lp$ and $s\in \frac12+i\R$,
\begin{equation}\label{eq:Mellin_haf}
\M_{\ha f}(s)=\M_{\calJ}(s)\M_f(1-s)=\frac{\Gamma\left(1-\frac{s}{\alpha}\right)\Gamma\left(\frac{s}{\alpha}\right)}{\Gamma\left(\frac1\alpha\right)\Gamma(1-s)}\M_f(1-s).
\end{equation}
Thus, for such $s$, we have
\[\M_{\hha}(s)\M_{\ha f}(1-s)=\frac{\Gamma\left(\frac1\alpha\right)\Gamma(s)}{\Gamma\left(\frac{1-s}{\alpha}\right)\Gamma\left(1-\frac1\alpha+\frac{s}{\alpha}\right)}\M_{\ha f}(1-s)=\M_f(s).\]
Therefore, an application of the Parseval identity yields that $\ha f\in\D(\hha)$ and  $\hha \ha  f=f$ for all $f\in \Lp$. Similarly, one gets that $\ha\hha g=g$ for all $g\in\D(\hha)$. Next, from \eqref{defn:ha} one gets readily that $\ha$ is self-adjoint in $\Lp$, hence $\hha$ is also self-adjoint as the inverse operator of $\ha$, which concludes the proof of Theorem~\ref{thm:coeigen}\eqref{it:hha_inverse}. Next, from \eqref{eq:mm}, we have, for any $f\in\Lp$,  $\M_{\La f}(s)=\M_{f}(s)\M_{\La}(1-s)$, therefore for at least $s\in \frac12+i\R$,
\begin{eqnarray*}
\M_{\hha }(s) \M_{\La f}(1-s)&=&\frac{\Gamma\left(\frac1\alpha\right)\Gamma(s)}{\Gamma\left(\frac{1-s}{\alpha}\right)\Gamma\left(1-\frac1\alpha+\frac{s}{\alpha}\right)} \M_{\La}(s)\M_{f}(1-s)\\&=&\frac{\Gamma(\frac{s}{\alpha})}{\Gamma(\frac{1-s}{\alpha})}\M_{f}(1-s)=\M_{H_{\alpha}f}(s)
\end{eqnarray*}
where the Mellin transform  of $H_{\alpha}f$  is given in \eqref{eq:Mellin_Hankel}. Since from Proposition~\ref{proposition:Parseval_J}, $H_{\alpha}f \in\Lp$, we get, by the Parseval identity, that $\La f\in\D(\hha)$ and  $\hha \La f = H_{\alpha}f$ for any $f\in\Lp$. Combine this relation with the self-inverse property of $H_{\alpha}$ from Proposition~\ref{proposition:Parseval_J}, we have, for any $f\in\Lp$, that $\hha \La H_{\alpha} f = H_{\alpha}H_{\alpha}f=f$, which implies $\ha =\La H_{\alpha}$ and finishes the proof of Theorem~\ref{thm:coeigen}\eqref{it:hha_range_La}. Next, combining the intertwining relation \eqref{eq:intertwin} and the spectral expansion \eqref{eq:spectral_expansion_Q_t} for $Q_t$, we get that, for any $f\in\Lp, t>0$,
 \begin{eqnarray}
P_t\La f = \La Q_t f =   \La H_{\alpha} \e_{\alpha,t}H_{\alpha}f = \ha \e_{\alpha,t}H_{\alpha}f. \label{Fubini_before}
 \end{eqnarray}
Hence, by observing that $P_t\La f \in \Ran{(\ha)}$ and by means of Theorem~\ref{thm:coeigen}\eqref{it:hha_inverse}, we get
\[\hha P_t \La f =\hha \ha \e_{\alpha,t}H_{\alpha}f =\e_{\alpha,t}H_{\alpha}f=\e_{\alpha,t}\hha \La f.\]
Finally, since, from above, we have,  for any  $1<\kappa<\frac{\alpha}{2-\alpha}$, $(\e_{\alp,\tau})_{\tau>0}\subseteq \D$, we get that $ \hha \e_{\alp,\tau}\in\Lp$ and by combining  \eqref{eq:Mellin_hha} with \eqref{eq:Mellin_e_taualpha}, that, for at least $ s\in \frac12+i\R$,
\[\M_{\hha \e_{\alp,\tau}}\left(s\right)=\frac{\tau^{\frac{s-1}{\alp}}\Gamma(\frac1\alpha)\Gamma(s)\Gamma(\frac{1-s}{\alp})}{\alp\Gamma(1-\frac{1}{\alpha}+\frac{s}{\alpha})\Gamma(\frac{1-s}{\alpha})}.\]
By following  a  line of reasoning similar to the one used in the proof of Lemma~\ref{lemma:La_G_e}, we obtain
\begin{equation*}
\hha \e_{\alp,\tau}(q)=\frac{\Gamma(\frac1\alpha)}{\alp}\sum_{n=0}^{\infty}\frac{(-1)^n\tau^{-\frac{n+1}{\alp}}\Gamma(\frac{n+1}{\alp})}{n!\Gamma(1-\frac1\alpha-\frac{n}{\alpha})\Gamma(\frac{n+1}{\alpha})}q^n=\frac{\Gamma\left(\frac1\alpha\right)}{\alp\pi}\sum_{n=0}^{\infty}\frac{(-1)^n\Gamma\left(\frac{n+1}{\alp}\right)\sin\left((n+1)\pi_{\alpha}\right)}{\tau^{\frac{n+1}{\alp}}n!}q^n,
\end{equation*}
which  defines an entire function since, by the Stirling approximation \eqref{eq:saa}, $\frac{\Gamma(\frac{n+1}{\alp})}{n\Gamma(\frac{n}{\alp})}\eqinfy \bo\left(n^{\frac1\alp-1}\right)$ and $\kappa>1$.  On the other hand, since for any $x>0$,
\begin{eqnarray*}
\frac{\pi}{\Gamma\left(\frac1\alpha\right)}\hatJ(x)&=& \cos\left(\pa\right)\Im\left(e^{-xe^{i\pi_{\alpha}}}\right)+\sin\left(\pa\right)\Re\left(e^{-xe^{i\pi_{\alpha}}}\right)= \sum_{n=0}^{\infty}\frac{(-1)^n}{n!}\sin\left((n+1)\pi_{\alpha}\right) x^{n},
\end{eqnarray*}
a standard application of Fubini's theorem, see again \cite[Section 1.77]{Titchmarsh39}, yields
\begin{eqnarray*}
\int_0^{\infty}\e_{\alp,\tau}(x)\hatJ(qx)dx&=& \frac{\Gamma\left(\frac1\alpha\right)}{\pi}\sum_{n=0}^{\infty}\frac{(-1)^n}{n!}\sin\left((n+1)\pi_{\alpha}\right) q^{n}\int_0^{\infty}e^{-\tau x^{\alp}}x^ndx\\
&=&\frac{\Gamma\left(\frac1\alpha\right)}{\alp\pi}\sum_{n=0}^{\infty}\frac{(-1)^n\Gamma\left(\frac{n+1}{\alp}\right)\sin\left((n+1)\pi_{\alpha}\right)}{\tau^{\frac{n+1}{\alp}}n!}q^n=\hha \e_{\alp,\tau}(q),
\end{eqnarray*}
from which we conclude that $\hha f(q)=\int_0^{\infty}f(x)\hatJ(qx)dx$ for all $f\in\mathcal{E}_{\alp}$. This completes the proof.
\end{proof}

\section{Spectral representation, heat kernel and smoothness properties} \label{sec:spectral}
We have now all the ingredients for   stating and proving the spectral representation of the semigroups $P$ and $\hatP$ along with  the representation of the heat kernel.
\subsection{Spectral expansions of $P$ and $\widehat{P}$ in Hilbert spaces and the heat kernel}
 \begin{theorem} \label{thm}
 	\begin{enumerate}
 	\item For any $g\in \Lp$ and  $t>0$, we have in $\Lp$
 	 		\begin{equation} \label{eq:spectral_dual}
 	 		\widehat{P}_t g=\hha \e_{\alpha,t}\ha g.
 	 		\end{equation}
 	 \item The heat kernel of $P$ admits the representation
 	  \begin{equation}\label{eq:kernel}
 	 P_t(x,y)=\int_0^{\infty}e^{-q^{\alpha}t}\calJ(qx)\hatJ(qy)dq,
 	  \end{equation}
 	 where the integral is locally uniformly  convergent in $(t,x,y) \in \R^3_+$.
 	\item For any $t>T_{\alpha}$, we have in $\Lp$,
 			\begin{equation} \label{eq:spectral_Bessel_P}
 		 P_t f=\ha \e_{\alpha,t}\hha f
 			\end{equation}
 			 where
 			\begin{enumerate}
 			\item \label{it:ran_La}  if  $f\in  \D{(\hha)}$ then $T_{\alpha}=0$,
 			\item \label{it:e_ag} otherwise if  $f \in {\rm{L}}^2\left(\overline{\e}_{\kappa,\eta}\right)$ for some $\kappa\geq \frac{\alpha}{\alpha-1}$ and $\eta>0$, where we set $\overline{\e}_{\kappa,\eta}(x)= e^{\eta x^{\kappa}},x>0$, then  $T_{\alpha}=\frac{\eta}{\alpha-1}\left(2\frac{\alpha-1}{\alpha \eta}\cos((\alpha+1)\pa)\right)^{\alpha}\mathbb{I}_{\{\kappa(\alpha-1)=\alpha \}}$  and $\hha f(q)=\int_0^{\infty}f(y)\hatJ(qy)dy$.
 			\end{enumerate}
 		
 	\end{enumerate}
 \end{theorem}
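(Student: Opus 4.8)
The whole argument is a matter of transporting, \emph{via adjointness}, the already-established identities $P_t\La=\La Q_t$ with $Q_t=H_{\alpha}\e_{\alpha,t}H_{\alpha}$ (Theorem~\ref{thm:intertwin}), $\ha=\La H_{\alpha}$, $\hha\La=H_{\alpha}$, $\hha\ha=I$ on $\Lp$, $\ha\hha=I$ on $\D(\hha)$ (Theorem~\ref{thm:coeigen}), together with the self-adjointness of $\ha$ and $\hha$ and the unitary self-inverse ($H_{\alpha}^{2}=I$) character of $H_{\alpha}$; recall also the relation $P_t\La=\ha\e_{\alpha,t}H_{\alpha}$ obtained inside the proof of Theorem~\ref{thm:coeigen}, and that $\e_{\alpha,t}$, being multiplication by the bounded function $e^{-tx^{\alpha}}$, is a bounded self-adjoint operator. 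For (1): taking $\Lp$-adjoints in $P_t\La=\ha\e_{\alpha,t}H_{\alpha}$ gives $\adjLambda\hatP_t=H_{\alpha}\e_{\alpha,t}\ha$, while adjoining $\ha=\La H_{\alpha}$ and using $H_{\alpha}^{2}=I$ gives $\adjLambda=H_{\alpha}\ha$; substituting and cancelling the invertible $H_{\alpha}$ on the left yields $\ha\hatP_t=\e_{\alpha,t}\ha$, and since $\e_{\alpha,t}\ha g=\ha\hatP_t g\in\Ran(\ha)$ one applies $\hha$ and uses $\hha\ha=I$ to obtain $\hatP_t g=\hha\e_{\alpha,t}\ha g$, which is~\eqref{eq:spectral_dual}. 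Statement (3)(a) is then its $\Lp$-adjoint: $P_t f=\ha\e_{\alpha,t}\hha f$, meaningful on $\D(\hha)$ since there $\e_{\alpha,t}\hha f\in\Lp$, and valid for every $t>0$, so $T_{\alpha}=0$; alternatively one reads it off directly on the dense subspace $\Ran(\La)$ from $P_t\La g=\La Q_t g=\ha\e_{\alpha,t}\hha(\La g)$.

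\textbf{Proof of (3)(b), the main obstacle.} For $f\in\Leag$ with $\kappa\geq\frac{\alpha}{\alpha-1}$ put $\hha f(q)=\int_{0}^{\infty}f(y)\hatJ(qy)dy$; the integral converges because, by~\eqref{eq:hatJr} and $\cos\pa<0$, $|\hatJ(qy)|\leq C e^{|\cos\pa|qy}$ while $f$ decays like $e^{-\tfrac{\eta}{2}y^{\kappa}}$ with $\kappa>1$, and, expanding the entire function $\hatJ$ into its power series and integrating term by term, $\hha f$ extends to an entire function of order at most $\frac{\kappa}{\kappa-1}\leq\alpha$. The delicate point is the exponential growth rate of $\hha f$ along $\R_+$ in the borderline case $\kappa=\frac{\alpha}{\alpha-1}$ (order exactly $\alpha$): writing $\hatJ(z)=\frac{\Gamma(1/\alpha)}{\pi}\Im\left(e^{i\pa}e^{-ze^{i\pa}}\right)$, a steepest-descent analysis of $\int_{0}^{\infty}f(y)\hatJ(qy)dy$, equivalently a Lindel\"of-type estimate on the Taylor coefficients of $\hha f$, locates the dominant saddle and yields growth rate exactly $T_{\alpha}=\frac{\eta}{\alpha-1}\left(\frac{2(\alpha-1)}{\alpha\eta}\cos((\alpha+1)\pa)\right)^{\alpha}$, the factor $\cos((\alpha+1)\pa)=-\cos\pa>0$ coming from the argument of that saddle; when $\kappa>\frac{\alpha}{\alpha-1}$ the order is strictly below $\alpha$, so $T_{\alpha}=0$. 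Hence $\e_{\alpha,t}\hha f\in\Lp$ exactly for $t>T_{\alpha}$, so the right-hand side of~\eqref{eq:spectral_Bessel_P} is well defined there. The identity $P_t f=\ha\e_{\alpha,t}\hha f$ itself is obtained by a density argument: testing against $h\in\mathcal{E}_{\kappa_0}$ with $\kappa_0<\frac{\alpha}{2-\alpha}$ (so $h\in\D(\hha)$), inserting~\eqref{eq:spectral_dual} to write $\langle f,\hatP_t h\rangle=\langle f,\hha\e_{\alpha,t}\ha h\rangle$, using that the integral representation of $\hha$ extends — by the same Mellin/Fubini computation as in Theorem~\ref{thm:coeigen}, since $\e_{\alpha,t}\ha h$ still lies in $\D(\hha)$ and decays super-exponentially — to give $\hha(\e_{\alpha,t}\ha h)(y)=\int_{0}^{\infty}e^{-tq^{\alpha}}(\ha h)(q)\hatJ(qy)dq$, and finally applying Fubini, which is licensed precisely when $t>T_{\alpha}$ by the same decay bookkeeping, to transfer the $\hatJ$-kernel onto $f$ and reassemble $\langle\ha\e_{\alpha,t}\hha f,h\rangle$; equivalently one approximates $f$ by its compactly supported truncations.

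\textbf{Proof of (2).} Since $\calJ\in\Co$ is bounded and $|\hatJ(qy)|\leq C e^{|\cos\pa|qy}$, we get $|e^{-q^{\alpha}t}\calJ(qx)\hatJ(qy)|\leq C e^{-tq^{\alpha}+|\cos\pa|qy}$, which is integrable in $q$ over $\R_+$ and uniformly so for $(t,x,y)$ ranging over compacts of $\R^3_+$ — this is where the hypothesis $\alpha>1$ enters decisively — so the integral in~\eqref{eq:kernel} converges absolutely and locally uniformly and defines a jointly continuous function $K_t(x,y)$. To identify $K_t$ with the heat kernel, fix any $\kappa>\frac{\alpha}{\alpha-1}$, so that $\mathcal{E}_{\kappa}$ is dense in $\Lp$ and, by (3)(b) with $T_{\alpha}=0$, $P_t f=\ha\e_{\alpha,t}\hha f$ with $\hha f(q)=\int_{0}^{\infty}f(y)\hatJ(qy)dy$ for every $f\in\mathcal{E}_{\kappa}$ and every $t>0$; then $P_t f(x)=\int_{0}^{\infty}\calJ(qx)e^{-q^{\alpha}t}\int_{0}^{\infty}f(y)\hatJ(qy)dy\,dq=\int_{0}^{\infty}K_t(x,y)f(y)dy$, the swap being justified by the rapid decay ($e^{-\tau y^{\kappa}}$, $\tfrac{\kappa}{\kappa-1}<\alpha$) of $f$. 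Finally, since the reflected stable semigroup is known to possess an absolutely continuous transition kernel and $\{\e_{\kappa,\tau}\}_{\tau>0}$ is a measure-determining (Stone--Weierstrass) family, comparing $\int P_t(x,y)f(y)dy$ with $\int K_t(x,y)f(y)dy$ over $f\in\mathcal{E}_{\kappa}$ forces $P_t(x,\cdot)=K_t(x,\cdot)$ a.e., which combined with the joint continuity of $K_t$ already established gives~\eqref{eq:kernel} everywhere, with the locally uniform convergence recorded above.
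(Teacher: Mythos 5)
Your parts (1) and (3)(a) are correct and are essentially the paper's argument in operator-algebraic dress: the paper obtains the intermediate identity $\ha\hatP_t=\e_{\alpha,t}\ha$ by pairing $\hatP_tf$ against the eigenfunctions $\rmdq\calJ$, you obtain it by adjoining $P_t\La=\ha\e_{\alpha,t}H_{\alpha}$ and $\ha=\La H_{\alpha}$; both routes then apply $\hha\ha=I$ and self-adjointness. The difficulty of the theorem is concentrated in (2) and (3)(b), and there your argument has a genuine gap. The pivotal step you need is the identity $\hha\left(\e_{\alpha,t}\ha h\right)(y)=\int_0^{\infty}e^{-tq^{\alpha}}\ha h(q)\hatJ(qy)dq$ for your test functions $h$, which you justify as ``the same Mellin/Fubini computation as in Theorem~\ref{thm:coeigen}''. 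But Theorem~\ref{thm:coeigen}\eqref{it:residual_fcn} establishes the integral representation of $\hha$ only on $\mathcal{E}_{\kappa}$ with $1<\kappa<\frac{\alpha}{2-\alpha}$, and for $h\in\mathcal{E}_{\kappa_0}$ the function $\e_{\alpha,t}\ha h$ is a product whose Mellin transform is a Mellin convolution, not a ratio of gamma functions, so the power-series/Mellin--Barnes computation of that proof does not transfer; this extension is asserted, not proved. Nor can you sidestep it by invoking your own (3)(b) on $f\in\mathcal{E}_{\kappa}$ with $\kappa>\frac{\alpha}{\alpha-1}$, as your proof of (2) does: that is circular, since your proof of (3)(b) rests on the very step in question, and moreover for $\alpha\leq\frac32$ no $\kappa$ satisfies simultaneously $\kappa>\frac{\alpha}{\alpha-1}$ and $\kappa<\frac{\alpha}{2-\alpha}$, so the established integral representation is not available on those test functions either.

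The paper's resolution is a specific device you are missing: it applies \eqref{eq:spectral_dual} to the dilated entrance-law densities $\rmdtau\lambda_{\Xa}$, for which $\ha\rmdtau\lambda_{\Xa}(q)=\frac{1}{\tau\Gamma(1+\frac1\alpha)}e^{-\tau^{-\alpha}q^{\alpha}}$, so that $\e_{\alpha,t}\ha\rmdtau\lambda_{\Xa}$ lies exactly in $\mathcal{E}_{\alpha}$, where the integral representation of $\hha$ has already been proved; the kernel is then identified through the injectivity of the Mellin convolution operator $\calX f(\tau)=\int_0^{\infty}f(y)\lambda_{\Xa}(\tau y)dy$, and (3)(b) is deduced afterwards from the kernel formula \eqref{eq:kernel} by Cauchy--Schwarz in $\Leag$ and the Laplace method. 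Note that this only requires the upper bound ${\rm{O}}\left(q^{a}e^{c_{\kappa}q^{\kappa/(\kappa-1)}}\right)$ on $\left|\left|\rmdq\widehat{J}_{\overline{\e}}\right|\right|_{\eag}^2$; your claim that the growth rate of $\hha f$ is exactly $T_{\alpha}$ is neither proved by your sketched saddle-point analysis nor needed. Reordering your argument as (1) $\to$ (2) $\to$ (3)(b) and inserting the $\rmdtau\lambda_{\Xa}$ device would repair the proof.
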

\begin{remark} \label{remark:Range_Lambda}
    We mention that $\D{(\hha)}\backslash \Leag \neq \emptyset$ meaning that  the two conditions \eqref{it:ran_La} and \eqref{it:e_ag} are applicable under different situations. For instance, for  $0<\beta<\min\left(\frac{\alpha}{2-\alpha},\frac{\alpha}{\alpha-1}\right)$, $\e_{\beta} \in \Ran(\Lambda_{\alpha})\backslash \Leag$, as one can show that $ \Lambda_{\alpha} B_{\beta} = \e_{\beta}$ with
      $ x \mapsto B_{\beta}(x)=\sum_{n=0}^{\infty}\frac{(-1)^n}{\M_{\Lambda_{\alpha}}(\beta n+1)n!}x^{\beta n} \in \Lp$.
    \end{remark}
\begin{proof}
First, since  for any $f\in\Lp$, $\widehat{P}_t f\in\Lp$, we get, for all $q>0$,
\[\ha \widehat{P}_tf(q)=\left\langle \widehat{P}_tf,\rmdq\calJ\right\rangle = \left\langle f,P_t\rmdq\calJ\right\rangle =e^{-q^{\alpha}t}\left\langle f,\rmdq\calJ\right\rangle=e^{-q^{\alpha}t}\calH_{\alpha}f(q)\]
where we used  Theorem~\ref{proposition:J_upper_frame}\eqref{it:eigen} for $\rmdq\calJ \in \Lp$ and  for the third identity.
Therefore, we can apply Theorem~\ref{thm:coeigen}\eqref{it:hha_inverse} to get that for any $g\in\Lp$,
\begin{eqnarray*}
\widehat{P}_tg =\hha \ha \widehat{P}_tg=\hha \e_{\alpha,t}\ha g,
\end{eqnarray*}
which proves  \eqref{eq:spectral_dual}.
On the other hand, for any $f\in\D(\hha),g\in\Lp$, we have, using the self-adjoint property of $\hha$ and $\ha$, see Theorem~\ref{thm:coeigen}\eqref{it:hha_inverse},
\begin{align*}
\left\langle P_tf,g\right\rangle &= \left\langle f,\widehat{P}_t g\right\rangle = \left\langle f,\hha \e_{\alpha,t}\ha g\right\rangle = \left\langle \ha \e_{\alpha,t}\hha f, g\right\rangle,
\end{align*}
which proves \eqref{eq:spectral_Bessel_P} for $f\in  \D{(\hha)}$  and  $T_{\alpha}=0$, that is, the claim  \eqref{it:ran_La}.
 Next, let us consider the density function $\lambda_{\Xa}\in\Lp$ of the random variable $\Xa$, which we recall was studied in Lemma~\ref{lemma:decompose_e}. Then using \eqref{eq:Mellin_haf} again, it is easy to deduce that $\M_{\ha \lambda_{\Xa}}(s)=\frac{\Gamma(\frac{s}{\alpha})}{\Gamma(\frac{1}{\alpha})}$, which coincides with the Mellin transform of $G_{\alpha}$ (see \eqref{eq:Mel_gam}). Hence we have, for all $q>0$, that $\ha \lambda_{\Xa}(q)=\lambda_{G_{\alpha}}(q)=\frac{e^{-q^{\alpha}}}{\Gamma(1+\frac1\alpha)}$. Therefore, we see that for any $\tau>0$,
$q \mapsto \e_{\alpha,t}\ha \rmdtau \lambda_{\Xa}(q) = \frac{e^{-(\tau^{-\alpha}+t)q^{\alpha}}}{\tau \Gamma(1+\frac1\alpha)} \in \mathcal{E}_{\alpha}$.
Hence, using Theorem~\ref{thm:coeigen}\eqref{it:residual_fcn},  we can write
\begin{equation} \label{eq:Pt_integral_kernel}
\hatP_t\rmdtau \lambda_{\Xa} (y)=\hha \e_{\alpha,t}\ha \rmdtau \lambda_{\Xa} (y)=\int_0^{\infty}e^{-q^{\alpha}t}\hatJ(qy)\int_0^{\infty} \lambda_{\Xa}(\tau x)\calJ(qx)dxdq.
\end{equation}
 Next, from \eqref{eq:asymp_calJ} we deduce that $|\calJ(x)|\eqzero \bo(1)$ and $|\calJ(x)|\eqinfy \bo(x^{-\alpha})$ and thus, since $\lambda_{\Xa}$ is a probability density function,
$\int_0^{\infty} \lambda_{\Xa}(\tau x)|\calJ(qx)|dx \leq C(1+q^{-\alpha})$ for some $C=C(\tau)>0$. On the other hand, from \eqref{eq:hatJr}, we get that there exists $\hat{C}>0$ such that for all $y>0$, $|\hatJ(y)|\leq \hat{C} e^{y}$, which justifies an application of  Fubini theorem to obtain
\begin{equation}  \label{eq:fub_h}
\int_0^{\infty}e^{-q^{\alpha}t}\hatJ(qy)\int_0^{\infty} \lambda_{\Xa}(\tau x)\calJ(qx)dxdq = \int_0^{\infty}\lambda_{\Xa} (\tau x)\int_0^{\infty}e^{-q^{\alpha}t}\calJ(qx)\hatJ(qy)dqdx.
\end{equation}
Now let us define the Mellin convolution operator $\calX$  by $\calX f(\tau)=\int_0^{\infty} f(y)\lambda_{\Xa}(\tau y)dy$ and, since   $\lambda_{\Xa}\in\Lp$,  $\calX \in \B{\Lp}$ and by  performing a change of variable in \eqref{eq:mm} we get, from \eqref{eq:Mellin_T_r},  $\M_{\calX}(s)=\M_{\Xa}(s)=\frac{\Gamma(s)}{\Gamma(\frac{s}{\alpha}+1-\frac1\alpha)}$ which is clearly zero-free on $\Re(s)=1$ entailing that $\calX$ is one-to-one in $\Lp$.  Moreover, by means of the same upper bounds used  above, we deduce that for any $y$ fixed,  \[ x\mapsto \int_0^{\infty}e^{-q^{\alpha}t}\calJ(qx)\hatJ(qy)dq \in\Lp\] and thus the right-hand side of \eqref{eq:fub_h} is in $\Lp$ and hence from \eqref{eq:Pt_integral_kernel}, we get that, for any $\tau>0$,
\begin{equation*}
\hatP_t \rmdtau \lambda_{\Xa} (y)=\int_0^{\infty}\lambda_{\Xa} (\tau x)\int_0^{\infty}e^{-q^{\alpha}t}\calJ(qx)\hatJ(qy)dqdx.
\end{equation*}
The one-to-one property of $\calX$  implies that the transition kernel of $\hatP_t$, denoted by $\hatP_t(y,x)$, can be represented, for  a.e.~$y>0$, as $\hatP_t(y,x)=\int_0^{\infty}e^{-q^{\alpha}t}\calJ(qx)\hatJ(qy)dq$. Since  the last integral is also locally uniformly convergent for any $(t,x,y) \in \R^3_+$, and $\hatJ$ is continuous, the identity holds everywhere. This last fact combined with the duality stated in Proposition \ref{prop}\eqref{it:dual} yield the expression \eqref{eq:kernel} by recalling, from Proposition \ref{prop}\eqref{it:dual}, that since the Lebesgue measure serves as reference measure we get that $P_t(x,y)=\hatP_t(y,x), \: t,x,y>0$.  While \eqref{it:ran_La} has been proved above, we now proceed to the justification of \eqref{it:e_ag}.  First,  by the Cauchy-Schwarz inequality, observe that for any  $f\in \Leag$, writing $\widehat{J}_{\overline{\e}}(qy)=\frac{\hatJ(qy)}{\eag(y)}$, we have
\begin{equation*}
\int_0^{\infty}f(y)\hatJ(qy)dy=\int_0^{\infty}f(y)\widehat{J}_{\overline{\e}}(qy)\eag(y)dy \leq ||f||_{\eag} \left|\left|\rmdq\widehat{J}_{\overline{\e}}\right|\right|_{\eag}.
\end{equation*}
Moreover, since for all $y>0$,
$\left|\hatJ(y)\right|\leq Ce^{-y\cos(\pa)}, C>0,$ we have by an application of the Laplace method, see e.g.~\cite[Ex.7.3 p.84]{Olver-74}, that for large $q$,
\begin{eqnarray*}
\left|\left|\rmdq\widehat{J}_{\overline{\e}} \right|\right|_{\eag}^2
\leq	C^2\int_0^{\infty} e^{-\eta y^{\kappa}}e^{-2q\cos(\pa)y}dy \eqinfy {\rm{O}}\left(q^{a}e^{c_{\kappa}q^{\frac{\kappa}{\kappa -1 }}}\right),
\end{eqnarray*}
where $a>0$ and we set $c_{\kappa}= (\kappa-1)\eta^{\frac{1}{1-\kappa}}\left(\frac{2\cos\left((\alpha+1)\pa\right)}{\kappa}\right)^{\frac{\kappa}{\kappa-1}}>0$ since $\kappa>\alpha>1$. Note that $c_{\frac{\alpha}{\alpha-1}}=T_{\alpha}$ and for any $t>T_{\alpha}$, since $\alpha\geq \frac{\kappa}{\kappa-1}$, $q\mapsto F_{\kappa}(q)=\e_{{\alpha},t}(q)\left( C+ q^{a}e^{c_{\kappa}q^{\frac{\kappa}{\kappa -1 }}}\right)$ is integrable on $\R_+$. This justifies an application of Fubini Theorem which gives that, for such $f,t$ and $x>0$,
\begin{equation}
P_tf(x)=\int_0^{\infty}f(y)\int_0^{\infty}e^{-q^{\alpha}y}\calJ(qx)\hatJ(qy)dqdy.
\end{equation}
Finally,  as $F_{\kappa} \in \Lp$ and from Theorem \ref{thm}, the sequence $(\rmdq\calJ)_{q>0}$ is an upper frame, we obtain that
in fact $P_tf \in \Lp$ and, in $\Lp$, $P_t f = \ha \e_{\alpha,t}\hha f$  with $\hha f(q)=\int_0^{\infty}f(y)\hatJ(qy)dy$. This completes the proof.
\end{proof}

 \subsection{Regularity properties}
Finally, we extract from the spectral decomposition stated in Theorem \ref{thm} the  following regularity properties  as well as an alternative representation of the heat kernel.
\begin{theorem} \label{theorem:kernel}
  \begin{enumerate}	
  \item For any $f \in \Leag \cup \D{(\hha)}$, $(t,x)\mapsto P_tf(x) \in \Cd^{\infty}((T_{\alpha},\infty) \times \R_+)$ and $T_{\alpha}$ was defined in Theorem \ref{thm}.
   \item \label{it:sh}  We have $(t,x,y)\mapsto P_t(x,y) \in \Cd^{\infty}(\R_+^3)$ and, for any non-negative integers $k,\frakp,\frakq$,
     		\begin{equation} \label{eq:kernel_integral_form_P}
     		\frac{d^{k}}{dt^{k}}P_t^{(\frakp,\frakq)}(x,y)= (-1)^k\int_0^{\infty}q^{\alpha k}e^{-q^{\alpha}t} \left(\rmdq\calJ\right)^{(\frakp)}(x) \left(\rmdq\hatJ\right)^{(\frakq)}(y)dq
     		\end{equation}
     where the integral is locally uniformly  convergent in $(t,x,y) \in \R^3_+$.
   \item Moreover, the heat kernel can be written in a series form as 	
    	\begin{equation} \label{eq:kernel_series_form_P}
    	P_t(x,y)=\sum_{n=0}^{\infty}(1+t)^{-n-\frac1\alpha}\mathcal{P}_n(x^{\alpha}) \mathcal{V}_n\left( y(1+t)^{-\frac1\alpha}\right),
    	\end{equation}
    	where $\mathcal{P}_n(x) = \frac{1}{\Gamma(1+\frac1\alpha)}\sum_{k=0}^n (-1)^k\frac{{ n \choose k}k!}{\Gamma(\alpha k+1)} x^{k}$  and $\mathcal{V}_n(y)=\frac{1}{n!}\int_0^{\infty}q^{\alpha n}e^{-q^{\alpha}}\hatJ(qy)dq$ and the series is locally uniformly convergent in $(t,x,y)\in \R^3_+$.
  \end{enumerate}
\end{theorem}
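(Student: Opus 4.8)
The three assertions all rest on the two integral representations furnished by Theorem~\ref{thm}: for $f\in\Leag\cup\D(\hha)$ and $t>T_\alpha$,
\[ P_tf(x)=\int_0^{\infty}e^{-q^{\alpha}t}\,\calJ(qx)\,\hha f(q)\,dq,\qquad x>0, \]
where in the $\Leag$-case $\hha f(q)=\int_0^{\infty}f(y)\hatJ(qy)\,dy$, together with the kernel identity \eqref{eq:kernel}, $P_t(x,y)=\int_0^{\infty}e^{-q^{\alpha}t}\calJ(qx)\hatJ(qy)\,dq$ valid for all $(t,x,y)\in\R_+^3$. The plan for items (1) and (2) is to differentiate these integrals under the integral sign, the only real work being to exhibit, on each compact subset of the relevant set, a $q$-integrable majorant for all the differentiated integrands; item (3) then follows by expanding $e^{q^{\alpha}}\calJ(qx)$ into a power series in $q^{\alpha}$ and integrating term by term, the substitution $v=q(1+t)^{1/\alpha}$ producing the stated form.

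Differentiating to order $(k,\frakp,\frakq)$ in $(t,x,y)$ inserts the kernel $(-1)^{k}q^{\alpha k}e^{-q^{\alpha}t}\,(\rmdq\calJ)^{(\frakp)}(x)\,(\rmdq\hatJ)^{(\frakq)}(y)$, so I need uniform control on compacta of $\R_+$ of the three factors $(\rmdq\calJ)^{(\frakp)}(x)=q^{\frakp}\calJ^{(\frakp)}(qx)$, $(\rmdq\hatJ)^{(\frakq)}(y)=q^{\frakq}\hatJ^{(\frakq)}(qy)$, and $\hha f(q)$. For the first, $u\mapsto u^{\frakp}\calJ^{(\frakp)}(u)$ is bounded on $(0,\infty)$ --- it is $\bo(u^{\alpha})$ at the origin by \eqref{eq:calJr} and $\bo(u^{-\alpha})$ at infinity by \eqref{eq:asymp_calJ}, so the possible singularity of $\calJ^{(\frakp)}$ at $0$ when $\frakp>\alpha$ is absorbed by the prefactor --- whence $|(\rmdq\calJ)^{(\frakp)}(x)|=|(qx)^{\frakp}\calJ^{(\frakp)}(qx)|\,x^{-\frakp}\le C_{\frakp}x^{-\frakp}$. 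For the second, $\hatJ$ is entire and, by \eqref{eq:hatJr}, equals up to a constant $\sin(\pa-z\sin(\pa))e^{-z\cos(\pa)}$; since $\cos(\pa)<0$ for $\alpha\in(1,2)$, the Leibniz rule gives $|\hatJ^{(\frakq)}(z)|\le C_{\frakq}e^{-z\cos(\pa)}$, so $|(\rmdq\hatJ)^{(\frakq)}(y)|\le C_{\frakq}q^{\frakq}e^{-qy\cos(\pa)}$, of polynomial times linear-exponential growth. For $\hha f$, in the $\D(\hha)$-case $\hha f\in\Lp$ and the $q$-integrals are handled by Cauchy--Schwarz with $T_\alpha=0$, while in the $\Leag$-case the proof of Theorem~\ref{thm} already bounds $|\hha f(q)|$ by $\bo(q^{a}e^{c_\kappa q^{\kappa/(\kappa-1)}})$ with $\kappa/(\kappa-1)\le\alpha$, equality (i.e.\ $\kappa(\alpha-1)=\alpha$) being exactly the situation calling for the positive cut-off $T_\alpha$. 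In every case, because $\alpha>1$ the factor $e^{-q^{\alpha}t}$ with $t>T_\alpha$ dominates all powers of $q$ together with every $e^{cq^{\beta}}$, $\beta<\alpha$ (and $e^{cq^{\alpha}}$ once $t$ exceeds the matching threshold), so on each box $[t_1,t_2]\times[x_1,x_2]\times[y_1,y_2]$ with $t_1>T_\alpha$ one obtains a fixed $q$-integrable majorant. This legitimises differentiation under the integral, yielding at once the $\Cd^{\infty}$-regularity in (1) and (2), the formula \eqref{eq:kernel_integral_form_P}, and its stated local uniform convergence.

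For item (3), note that $e^{q^{\alpha}}=\sum_{j\ge0}q^{\alpha j}/j!$ and, from \eqref{eq:calJr}, $\calJ(qx)=\frac{1}{\Gamma(1+\frac1\alpha)}\sum_{m\ge0}(-1)^{m}(qx)^{\alpha m}/\Gamma(\alpha m+1)$ are entire functions of $w=q^{\alpha}$; their Cauchy product, combined with $\binom nk k!=n!/(n-k)!$, gives $e^{q^{\alpha}}\calJ(qx)=\sum_{n\ge0}\frac{q^{\alpha n}}{n!}\mathcal{P}_n(x^{\alpha})$. Writing $e^{-q^{\alpha}t}=e^{-q^{\alpha}(1+t)}e^{q^{\alpha}}$ in \eqref{eq:kernel} and interchanging summation and integration,
\[ P_t(x,y)=\sum_{n\ge0}\frac{\mathcal{P}_n(x^{\alpha})}{n!}\int_0^{\infty}q^{\alpha n}e^{-q^{\alpha}(1+t)}\hatJ(qy)\,dq, \]
and the substitution $v=q(1+t)^{1/\alpha}$ identifies the $n$-th integral as $(1+t)^{-n-\frac1\alpha}\,n!\,\mathcal{V}_n(y(1+t)^{-\frac1\alpha})$, which is \eqref{eq:kernel_series_form_P}. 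To justify the interchange and prove local uniform convergence I would combine two bounds: using $|\hatJ(z)|\le Ce^{-z\cos(\pa)}$ and absorbing the linear exponential into the $q^{\alpha}$ one at the cost of an arbitrarily small $\varepsilon>0$, $\int_0^{\infty}q^{\alpha n}e^{-q^{\alpha}(1+t)}|\hatJ(qy)|\,dq\le C_{\varepsilon}\,\Gamma(n+\frac1\alpha)(1+t-\varepsilon)^{-n-\frac1\alpha}$; and, for $x$ in a compact set, $|\mathcal{P}_n(x^{\alpha})|\le C_1 e^{C_2 n^{1/\alpha}}$ with $C_1,C_2$ depending only on that set. The $n$-th summand is then $\bo\big(\frac{\Gamma(n+1/\alpha)}{n!}(1+t-\varepsilon)^{-n}e^{C_2 n^{1/\alpha}}\big)$; since $\Gamma(n+\frac1\alpha)/n!$ behaves like $n^{1/\alpha-1}$ and $(1+t-\varepsilon)>1$, the geometric factor defeats the subexponential one, giving a majorant summable uniformly on compacta. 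Finally $\mathcal{P}_n$ is a polynomial in $x^{\alpha}$ and $\mathcal{V}_n\in\Cd^{\infty}$ (differentiate under its defining integral using again $|\hatJ^{(\frakq)}(z)|\le C_{\frakq}e^{-z\cos(\pa)}$), so term-by-term differentiation in $(t,x,y)$ only multiplies summands by powers of $n$ and preserves summability; this also re-derives the smoothness of $P_t(x,y)$.

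The differentiations under the integral in (1)--(2) are routine once the three domination facts above are in place. The one genuinely delicate point is the bound $|\mathcal{P}_n(x^{\alpha})|\le C_1e^{C_2 n^{1/\alpha}}$ in (3): the triangle inequality on the alternating sum defining $\mathcal{P}_n(x^{\alpha})$ yields only $\bo(n!)$, which is useless, and one must exploit the cancellation --- equivalently the fact that $w\mapsto e^{w}\sum_{m\ge0}(-x^{\alpha})^{m}w^{m}/\Gamma(\alpha m+1)$ is an entire function of $w$ of order $1$ and finite type, so a Cauchy estimate keeping the growth of $e^{w}$ separate from that of the Mittag-Leffler factor gives the subexponential control --- and it is precisely this subexponential (rather than merely exponential) control that makes \eqref{eq:kernel_series_form_P} converge for every $t>0$ rather than only for $t$ large.
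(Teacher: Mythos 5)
Your proposal is correct and follows essentially the same route as the paper: differentiation under the integral in \eqref{eq:kernel} with the same majorants on $(\rmdq\calJ)^{(\frakp)}$, $(\rmdq\hatJ)^{(\frakq)}$ and $\hha f$, and for the series form the expansion $e^{q^{\alpha}}\calJ(qx)=\sum_n\mathcal{P}_n(x^{\alpha})q^{\alpha n}/n!$ followed by an interchange justified by a subexponential bound $|\mathcal{P}_n(x^{\alpha})|=\bo(n^{1/2}e^{xn^{1/\alpha}})$ obtained from a Cauchy estimate on the order-one generating function, which is precisely the paper's contour-integral argument. The only cosmetic differences are that you derive the $\mathcal{P}_n$-expansion by an elementary Cauchy product where the paper cites Craven--Csordas, and you absorb $|\hatJ(qy)|$ into $e^{-\varepsilon q^{\alpha}}$ to bound the $q$-integral directly instead of estimating $\mathcal{V}_n(y)$ separately as the paper does.
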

\begin{proof}
We actually  prove only the  item \eqref{it:sh} as  the  first item  follows by developing similar arguments.
 First, from Theorem~\ref{proposition:J_upper_frame} and Theorem~\ref{thm:coeigen}, we have that $\calJ, \hatJ \in \Ci$ and for any $x,y>0$ fixed and non-negative integers $k,\frakp,\frakq$,
\begin{equation*}
\left|\frac{d^k}{dt^k}e^{-q^{\alpha}t} (\rmdq\calJ)^{(\frakp)}(x)(\rmdq\hatJ)^{(\frakq)}(y) \right| \eqinfy \bo\left(q^{\alpha(k-1)+\frakq}e^{-q^{\alpha}t + qy}\right)
\end{equation*}
 and
\begin{equation*}
 \left|\frac{d^k}{dt^k}e^{-q^{\alpha}t} (\rmdq\calJ)^{(\frakp)}(x)(\rmdq\hatJ)^{(\frakq)}(y) \right| \eqzero \bo\left(q^{\alpha k+\frakp+\frakq}\right).
 \end{equation*}
  Hence \eqref{eq:kernel} yields
\begin{equation*}
\frac{d^k}{dt^k}P_t^{(\frakp,\frakq)}(x,y)
=(-1)^{k}\int_0^{\infty}q^{\alpha k} e^{-q^{\alpha}t} (\rmdq\calJ)^{(\frakp)}(x)(\rmdq\hatJ)^{(\frakq)}(y) dq
\end{equation*}
where the integral is locally uniformly convergent in $(t,x,y)\in \R^3_+$.
Hence  $(t,x,y)\mapsto P_t(x,y) \in \Cd^{\infty}(\R_+^3)$. To prove \eqref{eq:kernel_series_form_P},  we first observe, from \cite[Proposition 2.1(ii)]{Craven-Csordas-89_Jensen_Turan},  that for any $x, q \in \R_+$,
\begin{equation} \label{eq:calJ_Poly}
e^{q^{\alpha}} \calJ\left(q x\right)= \sum^{\infty}_{n=0} \mathcal{P}_{n}(x^{\alpha}) \frac{q^{\alpha n}}{n!},
\end{equation}
which by substitution in \eqref{eq:kernel} gives, assuming, for a moment, that one may interchange the sum and integral,
  \begin{eqnarray*}
 P_t(x,y)&=& \int_0^{\infty}e^{-q^{\alpha}(t+1)}\hatJ(q y) \sum^{\infty}_{n=0}  \frac{\mathcal{P}_{n}(x^{\alpha})}{n!}  q^{\alpha n}dq
 = \sum_{n=0}^{\infty}(1+t)^{-n-\frac1\alpha} \mathcal{P}_{n}(x^{\alpha})\mathcal{V}_n\left(y(1+t)^{-\frac1\alpha}\right).
 \end{eqnarray*}
In order to justify the interchange we provide some uniform bounds for large $n$ of $\mathcal{P}_{n}$ and $\mathcal{V}_n$. First, since $z\mapsto \calJ(z^{\frac1\alpha})$ is an entire function of  order $\overline{\lim}_{n\rightarrow \infty}\frac{n\ln n}{\Gamma(\alpha n+1)}=\frac1\alpha$ and  type $1$, by following a line of reasoning similar to the proof of \cite[Theorem 8.4(5)]{Patie-Savov-15}, we obtain the following sequence of inequalities, valid for all $x>0$ and $n$ large,
 \begin{eqnarray*}
 |\Poly_n(x)|\leq \Poly_n(-x)&=&\frac{n!}{2\pi i}x^{ n}\oint_{nx}e^{\frac{z}{x}}\calJ\left(-z^{\frac1\alpha}\right)\frac{dz}{z^{n+1}} \\&\leq& e^{n^{\frac1\alpha}x^{\frac1\alpha}}\frac{n!e^{-n\ln n}}{2\pi}\int_0^{2\pi}e^{n\cos \theta}d\theta=\bo{\left(n^{\frac12}e^{(xn)^{\frac1\alpha}}\right)}
 \end{eqnarray*}
 where the contour is a circle centered at 0 with radius $nx>0$ and for  the last inequality we used the bound $n!\leq e^{1-n}n^{n-\frac12}$. Hence, we have, for all fixed $x>0$ and $n$ large,
 \begin{equation}\label{eq:bound_P_n}
 |\Poly_n(x^{\alpha})|=\bo{\left(n^{\frac12}e^{xn^{\frac1\alpha}}\right)}.
  \end{equation} Next, since for any $q>0$, $|\hatJ(q)|\leq \hatJ(-q)\leq C e^q$, for some constant $C=C(\alpha)>0$, we get, for all $y>0$ and $n\in \N$,
 \begin{eqnarray*}
 \left|\mathcal{V}_n(y)\right| &\leq& \frac{C}{n!}\int_0^{\infty}e^{-q^{\alpha}}q^{\alpha n}\sum_{k=0}^{\infty}\frac{(yq)^k}{k!}d q= C\sum_{k=0}^{\infty}\frac{\Gamma(n+1+\frac{k}{\alpha})}{n!k!}y^{k}
 \end{eqnarray*}
 where for the equality we use the integral representation of the gamma function. Now,  by performing the same computations that in the proof of \cite[Proposition 2.2]{Patie2015}, we get, for all $y>0$ and $n$ large,
 \begin{equation} \label{eq:bound_W_n}
 \left|\mathcal{V}_n(y)\right| = \bo{\left(ne^{\bar{c}_{\alpha}y n^{\frac1\alpha}}\right)}
 \end{equation}
 where $\bar{c}^{\alpha}_{\alpha}=\frac{\alpha}{\alpha-1}$. Hence combining the bounds \eqref{eq:bound_P_n} and \eqref{eq:bound_W_n}, we obtain, for any fixed $x,y,t>0$ and large $n$,
 \begin{equation} \label{eq:asymp_series}
 (1+t)^{-n-\frac1\alpha}\left|\Poly_n(x^{\alpha})\mathcal{V}_n\left(y(1+t)^{-\frac1\alpha}\right)\right| = \bo{\left(n^{\frac32} e^{(\bar{c}_{\alpha}y(1+t)^{-\frac1\alpha}+x) n^{\frac1\alpha} -\ln(1+t)n}\right)},
 \end{equation}
which justifies the interchange and completes the proof.
\end{proof}

\appendix
\section{The $\alpha$-Bessel semigroup and the operator $H_{\alpha}$} \label{sec:appen}
We say $Q=(Q_t)_{t\geq 0}$ is an $\alpha$-Bessel semigroup with index $1<\alpha<2$ if it is a Feller semigroup  whose  infinitesimal generator is given by
\begin{equation}\label{eq:L_Q}
\mathbf{L}f(x)=\frac{2}{\alpha^2}x^{2-\alpha}f^{(2)}(x)+\frac{2}{\alpha}\left(\frac{2}{\alpha}-1\right)x^{1-\alpha}f^{(1)}(x), \: x>0,
\end{equation}
where $ f \in \D_{\mathbf{L}}=\{f\in \Co; \:\mathbf{L}f \in \Co, f^{+}(0)=0 \}$, the domain of $\mathbf{L}$,
with $f^{+}(x)=\lim_{h\downarrow 0} \frac{f(x+h)-f(x)}{s(x+h)-s(x)}$
is the right-derivative of $f$ with respect to the scale function $s(x)=\frac{x^{\alpha-1}}{\alpha-1}$.
We point out that
\begin{equation} \label{eq:defn_Q}
Q_tf(x)=K_t\mathtt{p}_{\frac{1}{\alpha}}f(x^{\alpha}),\: x>0,
\end{equation}
where $K=(K_t)_{t\geq 0}$ is the semigroup of a squared Bessel process of  dimension $\frac{2}{\alpha}$, or equivalently of order $\frac{1}{\alpha}-1$ and $\mathtt{p}_{\frac{1}{\alpha}} f(x)=f(x^{\frac{1}{\alpha}})$.  We refer in this part to \cite[Appendix 1]{Borodin-Salminen-02} for concise information on  squared Bessel processes that can be easily transferred to $Q$ by means of the identity \eqref{eq:defn_Q}.
Furthermore, writing $(\vartheta_t)_{t>0}$  for the entrance law of $Q$, we have  $\vartheta_t f = \int_0^{\infty} f(ty)\lambda_{G_{\alpha}}(y)dy$ where $\lambda_{G_{\alpha}}(y)=\frac{e^{-y^{\alpha}}}{\Gamma(1+\frac1\alpha)},y>0,$ is the density of the variable  $G_{\alpha}$. Note that $G^{\alpha}_{\alpha}$  is simply a  gamma variable of  parameter $\frac1\alpha$, the law of this latter being the entrance law at time $1$  of $K$. The Mellin transform of $G_{\alpha}$ is given by
\begin{equation} \label{eq:Mel_gam}
\M_{G_{\alpha}}(s)=\frac{\Gamma(\frac{s}{\alpha})}{\Gamma(\frac1\alpha)}, \quad \Re(s)> 0.
\end{equation}
Next, defining the function $\J$, for $z\in\C_{(-\infty,0)^c}$, by
\begin{equation} \label{eq:Bessel}
\J(z)=\alpha \sum_{n=0}^{\infty}\frac{(e^{i\pi}z^{\alpha})^n}{n!\Gamma(n+\frac1\alpha)},
\end{equation}
we can deduce  from \cite[Appendix 1]{Borodin-Salminen-02} that for any $q, t,x\geq 0$,
\[Q_t \rmdq \J(x) = e^{-q^{\alpha}t}\rmdq\J(x).\]
Next, we introduce the linear operator defined, for a smooth function $f$ on $q>0$,  by
\begin{equation} \label{eq:defn_Hankel}
H_{\alpha} f(q)=\int_0^{\infty} \J(qx)f(x)dx.
\end{equation}
Then, $H_{\alpha}$ has the following properties reminiscent of the classical Hankel transform.
\begin{proposition} \label{proposition:Parseval_J}
$ H_{\alpha}$ is a unitary and self-inverse operator on $\Lp$, i.e.~$\|H_{\alpha}f\|=\|f\|$ and $H_{\alpha} H_{\alpha}f=f$ for all $f\in\Lp$. Moreover, for any $f\in \Lp$, the Mellin transform of $H_{\alpha}f$ is given by
\begin{equation}\label{eq:Mellin_Hankel}
\M_{H_{\alpha}f}(s)=\M_{\J}(s)\M_f(1-s)=\frac{\Gamma(\frac{s}{\alpha})}{\Gamma(\frac{1-s}{\alpha})}\M_f(1-s), \quad s \in \C_{(0,1)}.
\end{equation}
\end{proposition}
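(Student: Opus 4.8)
The plan is to construct $H_{\alpha}$ from its explicit action on a convenient dense subspace and then appeal to the Mellin--Plancherel theory, in the spirit of the computations in Lemma~\ref{lemma:La_G_e} and the proof of Theorem~\ref{thm:coeigen}. First I would take the linear span $\mathcal{E}_{\alpha}=\mathrm{Span}(\e_{\alpha,\tau})_{\tau>0}$, which is dense in $\Lp$ since $\e_{\alpha,\tau}=\rmdq\e_{\alpha}$ with $\tau=q^{\alpha}$ and $\overline{\mathrm{Span}}(\rmdq\e_{\alpha})_{q>0}=\Lp$. On this subspace the integral $\int_0^{\infty}\J(qx)\e_{\alpha,\tau}(x)dx$ converges absolutely (the power series of $\J$ against the superexponentially decaying $e^{-\tau x^{\alpha}}$), so a term‑by‑term integration justified by Fubini exactly as in Lemma~\ref{lemma:La_G_e}, using $\int_0^{\infty}x^{\alpha n}e^{-\tau x^{\alpha}}dx=\frac1\alpha\tau^{-n-\frac1\alpha}\Gamma(n+\frac1\alpha)$, yields the closed form
\[ H_{\alpha}\e_{\alpha,\tau}(q)=\tau^{-\frac1\alpha}\e_{\alpha,\frac1\tau}(q),\qquad q>0. \]

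From this identity, unitarity and self-inverseness are essentially immediate. A direct computation of Gaussian-type integrals gives, for all $\tau,\sigma>0$,
\[ \langle H_{\alpha}\e_{\alpha,\tau},H_{\alpha}\e_{\alpha,\sigma}\rangle=\tau^{-\frac1\alpha}\sigma^{-\frac1\alpha}\langle\e_{\alpha,\frac1\tau},\e_{\alpha,\frac1\sigma}\rangle=\frac{\Gamma(\frac1\alpha)}{\alpha}(\tau+\sigma)^{-\frac1\alpha}=\langle\e_{\alpha,\tau},\e_{\alpha,\sigma}\rangle, \]
so $H_{\alpha}$ preserves the inner product on $\mathcal{E}_{\alpha}$ and hence extends uniquely to an isometry of $\Lp$, still denoted $H_{\alpha}$, agreeing with the integral operator wherever the latter is defined. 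Likewise $H_{\alpha}H_{\alpha}\e_{\alpha,\tau}=\tau^{-\frac1\alpha}H_{\alpha}\e_{\alpha,\frac1\tau}=\e_{\alpha,\tau}$, so $H_{\alpha}^{2}=I$ on the dense subspace and therefore on $\Lp$; in particular $H_{\alpha}$ is onto, hence unitary and self-inverse.

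For the Mellin representation I would first record that $\M_{\J}(s)=\Gamma(\frac{s}{\alpha})/\Gamma(\frac{1-s}{\alpha})$ on $\C_{(0,1)}$. The defining integral $\int_0^{\infty}x^{s-1}\J(x)dx$ converges absolutely only on the thin substrip $0<\Re(s)<\frac{2-\alpha}{4}$ (recall $\J(x)\eqinfy\bo\left(x^{\frac{\alpha-2}{4}}\right)$ and $\J(0)=\alpha/\Gamma(\frac1\alpha)$), but there one evaluates it: applying Fubini to $\La\J=\calJ$ from \eqref{eq:Ljj} gives $\M_{\calJ}(s)=\M_{\J}(s)\M_{\La}(1-s)$, whence, by \eqref{eq:Mellin_calJ} and \eqref{eq:defn_La}, $\M_{\J}(s)=\Gamma(\frac{s}{\alpha})/\Gamma(\frac{1-s}{\alpha})$ (alternatively this follows from the residue/Mellin--Barnes argument of Lemma~\ref{lemma:La_G_e} applied directly to the power series of $\J$). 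Since the right-hand side is analytic and zero-free on $\C_{(0,1)}$, the identity persists on the whole strip by analytic continuation. Now, using $\M_{\e_{\alpha,\tau}}(s)=\frac1\alpha\tau^{-\frac s\alpha}\Gamma(\frac s\alpha)$ together with the closed form above, one checks on $\Re(s)=\frac12$ that $\M_{H_{\alpha}\e_{\alpha,\tau}}(s)=\frac1\alpha\tau^{\frac{s-1}{\alpha}}\Gamma(\frac s\alpha)=\M_{\J}(s)\M_{\e_{\alpha,\tau}}(1-s)$; since $|\M_{\J}(\frac12+ib)|=|\Gamma(\frac{1/2+ib}{\alpha})|/|\Gamma(\frac{1/2-ib}{\alpha})|=1$, the multiplier $b\mapsto\M_{\J}(\frac12+ib)$ is bounded, so both sides of $\M_{H_{\alpha}f}(s)=\M_{\J}(s)\M_f(1-s)$ depend continuously on $f\in\Lp$ through the Parseval identity for the Mellin transform, and the identity extends from $\mathcal{E}_{\alpha}$ to all of $\Lp$, which is \eqref{eq:Mellin_Hankel}. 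One also recovers unitarity here from $|\M_{\J}(\frac12+ib)|\equiv1$ and self-inverseness from $\M_{\J}(s)\M_{\J}(1-s)\equiv1$.

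The only genuinely delicate point is the identification of $\M_{\J}$: because $\J$ is bounded but decays only like a negative power and oscillates, its Mellin transform is an honest integral merely on the thin strip $0<\Re(s)<\frac{2-\alpha}{4}$, so one must route the evaluation either through the factorization $\La\J=\calJ$ or through a Mellin--Barnes contour argument and then analytically continue to the critical line $\Re(s)=\frac12$ where the unitary structure lives. Everything else is routine Gaussian bookkeeping and the standard density/extension argument.
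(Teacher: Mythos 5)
Your proof is correct, but it takes a genuinely different route from the paper's. The paper proves the proposition by reduction to the classical Hankel transform: it writes $\J(x)=\alpha x^{\frac{\alpha-1}{2}}\mathtt{J}_{\frac1\alpha-1}\left(2x^{\frac{\alpha}{2}}\right)$, transports the known unitarity and self-inverse property of $\mathtt{H}_{\alpha}$ on ${\rm{L}}^2(x\,dx)$ through the change of variables $g(x)=x^{\frac1\alpha-1}f\bigl((\tfrac{x}{2})^{\frac{2}{\alpha}}\bigr)$, and reads off $\M_{\J}(s)=\Gamma(\frac{s}{\alpha})/\Gamma(\frac{1-s}{\alpha})$ from the Mellin--Barnes representation of Bessel functions. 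You instead compute $H_{\alpha}$ explicitly on the dense family $(\e_{\alpha,\tau})_{\tau>0}$, obtaining the closed form $H_{\alpha}\e_{\alpha,\tau}=\tau^{-\frac1\alpha}\e_{\alpha,\frac1\tau}$ (your term-by-term integration is legitimate: the absolute series of $\J(qx)$ grows only like $e^{c(qx)^{\alpha/2}}$, which is dominated by $e^{-\tau x^{\alpha}}$), verify preservation of inner products and the involution property there, and extend by density; the Mellin identity is then checked on the critical line and extended by Parseval using the boundedness of $|\M_{\J}(\frac12+ib)|\equiv 1$. Your route is more self-contained (no appeal to the classical Hankel theory of \cite{Poularikas_96}) and has the virtue of making the unitary structure transparent through the multiplier identities $|\M_{\J}(\frac12+ib)|=1$ and $\M_{\J}(s)\M_{\J}(1-s)=1$; the paper's route is shorter given the cited classical results and produces the operator directly in the ${\rm{L}}^2$ sense. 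The one point where you are slightly informal --- asserting that the isometric extension from $\mathcal{E}_{\alpha}$ agrees with the integral \eqref{eq:defn_Hankel} wherever the latter converges --- is the same definitional looseness already present in the paper (the integral does not converge absolutely for general $f\in\Lp$), and it does not affect the validity of the proposition, which concerns the ${\rm{L}}^2$-operator.
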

\begin{proof}
First, note that $\J(x)=\alpha x^{\frac{\alpha-1}{2}}\mathtt{J}_{\frac1\alpha-1}\left(2x^{\frac{\alpha}{2}}\right)$ where $\mathtt{J}_{\frac1\alpha-1}$ denotes the standard Bessel function of the first kind of order $\frac1\alpha-1$, see e.g.~\cite[Section 5.3]{Lebedev-72}. Then recall that the standard Hankel transform is defined, for any $g\in \Lm$ where $m(dx)=xdx$, as
\begin{equation*}
\mathtt{H}_{\alpha} g(r)=\int_0^{\infty}\mathtt{J}_{\frac1\alpha-1}(r x)g(x)xdx, \quad r>0.
\end{equation*}
Then by \cite[Chapter 9]{Poularikas_96}, $\mathtt{H}_{\alpha}$ is unitary and self-inverse on $\Lm$, i.e.~for any $g\in \Lm$, we have $\|\mathtt{H}_{\alpha} g\|_{m}=\|g\|_{m}$ and $\mathtt{H}_{\alpha}\mathtt{H}_{\alpha}g =g$. Now for any $f\in\Lp$, we set $g(x)=x^{\frac1\alpha-1}f\left((\frac{x}{2})^{\frac{2}{\alpha}}\right)$. Then it can be easily checked, through a standard change of variable, that $g\in \Lm$ and $\|g\|^2_{m}=\alpha 2^{\frac2\alpha-1}\|f\|^2$. Therefore, by applying a change of variable, one gets
\begin{eqnarray*}
\|H_{\alpha}f\|^2&=&\int_0^{\infty}\left| \int_0^{\infty}f(x) \J(qx)dx \right|^2 dq
= \frac{1}{ 2^{\frac2\alpha}}\int_0^{\infty}\left|\int_0^{\infty}\mathtt{J}_{\frac1\alpha-1}(q^{\frac{\alpha}{2}}y)g(y)ydy\right|^2 \frac{dq}{q^{1-\alpha}}\\
&=& \frac{1}{ 2^{\frac2\alpha}}\int_0^{\infty}q^{\alpha-1}\left|\mathtt{H}_{\alpha} g(q^{\frac{\alpha}{2}})\right|^2dq=\frac{1}{\alpha 2^{\frac2\alpha-1}}\|\mathtt{H}_{\alpha} g\|_{m}^2 
=\frac{1}{\alpha 2^{\frac2\alpha-1}}\|g\|_{m}^2= \|f\|^2.
\end{eqnarray*}
This proves that $H_{\alpha}$ is a unitary operator. Next, for any $f\in\Lp$, again by change of variable, we have
\begin{eqnarray*}
H_{\alpha}H_{\alpha}f(y)=\int_0^{\infty}\J(qy)\int_0^{\infty}f(x)\J(qx)dxdq=\frac{y^{\frac{\alpha-1}{2}}}{2^{\frac1\alpha-1}}\mathtt{H}_{\alpha}\mathtt{H}_{\alpha}g(2y^{\frac{\alpha}{2}})=\frac{y^{\frac{\alpha-1}{2}}}{2^{\frac1\alpha-1}}g(2y^{\frac{\alpha}{2}})=f(y),
\end{eqnarray*}
which proves that $H_{\alpha}$ is self-inverse. Next, using again a change of variable in \eqref{eq:mm}, we have $\M_{H_{\alpha}f}(s)=\M_{\J}(s)\M_f(1-s)$,
where for $0<\Re(s)<1$, $\M_{\J}(s)=\frac{\Gamma(\frac{s}{\alpha})}{\Gamma(\frac{1-s}{\alpha})}$ can be proved by the Mellin-Barnes integral representation of Bessel functions, see e.g.~\cite[Section 3.4.3]{Paris01}, which gives that
\begin{equation*}
\frac{1}{2\pi i}\int_{\frac12-i\infty}^{\frac12+i\infty}z^{-s}\frac{\Gamma(\frac{s}{\alpha})}{\Gamma(\frac{1-s}{\alpha})} ds = \alpha\sum_{n=0}^{\infty}\frac{(e^{i\pi}z^{\alpha})^n}{n!\Gamma(n+\frac1\alpha)}=\J(z).
\end{equation*}
This concludes the proof of the Proposition.
\end{proof}
Next, by referring to \cite[Chapter II]{Borodin-Salminen-02}, we see that the speed measure of $Q$ is (up to a multiplicative positive constant) the Lebesgue measure, hence $Q$ extends uniquely to a self-adjoint contractive $\mathcal{C}_0$-semigroup on $\Lp$, also denoted by $Q$ when there is no confusion (otherwise, we may denote $Q^F$ for the Feller semigroup). The infinitesimal generator $\mathbf{L}$ of this $\Lp$-extension is an unbounded self-adjoint operator on $\Lp$, and, by \cite[Remark 3.1]{McKean_56}, its $\Lp$-domain, denoted by $\D_{\mathbf{L}}(\Lp)$, is given by
\begin{equation} \label{eq:L2domain_bfL}
\D_{\mathbf{L}}(\Lp) = \{f\in\Lp;\: \mathbf{L}f\in\Lp, f^{+}(0)=0\}.
\end{equation}
Moreover, for any $t\geq 0$, $Q_t \in \B{\Lp}$ with $S(Q_t)=S_c(Q_t)=(e^{-q^{\alpha}t})_{q\geq 0}$ and $S_p(Q_t)=S_r(Q_t)=\emptyset$.  Finally, using the spectral expansion of the self-adjoint squared Bessel operator $K_t$, see e.g.~\cite[Section 6]{NS_Lp_contrac_Laguerre} and \cite{Muckenhoupt_Stein_65}, one can deduce that for any $t>0$ and $f\in \Lp$, $Q_tf$ has the following spectral expansion in $\Lp$,
\begin{eqnarray}\label{eq:spectral_expansion_Q_t}
Q_t f =  H_{\alpha} \e_{\alpha,t}H_{\alpha}f.
\end{eqnarray}
\bibliographystyle{plain}

\def\cprime{$'$}

\end{document}